\documentclass[dvipsnames]{amsart}
\usepackage[utf8]{inputenc}
\usepackage{graphicx}
\usepackage{amssymb}
\usepackage{amsmath,tensor}
\usepackage{float}
\usepackage{xcolor}
\usepackage[
backend=biber,
style=alphabetic,
%sorting=ynt
maxnames = 5
]{biblatex}
\usepackage{mathscinet}

\usepackage{hyperref}
\usepackage{cleveref}
\usepackage{comment}
\usepackage[all]{xy}
\usepackage{tikz-cd}

\usepackage[dvips]{epsfig}
\usepackage{pinlabel}
\newcommand{\ig}[2]{\vcenter{\xy (0,0)*{\includegraphics[scale=#1]{./EPS/#2}} \endxy}}

\addbibresource{sll.bib}

\newtheorem{thm}{Theorem}[section] 
\newtheorem{prop}[thm]{Proposition}
\newtheorem{lem}[thm]{Lemma}
\newtheorem{cor}[thm]{Corollary}

\theoremstyle{definition}
\newtheorem{defn}[thm]{Definition}

\newtheorem{ex}[thm]{Example}

\theoremstyle{remark}
\newtheorem{rem}[thm]{Remark}

% FONTS

\def\Z{\mathbb{Z}}

% GREEK

% GENERAL TEX SHORTCUTS

\def\hat{\widehat}

\newcommand{\co}{\colon}
\newcommand{\pa}{\partial}

\DeclareMathOperator{\Hom}{Hom}
\DeclareMathOperator{\End}{End}

\DeclareMathOperator{\id}{id}

\newcommand{\inv}{^{-1}}

% PROJECT SPECIFIC NOTATION

\newcommand{\at}{{\mathtt{a}}}

\newcommand{\AC}{\mathcal{AC}}
\newcommand{\AD}{\mathcal{AD}}

\newcommand{\mi}{\underline}
\newcommand{\ma}{\overline}
\newcommand{\expr}{\leftrightharpoons}

\newcommand{\Dem}{\mathcal{D}}

\newcommand{\SC}{\mathcal{SC}}
\newcommand{\mt}{\emptyset}

\DeclareMathOperator{\leftdes}{LD}
\DeclareMathOperator{\rightdes}{RD}
\DeclareMathOperator{\leftred}{LR}
\DeclareMathOperator{\rightred}{RR}
\DeclareMathOperator{\core}{core}

\DeclareMathOperator{\nilcox}{LNC}

\newcommand{\squish}{\mathbb{S}q}
\newcommand{\calS}{\mathcal{S}}
%%%%%%%%%%%%%colored letters$$$$

%\newcommand{\revise}[1]{{\color{red} #1}}
\newcommand{\revise}[1]{{ #1}}

\title{On reduced expressions for core double cosets}
%\author{Ben Elias, Hankyung Ko, Nicolas Libedinsky, Leonardo Patimo}
\author[Ben Elias]{Ben Elias}
\address{Department of Mathematics, Fenton Hall, University of Oregon,
Eugene, OR, 97403-1222, USA}
\email{belias@uoregon.edu}%\thanks{Blah blah}

\author[Hankyung Ko]{Hankyung Ko}
\address{Department of Mathematics, Uppsala University,
Box. 480,
SE-75106, Uppsala, Sweden}
\email{hankyung.ko@math.uu.se}

\author[Nicolas Libedinsky]{Nicolas Libedinsky}
\address{Departamento de Matemáticas, Facultad de Ciencias, Universidad de Chile}
\email{nlibedinsky@gmail.com}

\author[Leonardo Patimo]{Leonardo Patimo}
\address{Dipartimento di Matematica, Universit\`a di Pisa, Italy}
\email{leonardo.patimo@unipi.it}
%\date{April 2021}

\begin{document}

\begin{abstract}
The notion of a reduced expression for a double coset in a Coxeter group was introduced by Williamson, and recent work of Elias and Ko has made this theory more accessible and combinatorial. One result of Elias--Ko is that any coset admits a reduced expression which factors through a reduced expression for a related coset called its core. In this paper we define a class of cosets called atomic cosets, and prove that every core coset admits a reduced expression as a composition of atomic cosets. This leads to an algorithmic construction of a reduced expression for any coset. In types $A$ and $B$ we prove that the combinatorics of compositions of atomic cosets matches the combinatorics of ordinary expressions in a smaller group. In other types the combinatorics is new, as explored in a sequel by Ko.

\end{abstract}

\maketitle

\section{Introduction}

\subsection{Double cosets and reduced expressions}

Let $(W,S)$ be a Coxeter system, with simple reflections $\{s_i\}_{i \in S}$. For any subset $I \subset S$, one can consider the parabolic subgroup $W_I$ generated by $\{s_i\}_{i \in I}$. For $I, J \subset S$, double cosets in $W_I \backslash W / W_J$ are a subject of common study in algebra and combinatorics. We call them \emph{$(I,J)$-cosets}. In this paper we only treat the case when $W_I$ and $W_J$ are finite groups, in which case $I$ and $J$ are called \emph{finitary}. In this case, any double coset $p$ has a unique maximal element $\ma{p}$ and minimal element $\mi{p}$ with respect to the Bruhat order \revise{ (see \cite[Prop. 31]{Rea06}).} Let $w_I$ denote the longest element of $W_I$, and let $\ell(I)$ denote its length.

The concept of a reduced expression for a double coset was introduced in \cite{WillThesis}. Many equivalent and more user-friendly definitions were given in \cite{EKo}. A \emph{(multistep) $(I,J)$-expression} is a sequence of finitary subsets of the form 
\begin{equation} M_{\bullet} = [[I = I_0 \subset K_1 \supset I_1 \subset K_2 \supset \ldots \subset K_m \supset I_m = J]]. \end{equation}
(Note that $I_0 = K_1$ and other equalities are permitted.) This expression $M_{\bullet}$ is a reduced expression for $p$ if and only if
\begin{equation} \ma{p} = w_{K_1} w_{I_1}^{-1} w_{K_2} \cdots w_{K_m} \end{equation} 
and
\begin{equation} \ell(\ma{p}) = \ell(K_1) - \ell(I_1) + \ell(K_2) - \ldots + \ell(K_m). \end{equation}
If $M_{\bullet}$ is an $(I,J)$-expression and $N_{\bullet}$ is a $(J,K)$-expression, then the concatenation $M_{\bullet} \circ N_{\bullet}$ (defined in the intuitive way) is an $(I,K)$-expression.

Many properties of reduced expressions and tools for studying them were developed in \cite{EKo}. Of particular note are the \emph{(double coset) braid relations}, which are certain transformations between expressions, and the double coset Matsumoto theorem, which states that any two reduced expressions are related by braid relations.

What is absent from \cite{EKo} is a practical matter: guidance on how to construct reduced expressions for double cosets. This is what we aim to rectify in this paper, with the introduction of atomic expressions. We feel that atomic expressions will be an interesting new gadget for combinatorial study, a study which is begun here and continued in \cite{paper6}. 

\subsection{Core cosets and redundancy}

Let $p$ be an $(I,J)$-coset.

\begin{ex} In our running example, $W = \calS_{11}$ will be the symmetric group on $11$ letters, with parabolic subgroups $W_I = S_4 \times S_4 \times S_3$ and $W_J = S_2 \times S_5 \times S_1 \times S_2 \times S_1$. Then $p$ will be the following double coset.
\begin{equation} \ig{1}{examplecoset} \end{equation}
The string diagram represents a reduced expression for the minimal element $\mi{p}$, while the boxes represent $W_J$ (on the bottom) and $W_I$ (on the top).\end{ex}

The left and right redundancy sets of $p$ are defined and denoted as
\begin{equation} \leftred(p) = I \cap \mi{p} J \mi{p}^{-1}, \qquad \rightred(p) = \mi{p}^{-1} I \mi{p} \cap J. \end{equation}

\begin{rem}\label{LR=RR}%\HK{moved. The second claim in Lemma\ref{lem:coreiffminp} became duplicate so I commented it.}
 Conjugation by $\mi{p}$ induces an isomorphism  \[(W_{\leftred(p)},\leftred(p))\cong (W_{\rightred(p)},\rightred(p)) \]
of Coxeter systems.
\end{rem}

In type $A$, the redundancies represent the crossings of two consecutive strands which one could add either on top or on bottom of the diagram while staying within the same double coset. Let us clarify with an example.
 
\begin{ex} \label{ex:runningredunpic} In our running example, 
\[ \leftred(p) = \{s_2,s_3, s_6, s_{10}\}, \qquad \qquad  \rightred(p) = \{s_3, s_4, s_6, s_9\}.\] The following equality indicates  simultaneously why  $s_2 \in \leftred(p)$ and $s_3 \in \rightred(p)$.  
\begin{equation} s_2 \mi{p} = \ig{1}{examplecosetleftred} \; = \; \ig{1}{examplecosetrightred} \; = \mi{p} s_3 . \end{equation}
Note that $s_8 \notin J$, so even though $s_5 \mi{p} = \mi{p} s_8$, we have $s_5 \notin \leftred(p)$ and $s_8 \notin \rightred(p)$. \end{ex}

Let $K = \leftred(p)$ and $L = \rightred(p)$. There exists a unique $(K,L)$-coset $p^{\core}$ with $\mi{p^{\core}} = \mi{p}$. We call this the \emph{core} of $p$. Note that $K = \leftred(p^{\core})$ and $L = \rightred(p^{\core})$. More generally, an $(I,J)$-coset $q$ is a \emph{core coset} if $I = \leftred(q)$ and $J = \rightred(q)$.

\begin{ex} Here is a picture of $p^{\core}$ in our running example. The red lines indicate simple reflections which were removed from $I$ or $J$ to reach the redundancy.
\begin{equation} \ig{1}{examplecore} \end{equation} \end{ex}

A useful result from \cite{EKo} is the statement that $p$ has a reduced expression which factors through a reduced expression for $p^{\core}$. Namely, if $M_{\bullet}$ is a reduced expression for $p^{\core}$, then
\begin{equation} [[I \supset K]] \circ M_{\bullet} \circ [[L \subset J]] \end{equation}
is a reduced expression for $p$. This reduces the problem of constructing reduced expressions to the case of core cosets.

Note that $I$ and $J$ need not have the same size for general double cosets, but they do for core cosets, \revise{as they are conjugate}.

\begin{rem} There are many other reduced expressions of $p$ which do not factor through $p^{\core}$. They are of practical and combinatorial interest, but we do not discuss them in this paper. \end{rem}

\begin{rem} Reduced expressions for $p$ which factor through the core play a particularly important role in the theory of singular Soergel bimodules and their relationship to standard bimodules, and consequently they play a crucial role in the algorithm which constructs a basis for morphism spaces between singular Soergel bimodules. See \cite{KELP4} for more details. \end{rem}

\subsection{Atomic cosets and atomic expressions}

We now introduce the main new concepts in this paper.

Let $M \subset S$ be finitary and $s \in M$, and let $t = w_M s w_M$. Let $I = \hat{s} := M \setminus s$ and $J = \hat{t} := M \setminus t$. Let $p$ be the $(I,J)$-coset containing $w_M$. Then $p$ is a core coset. Any coset of this form will be called an \emph{atomic coset}.

\begin{ex} \label{ex:typeAatomic} Here are two prototypical examples of atomic cosets in type $A$. \begin{equation} \ig{1}{exampleatomic} \qquad \qquad \qquad \ig{1}{exampleatomic2} \end{equation}
\revise{In each example there are two blocks which cross each other. If $I = M \setminus s$ and $J = M \setminus t$ as above, then $s$ is the crossing between these two blocks on top, and $t$ the crossing between them on bottom, so that in $M$ the two blocks are merged. In the first example, $s = s_4$ and $t = s_2$.}
\end{ex}

It is proven in \cite[Proposition 5.14]{EKo} that an atomic coset has a unique reduced expression, namely $[[I \subset M \supset J]]$, which we shorten to $[I, M, J]$ or $[I + s - t]$, using the different notations introduced in \cite{EKo}.  A \emph{reduced atomic expression} is a reduced expression which is a composition of reduced expressions for atomic cosets.

\begin{thm} \label{corehasatomic} Any core coset has a reduced atomic expression. \end{thm}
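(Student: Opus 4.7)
The plan is to proceed by induction on $\ell(\ma p)$, peeling an atomic coset off the top of $p$ at each step. The base case is when $p$ is the trivial $(I,I)$-coset (so $I = J$, as forced by the core condition), in which case the expression $[[I]]$ serves as a reduced atomic expression, being the empty concatenation of atomic pieces.

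For the inductive step, I would first look for a simple reflection $s$ from which to build an atomic ``top piece''. Since $\ma p$ is maximal in its $(I,J)$-coset we always have $I \subseteq \leftdes(\ma p)$. The key task is to locate a simple reflection $s \in \leftdes(\ma p) \setminus I$ such that $M := I \cup \{s\}$ is finitary. Given such $s$, the inclusion $M \subseteq \leftdes(\ma p)$ lets us write $\ma p = w_M \cdot v$ with $\ell(\ma p) = \ell(M) + \ell(v)$. Setting $t := w_M s w_M$ and $J_q := M \setminus t$, the $(I, J_q)$-coset $q$ containing $w_M$ is an atomic coset with reduced expression $[[I \subset M \supset J_q]]$.

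I would then define the remainder $r$ to be the $(J_q, J)$-coset containing $w_{J_q} v$ and check that $\ma r = w_{J_q} v$. The factorization $\ma p = w_M \cdot (w_{J_q}^{-1} \ma r)$ together with the length identity $\ell(\ma p) = \ell(M) - \ell(J_q) + \ell(\ma r)$ shows that $[[I \subset M \supset J_q]] \circ N_\bullet$ is a reduced $(I,J)$-expression for $p$ for any reduced expression $N_\bullet$ of $r$. Since $\ell(\ma r) < \ell(\ma p)$, induction applies to $r$ once one verifies that $r$ is itself a core coset; this amounts to computing $\leftred(r)$ and $\rightred(r)$ from the construction and invoking the core hypothesis on $p$ together with Remark \ref{LR=RR}.

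The main obstacle is the existence of $s$: producing a simple reflection $s \in \leftdes(\ma p) \setminus I$ with $I \cup \{s\}$ finitary. The core hypothesis $I = \leftred(p)$ is exactly what is needed, since it forces enough structural interaction between the simple reflections of $I$ and the minimal element $\mi p$ to guarantee an eligible $s$ adjacent to $I$ in the Coxeter diagram; the difficulty is that finitariness of $I \cup \{s\}$ is a nontrivial global condition, and a careful combinatorial analysis using $\mi p J \mi p^{-1} \supseteq I$ should supply what is needed. A secondary difficulty is the verification that the remainder $r$ is a core coset rather than merely a $(J_q, J)$-coset, which requires tracking how left and right redundancies transform across the atomic decomposition. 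Once these two points are settled, induction on $r$ combined with concatenation of reduced expressions yields the theorem.
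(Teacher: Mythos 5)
Your skeleton coincides with the paper's (Theorem \ref{thm:corestartswithatomic} iterated via Corollary \ref{cor:coresplitsintoatomic}): peel an atomic coset $[I+s-t]$ off the left and induct on $\ell(\ma{p})$. However, the two steps you defer are exactly where the content lies, and your diagnosis of the difficulty is misplaced. Finitariness of $Is$ is not an obstacle: the left descent set of any element of any Coxeter group is finitary, and $I\cup\{s\}\subseteq\leftdes(\ma{p})$, so no global combinatorial analysis is needed. What does require an argument is that $\leftdes(\ma{p})\setminus I\neq\emptyset$ whenever $p$ is core and not an identity coset; your appeal to an eligible $s$ ``adjacent to $I$ in the Coxeter diagram'' is not correct as stated ($s$ need not be adjacent to $I$, e.g.\ when $I=\emptyset$), and the core hypothesis enters differently: the paper argues via Theorem \ref{thm:addremove} that if $\leftdes(p)\setminus I=\emptyset$ and $I\setminus\leftred(p)=\emptyset$, then no reduced expression can add or remove a simple reflection, so $[I]$ is the only reduced expression and $p$ must be the identity coset.

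More seriously, the assertion that the remainder is again a core coset is the crux of the whole induction, and you leave it at ``amounts to computing $\leftred(r)$ and $\rightred(r)$ \dots invoking \dots Remark \ref{LR=RR}.'' This does not follow from the definitions alone. The paper proves it (Proposition \ref{prop:splitcore}) using a genuinely nontrivial input from \cite{EKo}: the left redundancy sequence of a reduced expression is weakly decreasing, so $I=\leftred(p)\subseteq\leftred(\text{first factor})\subseteq I$, after which a size count (Lemma \ref{lem:samesizeeasyshowcore}) upgrades one-sided redundancy to coreness, and reversing the expression (Lemma \ref{lem:reverseword}) handles the remainder. Relatedly, your direct choice $t=w_Msw_M$ requires showing that removing this particular $t$ keeps the expression reduced and that $w_{J_q}v$ really is the maximal element of its coset; the paper sidesteps both by removing an arbitrary $t\in Is\setminus\leftred(n)$ (nonempty by a size count), invoking Proposition \ref{prop:splitcore} to see the resulting piece is core, and only then deducing $t=w_{Is}sw_{Is}$ from Lemma \ref{lem:atomic}. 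Until you supply the monotonicity-of-redundancy argument or a substitute, the induction does not close.
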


\begin{ex}\label{ex:squashing} A reduced atomic expression for $p^{\core}$ in our running example is 
\begin{equation} [I + s_8 - s_8 + s_9 - s_{10} + s_7 - s_6 + s_8 - s_8 + s_5 - s_5 + s_6 - s_7 + s_4 - s_2]. \end{equation} Here is the picture, with colors for visual emphasis. \begin{equation} \ig{1}{examplecoreatomicrex} \end{equation} In this picture, we drew the minimal elements in each atomic coset, and composed them to obtain the minimal element of $p^{\core}$. This is not how typical cosets compose, but it is how core cosets compose, see Lemma~\ref{lem:mimimi}. \end{ex}

In this paper we study atomic expressions, we develop some of the basic properties thereof, and we develop an algorithm to find them.

% We expect the combinatorics of atomic expressions to more closely resemble the combinatorics of ordinary reduced expressions than of double coset expressions.

\begin{ex} Let $I = J = \mt$. For any $w \in W$, the $(\mt,\mt)$-coset $\{w\}$ is a core coset. The atomic $(\mt,\mt)$-cosets are $\{s\}$, where $s \in S$ is a simple reflection. The unique reduced expression for $\{s\}$ is $[[\mt \subset \{s\} \supset \mt]]$, shortened to $[\mt,s,\mt]$. There is a bijection between ordinary reduced expressions $s_1 s_2 \ldots s_d$ for $w$, and atomic reduced expressions $[\mt,s_1,\mt,s_2,\mt, \cdots, s_d, \mt]$ for $\{w\}$. \end{ex}

\begin{ex} An important class of atomic expressions has already appeared in \cite{EKo}, though we did not think of them as atomic expressions at the time. They appear within the \emph{switchback relation}, which is one of the aforementioned braid relations. Suppose $I$ is finitary and $s, t \in I$ (possibly equal), but $t \ne w_I s w_I$. Then $[[\hat{s} \subset I \supset \hat{t}]]$ is reduced, but not atomic or core. The switchback relation gives another reduced expression for the same double coset, having the form
\begin{equation} [[\hat{s} \supset K]] \circ A_{\bullet} \circ [[L \subset \hat{t}]]. \end{equation}
Here $A_{\bullet}$ is an atomic expression for the core coset. For an explicit description of $A_{\bullet}$ in each case, see \cite[Section 6]{EKo}.
\end{ex}

\begin{rem} A core coset can have many reduced expressions which are not atomic expressions. Again, these are of interest, but we do not focus on them in this paper. \end{rem}

\begin{rem} Theorem \ref{corehasatomic} will be used to enable a polynomial forcing result for singular Soergel bimodules in future work. \end{rem}

\subsection{Atomic reduced expressions in type \texorpdfstring{$A$}{A}}

From the example above, the reader may already see how to construct an atomic expression for a core coset in type $A$. For a core coset $p$, the strands within each irreducible component of the redundancy are bundled together and never leave each others' side (they have the same color in the picture above). Consequently, one can squash them into a single strand, to obtain a permutation $w$ in a smaller symmetric group.

\begin{thm} Let $(W,S)$ have type $A_{n-1}$ so that $W \cong \calS_n$. Let $J \subset S$ have size $m$. Then there is a bijection $\squish^J$ between core $(I,J)$-cosets (as $I$ varies) and permutations in $\calS_{n-m}$. For a core $(I,J)$-coset $p$, $\squish^J$ induces a bijection between atomic reduced expressions for $p$ and ordinary reduced expressions for $\squish^J(p)$. \end{thm}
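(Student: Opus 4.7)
The plan is to work in the concrete combinatorial model in which a finitary subset $J \subset S$ corresponds to an ordered partition of $\{1,\ldots,n\}$ into consecutive intervals (the \emph{$J$-blocks}), with $n - m$ blocks when $|J| = m$. For a core $(I,J)$-coset $p$, Remark~\ref{LR=RR} guarantees that $I$ and $J$ have the same multiset of block sizes, and the condition $\rightred(p) = J$ forces $\mi{p}$ to act on each $J$-block as an order-preserving bijection onto some $I$-block. Thus $\mi{p}$ descends to a permutation $\squish^J(p) \in \calS_{n-m}$ on block labels. I would verify that this map is a bijection by exhibiting the inverse: given $\sigma \in \calS_{n-m}$, one rearranges the $J$-blocks in the order prescribed by $\sigma$ to obtain $I$, and the unique rigid-block permutation realizing $\sigma$ is $\mi{p}$; the core condition $\leftred(p) = I$ is automatic.

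To establish the bijection on expressions, I would use Example~\ref{ex:typeAatomic} to observe that an atomic $(I', J')$-coset in type $A$ is precisely a ``swap two adjacent blocks'' operation, where the two blocks being swapped are merged in the partition for $M$. Under squashing, such an atomic piece maps to a single simple transposition $s_j \in \calS_{n-m}$, with $j$ recording which pair of adjacent block labels is swapped; crucially, $j$ is independent of the sizes of the two blocks being swapped. Hence an atomic expression $A_\bullet$ for $p$ squashes to an ordinary word $s_{i_1} s_{i_2} \cdots s_{i_d}$ in $\calS_{n-m}$. By Lemma~\ref{lem:mimimi}, which states that the minimal element of a composition of core cosets is the product of the component minimal elements, the squashed word evaluates to $\squish^J(p)$. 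The inverse construction takes an ordinary expression $s_{i_1} \cdots s_{i_d}$ for $\squish^J(p)$ and builds the atomic expression by tracking the evolving block partition: starting from $J$, each letter $s_{i_j}$ specifies a unique adjacent block swap, which in turn determines the finitary subsets $(I_{j-1}, K_j)$ uniquely.

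Matching reducedness across the bijection reduces to a length calculation. Using the core coset identity $\ell(\ma{p}) = \ell(\mi{p}) + \ell(w_I)$ and the fact that swapping two blocks of sizes $a$ and $b$ contributes $ab$ inversions to $\mi{p}$, the multistep reducedness condition $\ell(\ma{p}) = \sum_i \ell(K_i) - \sum_i \ell(I_i^{\mathrm{int}})$ telescopes into the condition $d = \ell(\squish^J(p))$ on the squashed side. Conceptually, both conditions assert that the prescribed sequence of block swaps creates no unnecessary inversions in $\calS_n$ or $\calS_{n-m}$ respectively.

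The main technical obstacle I anticipate is precisely this length bookkeeping: each atomic piece contributes $\ell(K_i) - \ell(I_i^{\mathrm{int}})$ to the total rather than a uniform $1$, so one must carefully telescope the alternating sum and match it to $\ell(\ma{p})$. The saving grace is that block sizes are preserved under every atomic swap (blocks are merely reordered), so the size data needed to compute each atomic length contribution is determined by the current position in the expression, and the identification then becomes a purely combinatorial comparison with the inversion count of the squashed word.
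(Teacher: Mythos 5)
Your proposal is correct and follows essentially the same route as the paper: realizing $\mi{p}$ as a block permutation (Proposition~\ref{[j,k]s}, Theorem~\ref{thm.psi}), identifying atomic cosets with adjacent block swaps and hence with simple transpositions of $\calS_{n-m}$ (Proposition~\ref{prop:atomictosimple}), and using Lemma~\ref{lem:mimimi} to transport minimal elements and reduced compositions. The only cosmetic difference is that you verify reducedness by telescoping the alternating length sum of the multistep expression, whereas the paper packages the same computation into Proposition~\ref{monoidmorphism?}; both rest on the fact that a block permutation has length $\sum |C_c||C_{c'}|$ summed over the inversions of its squash.
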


% \NL{I think that we should put this as a theorem.} The ordinary reduced expressions for $w$ are in bijection with the atomic expressions for $p$. We call this bijection $\squish$. 

\begin{ex}\label{ex:squash} Here is the result of squashing in our running example.

\begin{equation} \squish \co \quad \ig{1}{examplecoreatomicrex} \quad \mapsto \quad \ig{1}{examplesquashed} \end{equation} \end{ex}

We hope the construction of $\squish$ is clear from the example above. For the proof and the details see Theorem~\ref{thm.psi}.  A similar squashing result holds in type $B$, see Section~\ref{s.typeB}.  We explicitly describe how every ordinary braid relation corresponds under $\squish^{-1}$ to a composition of several double coset braid relations between atomic expressions (which pass through non-atomic expressions en route). See Theorem~\ref{thm.atomicMatsumoto} and the preceding discussion for details.

% To a permutation $w$ we can associate a reduced expression graph, whose vertices are reduced expressions, and whose edges are braid relations.

This result is positive in the sense that it gives a complete understanding of atomic reduced expressions in types $A$ and $B$, in terms of existing combinatorics. 
This is extremely useful for practical purposes, such as finding reduced expressions of double cosets. 
Once the right perspective is in place, the result is relatively straightforward, but before this paper it was not known how to efficiently find reduced expressions for double cosets, nor was there any expectation that it would be easy.

The result is negative for combinatorial thrill-seekers, in that no new combinatorics arises in types $A$ and $B$. However, the case of a general Coxeter group is quite different. That is, the set of atomic expressions for a core coset is not related to reduced expressions for some other Coxeter group, but gives rise to a new combinatorial structure. This is studied in the follow-up paper \cite{paper6}.

\subsection{Non-reduced atomic expressions}

In ordinary Coxeter theory there are many variants of the group algebra of $W$, which go under the name of \emph{generalized Hecke algebras}. All of them have a basis parametrized by $W$. These include among others the Iwahori-Hecke algebra, the nilHecke algebra, and the $0$-Hecke algebra which is the linearization of the $*$-monoid. These agree in how they treat reduced expressions, but disagree on how to interpret non-reduced expressions. Similarly, there are multiple categories one can build whose objects are $I \subset S$ finitary, and where $\Hom(J,I)$ has a basis parametrized by $(I,J)$-cosets. For the discussion below we focus on $\SC$ (Section \ref{s:singmon}),
 which corresponds to the $*$-monoid, and on $\Dem$, which corresponds to the nilHecke algebra, see \cite{paper1} for more details.

It is interesting to consider the non-full subcategory $\AC \subset \SC$ or $\AD \subset \Dem$ generated by atomic cosets, and ask for a presentation thereof. One way of phrasing our results from the previous section is that, in types $A$ and $B$, there are braid relations in $\AC$ and $\AD$, which become the ordinary braid relations after squashing.

A nice feature of the nilHecke algebra, or its double coset analogue $\Dem$, is that any non-reduced expression is equal to zero. This property is inherited by $\AD$. Therefore, in type $A$, squashing induces an equivalence of categories between $\AD$ and a nilHecke algebroid. See \Cref{cor.AC0} for a precise statement. A similar theorem holds in type $B$.

In the follow-up paper \cite{paper6}, the second author gives a presentation for $\AD$ for any Coxeter type. The same problem for $\AC$ seems more difficult and is open. See \cite[Section 6]{paper6} for some atomic non-braid relations in $\AC$ and more discussion.

\subsection{Background reference}

The background section found in \cite[Chapter 2]{paper1} suffices for all the needs of this paper. We refer the reader to that paper for basic material on double cosets, expressions for double cosets, and basic notation.

\revise{\subsection{Organization}
In \Cref{sec:atomic} we introduce and study core cosets and atomic cosets in general for any Coxeter group. The main results of this section are \Cref{thm:corestartswithatomic} and \Cref{cor:coresplitsintoatomic}, which state that any core coset admits an atomic reduced expression. In \Cref{subsec:atomicsubcats} we introduce the atomic subcategories $\AC$ and $\AD$. In \Cref{sec:typeA} we focus on type $A$, where the combinatorics of core cosets can be related via squashing, defined in \Cref{s:w(p)}, to the combinatorics of a smaller symmetric group (\Cref{thm.atomicrex}). Applying this, we prove an atomic version of the Matsumoto theorem (\Cref{thm.atomicMatsumoto}) and give a presentation of the atomic category $\AD$. In \Cref{s.typeB} we prove the analogous results in type $B$ (\Cref{thm:atomicrexB} and \Cref{prop:ADpresentationB}).}

\begin{comment}
We however remind the reader of the star product of double cosets and dot notations.
For an $(I,J)$-coset $p$ and $(J,K)$-coset $p$ the $(I,K)$-coset $p*q$ is defined as
\[p*q = W_I\ma{p}*\ma{q}W_J,\]
where the second $*$ denotes the product in the Coxeter monoid $(W,*)$.
For $x, y, z \in W$, $z = x.y$ means that $z = xy$ and $\ell(z) = \ell(x) + \ell(y)$, in which case we also have $z=x*y$. 
A similar notation applies to double cosets.
\end{comment}

 \section*{Acknowledgements}  BE was supported by NSF grants DMS-2201387 and DMS-2038316. HK was partially supported by the Swedish Research Council. NL was partially supported by FONDECYT-ANID grant 1230247. \revise{We thank the referee for helpful comments.}

\section{Atomic cosets and expressions}\label{sec:atomic}

\subsection{Coxeter monoids}\label{s:singmon}

We recall the regular and singular Coxeter monoids. See \cite[Section 1.3, Section 2]{EKo} for details.

Let $(W,S)$ be a Coxeter system. The \emph{Coxeter monoid} $(W,S,*)$ (also known as the \emph{0-Hecke monoid} or the \emph{star monoid}) is the monoid generated by $S$ with relations 
\begin{subequations}
\begin{equation} \label{starquad} s*s = s \text{ for each } s \in S, \end{equation}
\begin{equation} \label{starbraid} \underbrace{s * t * \cdots}_{m} = \underbrace{t * s * \cdots}_{m}, \text{ for each } s, t \in S \text{ with } m = m_{st} < \infty. \end{equation} 
\end{subequations}
The relation \eqref{starquad} is called the \emph{$*$-quadratic relation}, and \eqref{starbraid} is the \emph{braid relation}.

If $s*t*\cdots *u$ is a reduced expression in $(W,*)$, then $st\cdots u$ is a reduced expression in the Coxeter group $W$, and we identify the element $w=s*t*\cdots *u\in (W,*)$ with the element $w=st\cdots u\in W$. More generally, if $\ell(xy)=\ell(x)+\ell(y)$, where $\ell$ denotes the Coxeter length, i.e., the length of a reduced expression, then we both write $x*y=x.y$ in $(W,*)$ and $xy=x.y$ in $W$, and we call this a \emph{reduced composition}.

The \emph{singular Coxeter monoid} is the category $\SC=\SC(W,S)$ defined as follows. Its objects are the finitary subsets $I \subset S$. Morphisms from $J$ to $I$ are identified with the set of $(I,J)$-cosets, i.e.
\[ \Hom_\SC(J,I) = W_I \backslash W/W_J. \]
Composition $*$ of cosets is defined so that $\ma{p*q} = \ma{p}*\ma{q}$. That is, for $p\in W_I \backslash W/W_J$ and $q\in W_J \backslash W/W_K$ we have
\[p*q = W_I (\ma{p}*\ma{q}) W_K.\]
 We say that $p*q$ is reduced if \[ (\ma{p}w_J^{-1}).\ma{q}=\ma{p}*\ma{q} = \ma{p}.(w_J^{-1} \ma{q}).\]
In this case we write $p.q=p*q$. Even though $w_J$ is an involution, we write $w_J^{-1}$ to emphasize that this product lowers the length accordingly.

\revise{Throughout this paper we write $Is$ to denote $I \cup \{s\}$, when $s \in S \setminus I$ and $Is$ is finitary.}

% whose
% \begin{itemize}
%     \item  objects are the finitary subsets $I\subset S$;
%     \item morphisms, from $J$ to $I$, are the $(I,J)$-cosets, i.e., $\Hom_\SC(J,I) = W_I \backslash W/W_J$;
%     \item composition `$*$', of $p\in W_I \backslash W/W_J$ and $q\in W_J \backslash W/W_K$, is given by 
%     \[p*q = W_I (\ma{p}*\ma{q}) W_K,\]
%     for each $I,J,K\in \operatorname{Ob}\SC$.
% \end{itemize}

%The category $\SC$ is given a presentation by generators and relations in \cite{EKo}. Refer to the section in Paper 1.

%=====================
\subsection{Reduced expressions, descent sets, and redundancy}
%=====================

 For a double coset $p$, let $\leftdes(p)$ and $\rightdes(p)$, the left and right descent sets of $p$,
denote the (ordinary) left and right descent sets of $\overline{p}$. Because we use it often, let us recall and rephrase a result from \cite{EKo}. 
\begin{thm} \label{thm:addremove} Let $p$ be an $(I,J)$ coset, and $s \in S$. There is a reduced expression for $p$ \dots \begin{itemize}
\item \dots of the form $[I,Is,\ldots,J]\phantom{J \setminus s}$ if and only if $s \in \leftdes(p)\setminus I$.
\item \dots of the form $[I,I \setminus s, \ldots, J]\phantom{Js}$ if and only if $s\in I\setminus\leftred(p)$.
\item \dots of the form $[I, \ldots, Js,J]\phantom{I \setminus s}$ if and only if $s \in \rightdes(p)\setminus J$.
\item \dots of the form $[I, \ldots, J\setminus s, J]\phantom{Is}$ if and only if $s \in  J\setminus  \rightred(p)$.
\end{itemize}
\end{thm}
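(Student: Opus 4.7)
The four items split by left-right duality under the inversion $p \mapsto p^{-1}$, which swaps $I$ and $J$ and reverses reduced expressions; so items (3) and (4) reduce to items (1) and (2). The remaining items concern the two types of first step in a reduced expression—going up to $Is$ versus going down to $I \setminus s$—and require separate arguments.

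For item (1), the forward direction unpacks the definition: if $[I, Is, \ldots, J]$ is a reduced expression for $p$, the formula $\ma{p} = w_{Is} \cdot (\text{rest})$ together with the length condition gives $w_{Is} \leq \ma{p}$ in Bruhat order, so $Is \subseteq \leftdes(p)$ and hence $s \in \leftdes(p) \setminus I$. Conversely, given $s \in \leftdes(p) \setminus I$, consider the $(Is, J)$-coset $q$ containing $\ma{p}$. Since $\ma{p}$ has $Is \subseteq \leftdes(\ma{p})$ (the hypothesis combined with $I \subseteq \leftdes(\ma{p})$) and $J \subseteq \rightdes(\ma{p})$, and the maximum of any double coset is uniquely characterized by such descent containments, we have $\ma{q} = \ma{p}$. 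Prepending the reduced $(I, Is)$-expression $[[I \subset Is \supset Is]]$ (with max $w_{Is}$ and length $\ell(Is)$) to any reduced expression $N_\bullet$ for $q$ then produces a reduced expression for $p$ of the desired form; the max formula telescopes as $w_{Is} \cdot w_{Is}^{-1} \cdot \ma{q} = \ma{q} = \ma{p}$, and the length as $\ell(Is) - \ell(Is) + \ell(\ma{q}) = \ell(\ma{p})$.

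For item (2), let $r = W_{I \setminus s}\, \mi{p}\, W_J \subseteq p$; one checks $\mi{r} = \mi{p}$ since $\mi{p}$ has no left descents in $I \setminus s$ and no right descents in $J$. The essential input is the length identity
\[ \ell(\ma{p}) - \ell(\ma{r}) = \ell(w_I) - \ell(w_{I \setminus s}) \iff s \notin \leftred(p), \]
which follows from the general formula $\ell(\ma{q}) - \ell(\mi{q}) = \ell(w_I) + \ell(w_J) - \ell(w_{\leftred(q)})$ combined with the direct computation $\leftred(r) = \leftred(p) \cap (I \setminus s)$, which equals $\leftred(p)$ exactly when $s \notin \leftred(p)$. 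Given $s \in I \setminus \leftred(p)$, prepending $[[I \subset I \supset I \setminus s]]$ (a reduced expression for $W_I$ as an $(I, I \setminus s)$-coset, with max $w_I$ and length $\ell(I)$) to any reduced expression for $r$ yields a reduced $(I, J)$-expression of the form $[I, I \setminus s, \ldots, J]$: the max formula gives $w_I \cdot w_{I\setminus s}^{-1} \cdot \ma{r} = w_I * \ma{r} = \ma{p}$, and the length identity is precisely the reducedness condition. Conversely, any reduced expression of this form determines $\ma{q'} = w_{I\setminus s} w_I \ma{p}$ uniquely, which must coincide with $\ma{r}$ by the length constraint, so the length identity then forces $s \notin \leftred(p)$. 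The principal technical step is the length formula for $\ell(\ma{q})$, which can be proven by analyzing the Bruhat interval structure of a double coset or cited from standard references.
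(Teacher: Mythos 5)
The paper does not actually prove this theorem: its ``proof'' is a pointer to \cite[Sections 4.8 and 4.9]{EKo}, so your from-scratch argument is necessarily a different route. Your overall architecture is sound: the reduction of items (3)--(4) to (1)--(2) via $p \mapsto p^{-1}$ is legitimate (inversion swaps left/right descents and redundancies and reverses expressions), and both remaining items are correctly driven by Howlett's lemma --- the normal form $\ma{q} = w_I.\mi{q}.(w_L w_J)$ and its length consequence $\ell(\ma{q}) = \ell(\mi{q}) + \ell(w_I) + \ell(w_J) - \ell(w_{\leftred(q)})$ --- which is exactly the input \cite{EKo} uses (cited in this paper as Lemma 2.12). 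Your computation $\leftred(r) = \leftred(p) \cap (I \setminus s)$ and the resulting length dichotomy for item (2) is the right mechanism.

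Two steps need repair, though neither is fatal. First, in item (1) you infer $Is \subseteq \leftdes(\ma{p})$ from ``$w_{Is} \le \ma{p}$ in Bruhat order''; that implication is false (e.g.\ $s \le ts$ but $s \notin \leftdes(ts)$). What reducedness actually gives you is the length-additive factorization $\ma{p} = w_{Is}.x$, and \emph{that} yields the descent containment; you should say this, and it requires the (standard, but nontrivial) fact that a reduced multistep expression evaluates by reduced compositions from the left. Relatedly, the converse of (1) rests entirely on the assertion that $\ma{q}$ is the \emph{unique} element of its $(Is,J)$-coset with $Is \subseteq \leftdes$ and $J \subseteq \rightdes$; this is true (Kilmoyer/Howlett) but it is the crux, not a throwaway, and deserves a citation or a proof via the normal form. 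Second, in the converse of item (2) you claim the coset $q'$ cut out by the reduced expression ``must coincide with $r$ by the length constraint.'' The length constraint alone does not identify which $(I\setminus s, J)$-subcoset of $p$ you have landed in; you need to observe that $\ma{q'} = w_{I\setminus s}w_I\ma{p} = w_{I\setminus s}.\mi{p}.(w_Lw_J)$ lies in $W_{I\setminus s}\mi{p}W_J = r$, again via the Howlett decomposition, after which $q' = r$ and your length identity closes the argument. With these two points patched, the proof is complete.
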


\begin{proof} This is a conglomeration of the results from \cite[Sections 4.8 and 4.9]{EKo}, see also \cite[Theorems 2.20 and 2.22]{paper1}. \end{proof}

%=====================
\subsection{Core cosets}
%=====================

\begin{defn}\label{core} Let $p$ be an $(I,J)$-coset and set $K=\leftred(p)$ and $L = \rightred(p)$. The \emph{core} of $p$ is the $(K,L)$-coset $p^{\core}$ with minimal element $\mi{p^{\core}}=\mi{p}$. \end{defn}

\begin{defn} Let $p$ be an $(I,J)$-coset. We call $p$ a \emph{core coset} if $\leftred(p) = I$ and $\rightred(p) = J$, or equivalently if $I = \mi{p} J \mi{p}^{-1}$. \end{defn}

\begin{lem}\label{coreiscore} If $p$ is any double coset, then $p^{\core}$ is a core coset. If $p$ is a core coset, then $p = p^{\core}$. \end{lem}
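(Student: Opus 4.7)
The plan is to prove both claims by direct computation from the definitions of $\leftred$, $\rightred$, and $p^{\core}$, together with the fact that conjugation distributes over intersection.

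First I would verify that $p^{\core}$ is well-defined, i.e.\ that $\mi{p}$ really is the minimal element of some $(K,L)$-coset where $K = \leftred(p)$ and $L = \rightred(p)$. This amounts to checking that $\ell(k \mi{p}) > \ell(\mi{p})$ for every $k \in K$ and $\ell(\mi{p} l) > \ell(\mi{p})$ for every $l \in L$. But $K \subset I$ and $L \subset J$, and these inequalities already hold for all $s \in I$ and all $t \in J$ because $\mi{p}$ is the minimal element of the $(I,J)$-coset $p$. So $W_K \mi{p} W_L$ has $\mi{p}$ as its minimal element and we may define $p^{\core}$ to be this coset.

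For the first claim, I would compute
\[ \leftred(p^{\core}) = K \cap \mi{p^{\core}} L \mi{p^{\core}}^{-1} = K \cap \mi{p} L \mi{p}^{-1}. \]
Substituting $L = \mi{p}^{-1} I \mi{p} \cap J$ and using that conjugation by $\mi{p}$ is a bijection (hence distributes over intersection) gives
\[ \mi{p} L \mi{p}^{-1} = (I \cap \mi{p} J \mi{p}^{-1}) = K. \]
Here the only subtle point to check is that the ``intersection'' $I \cap \mi{p} J \mi{p}^{-1}$ behaves correctly even though $\mi{p} J \mi{p}^{-1}$ is not a priori a subset of $S$: the intersection is taken inside $W$, and it picks out those elements of $\mi{p} J \mi{p}^{-1}$ which happen to lie in $I \subset S$, exactly matching the definition of $\leftred(p)$. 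Thus $\leftred(p^{\core}) = K \cap K = K$, and the computation for $\rightred(p^{\core}) = L$ is completely symmetric, using $\mi{p}^{-1} K \mi{p} = \mi{p}^{-1}(I \cap \mi{p} J \mi{p}^{-1})\mi{p} = \mi{p}^{-1} I \mi{p} \cap J = L$.

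For the second claim, if $p$ is already a core coset then $\leftred(p) = I$ and $\rightred(p) = J$ by definition, so $K = I$ and $L = J$. Then $p^{\core}$ is by definition the $(I,J)$-coset whose minimal element is $\mi{p}$, which is exactly $p$. I do not anticipate any real obstacle here; the whole argument is bookkeeping with the definitions, and the only thing one has to pause on is the conjugation-distributes-over-intersection step, which is instant once one agrees that the intersection lives inside $W$.
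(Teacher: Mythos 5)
Your proof is correct, but it takes a different route from the paper: the paper simply cites \cite[Lemma 4.27]{EKo} for the first statement and declares the second immediate from the definitions, whereas you give a self-contained verification. Your argument has two ingredients, both sound. First, the well-definedness check: since $K = \leftred(p) \subset I$ and $L = \rightred(p) \subset J$, the element $\mi{p}$ has no left descent in $K$ and no right descent in $L$, hence is the minimal element of $W_K \mi{p} W_L$ by the standard characterization of minimal double coset representatives; this justifies $\mi{p^{\core}} = \mi{p}$ and is a point the paper glosses over when defining $p^{\core}$. Second, the conjugation bookkeeping: from $\mi{p}\,L\,\mi{p}^{-1} = \mi{p}\bigl(\mi{p}^{-1} I \mi{p} \cap J\bigr)\mi{p}^{-1} = I \cap \mi{p} J \mi{p}^{-1} = K$ you get $\leftred(p^{\core}) = K \cap \mi{p} L \mi{p}^{-1} = K$, and symmetrically $\rightred(p^{\core}) = L$; all intersections are taken inside $W$, exactly as in the definitions of $\leftred$ and $\rightred$, so nothing goes wrong with $\mi{p} J \mi{p}^{-1}$ not lying in $S$. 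What your approach buys is transparency: the statement is reduced to a purely set-theoretic identity about conjugation distributing over intersection, with no external input beyond the fact that ``no $K$-left-descent and no $L$-right-descent'' characterizes minimal representatives. What the paper's citation presumably buys is consistency with the surrounding machinery of \cite{EKo} (redundancy sequences, Howlett's lemma), which is used elsewhere anyway. Your treatment of the second claim matches the paper's: if $p$ is core then $K = I$, $L = J$, and the $(I,J)$-coset with minimal element $\mi{p}$ is $p$ itself.
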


\begin{proof} The first statement is a consequence of \cite[Lemma 4.27]{EKo}. The second statement follows from the definitions. \end{proof}

\begin{ex} Any $(\mt, \mt)$-coset is a core coset. \end{ex}

\begin{ex} The $(I,I)$-coset containing the identity is a core coset, for any $I$.  \end{ex}

\begin{ex} Let $W = S_3$ and $S = \{s,t\}$. Aside from the core cosets guaranteed by the previous examples, the only core cosets are the $(s,t)$-coset and the $(t,s)$-coset containing the longest element. \end{ex}

 As in earlier works, we write $p \expr I_{\bullet}$ whenever $I_{\bullet}$ is an expression for $p$, \revise{or $I_{\bullet} \expr I'_{\bullet}$ if they both express the same coset}.

\begin{ex}\label{ExS4} Let $W = S_4$ and $S = \{s,t,u\}$ with $m_{su} = 2$. Consider the $(s,u)$-coset $p$ with 
\begin{equation} \ma{p} = tstut, \quad \mi{p} = tsut, \qquad p \expr [s,st,t,tu,u]. \end{equation}
Consider the $(t,s)$-coset $q$ with
\begin{equation} \ma{q} = stsu, \quad \mi{q} = stu,  \qquad q \expr [t,st,s,su,s]. \end{equation}
These are both core cosets. \end{ex}

In the next few lemmas we give several basic properties of core cosets.

\begin{lem} \label{lem:coreiffminp} Let $p$ be an $(I,J)$-coset. Then $p$ is a core coset if and only if \begin{equation} \label{mapforcore} \ma{p} = w_I . \mi{p} = \mi{p} . w_J. \end{equation}
%Moreover, $I$ and $J$ have the same size if $p$ is a core coset.
\end{lem}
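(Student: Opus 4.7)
The plan is to reduce the equivalence to a single length computation, using as a black box the standard length formula for a double coset $q$:
\[ \ell(\ma{q}) - \ell(\mi{q}) = \ell(I) + \ell(J) - \ell(\leftred(q)) \]
(background material, see \cite{EKo} or the references in \cite[Chapter 2]{paper1}). Combined with two elementary facts---that $\mi{p}$ lies in ${}^I W \cap W^J$ (being a minimal double coset representative), and that the Bruhat maximum of $p$ is unique---both implications will fall out cleanly.

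For the forward direction, I would assume $p$ is a core coset, so $\leftred(p) = I$ and $\rightred(p) = J$; by Remark \ref{LR=RR} these are isomorphic Coxeter systems, so in particular $\ell(I) = \ell(J)$. The length formula then yields $\ell(\ma{p}) - \ell(\mi{p}) = \ell(I)$. Since $\mi{p} \in {}^I W$, the product $w_I \mi{p}$ is reduced, of length $\ell(I) + \ell(\mi{p}) = \ell(\ma{p})$. As $w_I \mi{p}$ lies in $p$ and matches the length of the unique Bruhat maximum, it equals $\ma{p}$, giving $\ma{p} = w_I . \mi{p}$. The equality $\ma{p} = \mi{p} . w_J$ follows by the symmetric argument on the right.

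For the reverse direction, assume $\ma{p} = w_I . \mi{p} = \mi{p} . w_J$ as reduced products. Reading off lengths gives $\ell(\ma{p}) - \ell(\mi{p}) = \ell(I) = \ell(J)$. Substituting into the length formula forces $\ell(\leftred(p)) = \ell(J) = \ell(I)$. Now $\leftred(p) \subseteq I$, and a proper subset $K \subsetneq I$ has strictly shorter longest element $\ell(w_K) < \ell(w_I)$, so $\ell(\leftred(p)) = \ell(I)$ forces $\leftred(p) = I$. By Remark \ref{LR=RR} we also have $\ell(\rightred(p)) = \ell(\leftred(p)) = \ell(J)$, and since $\rightred(p) \subseteq J$, the same parabolic-length argument yields $\rightred(p) = J$. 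Hence $p$ is a core coset.

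The main obstacle is just recognizing that the length formula is the right tool---once it is in hand, the implications reduce to bookkeeping with lengths and the rigidity fact that the longest element length strictly monotonically increases as one enlarges a finitary subset. No descent-set manipulation or conjugation-of-parabolics argument is needed.
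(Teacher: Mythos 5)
Your proof is correct and rests on the same key input as the paper, which simply cites Howlett's lemma \cite[Lemma 2.12(3)]{EKo}: your ``length formula'' $\ell(\ma{p})-\ell(\mi{p})=\ell(I)+\ell(J)-\ell(\leftred(p))$ is precisely the numerical content of that lemma, and your remaining steps (uniqueness of the Bruhat maximum, strict monotonicity of $\ell(w_K)$ in $K$, and $\ell(I)=\ell(J)$ via Remark~\ref{LR=RR}) are all sound. You have merely unpacked explicitly the deduction the paper declares ``immediate,'' so nothing further is needed.
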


\begin{proof} That equation \eqref{mapforcore} holds if and only if $\leftred(p) = I$ and $\rightred(p) = J$ is immediate from 
Howlett's lemma \cite[Lemma 2.12 (3)]{EKo}. %The finitary sets $I$ and $J$ must have the same size since they are conjugate under $\mi{p}$. 
\end{proof}

The following lemma is a crucial technical tool, and fails when $p$ and $q$ are not core cosets.

\begin{lem}\label{lem:mimimi}
    Let $p$ be a core $(I,J)$-coset and $q$ a core $(J,K)$-coset. Then $p*q$ is reduced if and only if $\mi{p} * \mi{q}$ is reduced. If so, then 
\begin{equation}\label{eq:mimimi}
    \mi{p*q}=\mi{p}.\mi{q}
\end{equation}
holds, and $p*q$ is core.
\end{lem}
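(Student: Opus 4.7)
My plan is to reduce the coset-reducedness condition to a concrete length identity via the core characterization of \Cref{lem:coreiffminp}, and then handle that identity using the Exchange condition.

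By \Cref{lem:coreiffminp}, $\ma{p} = \mi{p}.w_J$ and $\ma{q} = w_J.\mi{q}$. The defining property $\mi{p}J\mi{p}^{-1} = I$ of a core coset forces conjugation by $\mi{p}$ to carry the longest element $w_J$ to the longest element $w_I$, yielding the identity $\mi{p}w_J = w_I\mi{p}$, and symmetrically $w_J\mi{q} = \mi{q}w_K$; in particular $\ell(w_I)=\ell(w_J)$. Hence $\ma{p}w_J^{-1}=\mi{p}$, $w_J^{-1}\ma{q}=\mi{q}$, and the three elements whose equality is asserted in the definition of reducedness coincide: $\mi{p}\ma{q}=\ma{p}\mi{q}=\mi{p}w_J\mi{q}=w_I\mi{p}\mi{q}$. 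So ``$p*q$ is reduced'' unfolds to the single length identity
\[ \ell(w_I\mi{p}\mi{q}) \;=\; \ell(w_I)+\ell(\mi{p})+\ell(\mi{q}). \]
By the triangle inequality applied to the two pairings, this identity is equivalent to the conjunction of (a) $\mi{p}.\mi{q}$ reduced, i.e.\ $\ell(\mi{p}\mi{q})=\ell(\mi{p})+\ell(\mi{q})$, and (b) $w_I.(\mi{p}\mi{q})$ reduced, i.e.\ $\mi{p}\mi{q}$ has no left descent in $I$.

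The main technical step is then to show that (a) alone implies (b), so that the stated equivalence with $\mi{p}.\mi{q}$ reduced holds. I plan to argue by the Exchange property. Concatenating reduced words for $\mi{p}$ and $\mi{q}$ gives a reduced word $s_1\cdots s_a t_1\cdots t_b$ for $\mi{p}\mi{q}$. Suppose for contradiction that some $r\in I$ is a left descent of $\mi{p}\mi{q}$. Exchange removes one letter. Removing a letter from the $\mi{p}$-part yields $r\mi{p}<\mi{p}$, contradicting that $\mi{p}$ is minimal in $p$. Removing a letter from the $\mi{q}$-part, combined with the conjugation identity $r\mi{p}=\mi{p}s$ with $s=\mi{p}^{-1}r\mi{p}\in J$, gives after cancelling $\mi{p}$ on the left that $s\mi{q}$ equals a word of length $\ell(\mi{q})-1$; hence $s\in\leftdes(\mi{q})$, contradicting that $\mi{q}$ has no left descent in $J$. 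This Exchange-by-conjugation step is the crux of the argument.

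For the ``if so'' clause, a symmetric Exchange argument on the right (now using $\mi{q}w_K=w_J\mi{q}$ and $u\mi{q}=\mi{q}(\mi{q}^{-1}u\mi{q})$ for $u\in K$) shows that $\mi{p}\mi{q}$ has no right descent in $K$. Since $\mi{p}\mi{q}=w_I^{-1}\,\ma{p*q}$ lies in $p*q$ and has no left descent in $I$ or right descent in $K$, it is the minimal element, so $\mi{p*q}=\mi{p}.\mi{q}$. Coreness of $p*q$ is then immediate from
\[ \mi{p*q}\,K\,\mi{p*q}^{-1} \;=\; \mi{p}\mi{q}\,K\,\mi{q}^{-1}\mi{p}^{-1} \;=\; \mi{p}\,J\,\mi{p}^{-1} \;=\; I. \]
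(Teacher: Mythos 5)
Your proof is correct and takes essentially the same route as the paper's: both use Lemma~\ref{lem:coreiffminp} to unfold reducedness of $p*q$ into a length identity involving a longest element, and both apply the exchange property together with conjugation by a minimal coset representative to rule out the offending descent — you absorb $w_I$ on the left and chase a left descent in $I$ through $\mi{p}^{-1}I\mi{p}=J$, whereas the paper absorbs $w_K$ on the right and chases a right descent in $K$ through $\mi{q}K\mi{q}^{-1}=J$, a mirror image of the same argument. The concluding steps (minimality of $\mi{p}\mi{q}$ and coreness of $p*q$) likewise match.
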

\begin{proof}
 Recall that by definition $p * q$ is reduced if $(\ma{p} w_J^{-1})*\ma{q}$ is a reduced composition. By Lemma~\ref{lem:coreiffminp}, $\ma{p} w_J^{-1} = \mi{p}$ and $\ma{q} = \mi{q} . w_K$. Thus $p * q$ is reduced if and only if $\mi{p} * \mi{q} * w_K$ is a reduced composition. Certainly this requires the subword $\mi{p} * \mi{q}$ to be a reduced composition.

% If $\mi{p} * \mi{q}$ is not reduced, then $\mi{p} * (\mi{q} . w_K)=\mi{p} * (w_J.\mi{q})$ is not reduced, and by Lemma~\ref{lem:coreiffminp} $\mi{p}*\ma{q}$ is not reduced. The latter, by Lemma~\ref{lem:coreiffminp} again, implies that the composition $p*q$ is not reduced.

Now suppose $\mi{p} * \mi{q}$ is reduced, but $\mi{p}*\mi{q} * w_K$ is not. Then $\rightdes(\mi{p} * \mi{q})\cap K$ is nonempty.
Let $s\in \rightdes(\mi{p} * \mi{q})\cap K$. By the exchange property (see e.g., \cite[Theorem~1.4.3]{BjornerBrenti}), either the product $\mi{q}*s$ or  $\mi{p}*t$ is not reduced, where $t= \mi{q}s\mi{q}\inv\in \mi{q}K\mi{q}\inv = J$. This contradicts the minimality of $\mi{p}$ in $p$ or $\mi{q}$ in $q$. Thus $\mi{p} * \mi{q}$ being reduced implies that $\mi{p}*\mi{q} * w_K$ and  hence $p * q$ is reduced.

% It follows, using Lemma~\ref{lem:coreiffminp}, that
% \begin{equation} \mi{p}.\mi{q}.w_K = \mi{p}.\ma{q} = (\ma{p}w_J\inv).\ma{q}
% \end{equation}
% holds, which says $p*q$ is reduced.

Suppose $p*q$ is reduced. Since $\ma{p*q} = \ma{p}*\ma{q} = \mi{p}.\mi{q}.w_K$ as above, we deduce that $\mi{p}.\mi{q}$ and $\ma{p*q}$ are in the same $(I,K)$-coset $p*q$. We've already shown $\rightdes(\mi{p}.\mi{q}) \cap K$ is empty. An analogous argument proves $\leftdes(\mi{p}.\mi{q}) \cap I$ is empty. Thus $\mi{p}.\mi{q}$ is a minimal coset representative, and $\mi{p}.\mi{q} = \mi{p*q}$.

Thus $\ma{p*q} = \mi{p*q} . w_K$, implying by Lemma~\ref{lem:coreiffminp} that $p*q$ is core. \end{proof}

\begin{lem} \label{lem:samesizeeasyshowcore} Let $p$ be an $(I,J)$-coset, where $I$ and $J$ have the same size. If $\leftred(p) = I$ or $\rightred(p) = J$, then $p$ is a core coset. \end{lem}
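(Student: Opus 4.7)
The key observation is that \Cref{LR=RR} provides an isomorphism of Coxeter systems $(W_{\leftred(p)},\leftred(p)) \cong (W_{\rightred(p)},\rightred(p))$, which in particular forces
\[ |\leftred(p)| = |\rightred(p)|. \]
Since by definition $\leftred(p) \subset I$ and $\rightred(p) \subset J$, and by hypothesis $|I| = |J|$, the plan is to use a dimension-counting argument.

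Suppose first that $\leftred(p) = I$. Then $|\rightred(p)| = |\leftred(p)| = |I| = |J|$. Since $\rightred(p)$ is a subset of the finite set $J$ of the same cardinality, we must have $\rightred(p) = J$. The symmetric argument handles the case $\rightred(p) = J$: we get $|\leftred(p)| = |J| = |I|$, and since $\leftred(p) \subset I$, equality holds. In either case, both redundancies are maximal, so $p$ is core by definition.

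I don't anticipate any real obstacle here: the whole statement reduces immediately to the Coxeter-system isomorphism in \Cref{LR=RR} (which is just about cardinalities here) together with the trivial set-theoretic fact that a subset of a finite set having the same cardinality as the ambient set must equal it.
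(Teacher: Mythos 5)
Your proof is correct and follows the same route as the paper: the paper's (one-line) argument also deduces that $\rightred(p)$ has the same size as $I$ — implicitly via the conjugation isomorphism of \Cref{LR=RR} — and concludes by the same subset-of-equal-cardinality observation. You have merely spelled out both symmetric cases explicitly.
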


\begin{proof} Suppose that $\leftred(p) = I$. The right redundancy of $p$ is a subset of $J$, but has the same size as $I$, hence it must be $J$.
% By the same argument, if $\rightred(p) = J$ then $\leftred(p) = I$.
\end{proof}

For the next lemma, recall from \cite[Section 4.4]{EKo} that if $I_{\bullet} = [I_0, \ldots, I_d]$ then $I_{\bullet}^{-1} = [I_d, \ldots, I_0]$ denotes the same expression in reversed order. By \cite[Proposition 4.7]{EKo}, if $I_{\bullet} \expr p$, then $I_{\bullet}^{-1} \expr p^{-1}$ where $p^{-1} = \{w^{-1} \mid w \in p\}$. Note that $p * p^{-1}$ is not usually the identity $(I,I)$-coset; only the identity coset is invertible under $*$-multiplication.

\begin{lem} \label{lem:reverseword} If $p$ is a core $(I,J)$-coset, then $p^{-1}$ is a core $(J,I)$-coset. \end{lem}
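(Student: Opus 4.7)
The plan is to apply the characterization of core cosets from Lemma~\ref{lem:coreiffminp}, which says that $p$ is a core $(I,J)$-coset if and only if $\ma{p} = w_I . \mi{p} = \mi{p} . w_J$. Viewed as a $(J,I)$-coset, $p^{-1}$ will be core if and only if $\ma{p^{-1}} = w_J . \mi{p^{-1}} = \mi{p^{-1}} . w_I$, so everything reduces to the identities $\ma{p^{-1}} = \ma{p}^{-1}$ and $\mi{p^{-1}} = \mi{p}^{-1}$.

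First, I would record that inversion $w \mapsto w^{-1}$ is a length-preserving anti-involution of $W$ which reverses the Bruhat order (in fact, preserves it, since Bruhat order is preserved under inversion). The reference \cite[Proposition 4.7]{EKo} cited just above the lemma already tells us that if $I_\bullet \expr p$ then $I_\bullet^{-1} \expr p^{-1}$, in particular $p^{-1}$ is a $(J,I)$-coset; combined with length-preservation, this yields $\ma{p^{-1}} = \ma{p}^{-1}$ and $\mi{p^{-1}} = \mi{p}^{-1}$.

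Next, I would take the core identity $\ma{p} = w_I . \mi{p} = \mi{p} . w_J$ provided by Lemma~\ref{lem:coreiffminp} and invert it. Since $w_I$ and $w_J$ are involutions and inversion preserves length additivity (i.e., $x.y$ being reduced is equivalent to $y^{-1}.x^{-1}$ being reduced), this gives
\[ \ma{p}^{-1} = \mi{p}^{-1} . w_I = w_J . \mi{p}^{-1}. \]
Substituting $\ma{p^{-1}} = \ma{p}^{-1}$ and $\mi{p^{-1}} = \mi{p}^{-1}$ yields precisely the core criterion from Lemma~\ref{lem:coreiffminp} for $p^{-1}$ as a $(J,I)$-coset, so $p^{-1}$ is a core coset.

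I do not expect any real obstacle here; the only thing to be careful about is that the minimal/maximal element of $p^{-1}$ really is the inverse of the minimal/maximal element of $p$, which is immediate from length-preservation of inversion together with the observation that inversion sends the $(I,J)$-coset $p$ bijectively to the $(J,I)$-coset $p^{-1}$.
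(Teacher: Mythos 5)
Your proof is correct. The only substantive difference from the paper is the choice of characterization of ``core'': the paper invokes the conjugation criterion built into its definition ($p$ is core iff $I = \mi{p} J \mi{p}^{-1}$), notes $\mi{(p^{-1})} = \mi{p}^{-1}$, and observes that $I = \mi{p} J \mi{p}^{-1}$ is equivalent to $J = \mi{p}^{-1} I \mi{p}$ --- a two-line argument. You instead route through Lemma~\ref{lem:coreiffminp} ($\ma{p} = w_I.\mi{p} = \mi{p}.w_J$) and invert the reduced factorizations, using that inversion is a length-preserving anti-automorphism so that $\ma{p}^{-1} = \mi{p}^{-1}.w_I = w_J.\mi{p}^{-1}$. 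Both arguments rest on the same key identification $\mi{(p^{-1})} = \mi{p}^{-1}$ (and, in your case, also $\ma{(p^{-1})} = \ma{p}^{-1}$), which you justify adequately. The paper's route is slightly more economical since it avoids any discussion of length additivity; yours has the mild virtue of exercising the factorization criterion that is used repeatedly elsewhere (e.g.\ in Lemma~\ref{lem:mimimi}). One cosmetic remark: the parenthetical about inversion ``reversing'' versus ``preserving'' the Bruhat order is muddled --- all you actually need is that inversion preserves length and maps the set $p$ bijectively onto $p^{-1}$, so the extremal elements correspond.
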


\begin{proof} Note that $\mi{(p^{-1})} = \mi{p}^{-1}$. Clearly $I = \mi{p} J \mi{p}^{-1}$ is equivalent to $J = \mi{p}^{-1} I \mi{p}$. \end{proof}

Finally, in preparation for the next section, we recall another result from \cite{EKo}. Recall that for any finitary subset $J \subset S$, conjugation by the longest element $w_J \in W_J$ induces an automorphism of $W_J$ which permutes its simple reflections $J$. For simplicity we do not write inverses on longest elements below.

\begin{lem}\label{lem:atomic} Let $I, J \subset S$ and $s, t \in S$ be such that $Is = Jt$ is finitary. We allow $s = t$ and $I = J$. Then $[I+s-t] = [I, Is, J]$ is always a reduced expression for the $(I,J)$-coset containing $w_{Is}$. It is the reduced expression of a core coset if and only if $w_{Is} t w_{Is} = s$, in which case it is the unique reduced expression for that coset. \end{lem}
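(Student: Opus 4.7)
Set $M := Is = Jt$, so $M$ extends both $I$ and $J$ by one generator and $w_M = w_{Is}$. The coset $p$ sits inside $W_M$ and contains its longest element $w_M$, whence $\ma{p} = w_M$ and $\ell(\ma{p}) = \ell(M)$; these are precisely the two conditions required by the definition of a reduced multistep expression for the one-step word $[[I\subset M\supset J]]$, establishing the first assertion.

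For the core characterization I will apply Lemma~\ref{lem:coreiffminp}: $p$ is core if and only if $\ma{p} = \mi{p}\cdot w_J$ is a reduced composition. Substituting $\ma{p} = w_M$ and using the standard length identity $\ell(w_M x) = \ell(w_M) - \ell(x)$ for $x\in W_M$ (which forces $\ell(w_M w_J) = \ell(w_M) - \ell(w_J)$ automatically), this rephrases as $\mi{p} = w_M w_J$. Since $w_M w_J$ already lies in $p$, the question reduces to checking when $w_M w_J$ is the minimal $(I,J)$-coset representative, i.e., when $\leftdes(w_M w_J)\cap I$ and $\rightdes(w_M w_J)\cap J$ are both empty.

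Both descent sets are computed via the conjugation identity $w_M w_J = w_{\sigma_M(J)}\cdot w_M$, where $\sigma_M$ is the involution of $M$ given by $r\mapsto w_M r w_M$. Combined with the length identity above, a direct case analysis on whether a simple reflection $r\in M$ lies in $J$, respectively $\sigma_M(J)$, gives
\[
\rightdes(w_M w_J)\cap M = M\setminus J = \{t\}, \qquad \leftdes(w_M w_J)\cap M = M\setminus\sigma_M(J) = \{\sigma_M(t)\}.
\]
The first is automatically disjoint from $J$; the second is disjoint from $I = M\setminus\{s\}$ precisely when $\sigma_M(t) = s$, i.e., $w_{Is}\,t\,w_{Is} = s$, yielding the desired equivalence. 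Uniqueness of the reduced expression when $p$ is core is already \cite[Proposition 5.14]{EKo} and can simply be cited. The only non-mechanical step is spotting the conjugation identity and recasting the core property as a descent condition on $w_M w_J$; after that, everything is a consequence of standard facts about longest elements of finite parabolic subgroups.
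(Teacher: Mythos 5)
Your argument is correct and follows essentially the same route as the paper: both reduce the core condition to $\mi{p} = w_{Is}\,w_J$ and exploit conjugation by $w_{Is}$, your descent-set computation of $\rightdes(w_{Is}w_J)\cap M$ and $\leftdes(w_{Is}w_J)\cap M$ being just a more explicit packaging of the paper's two directions. One small point: Lemma~\ref{lem:coreiffminp} requires \emph{both} $\ma{p}=w_I.\mi{p}$ and $\ma{p}=\mi{p}.w_J$, so your one-sided reformulation should note that $\# I=\# J$ here, whence Lemma~\ref{lem:samesizeeasyshowcore} upgrades a single redundancy equality to coreness.
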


\begin{proof} This is effectively a restatement of \cite[Lemma 5.22]{EKo}. For completeness, we spell out some parts of the proof again.

Let $[I,Is,J] \expr p$. Then $\ma{p} = w_{Is}$ and it is straightforward from the definition that $[I,Is,J]$ is reduced. We wish to show that $p$ is core if and only if $w_{Is} t w_{Is} = s$, which holds if and only if $w_{Is} J w_{Is} = I$.
% That $[I,Is] \expr p_1$ and $[Is,J] \expr p_2$ are reduced is straightforward, with $\ma{p_1} = w_{Is} = \ma{p_2}$. By Proposition \ref{prop:concat}\HK{I think this proposition is not in this paper but we can do `By the second paragraph in Section 1.1'? Otherwise refer to \cite[Corollary 3.4]{EKo}}, the composition $[I,Is,J]$ is reduced if and only if 
% \[ \ma{p_1} w_{Is}^{-1} \ma{p_2}  = (\ma{p_1} w_{Is}^{-1}) . \ma{p_2} \]
% which is obvious since $\ma{p_1} w_{Is}^{-1} = \id$.

If $p$ is a core coset, then $\mi{p} = w_{Is} w_J$, by \eqref{mapforcore}, and
\begin{equation} I = \mi{p} J \mi{p}^{-1} = w_{Is} w_J J w_J w_{Is} = w_{Is} J w_{Is}. \end{equation}

Meanwhile, suppose that $w_{Is} J w_{Is} = I$.  Then $w_{Is} W_J = W_I w_{Is}$. Since the underlying set of the double coset $p$ is $W_I w_{Is} W_J = w_{Is} W_J$, it has minimal element $\mi{p} = w_{Is} w_J$. By Lemma \ref{lem:coreiffminp}, $p$ is a core coset.

The statement that $[I+s-t]$ is the unique reduced expression in this case is \cite[Proposition 5.14(1)]{EKo}. \end{proof}

%=====================
\subsection{Atomic cosets and reduced expressions for core cosets}\label{s.atom}
%=====================

Now we address the decomposition of reduced expressions for core cosets.

\begin{prop} \label{prop:splitcore} Let $r$ be a core $(I,K)$-coset, and suppose that $r = p.q$ where $p$ is an $(I,J)$-coset and $q$ is a $(J,K)$-coset. If $J$ has the same size as $I$ and $K$, then $p$ and $q$ are also core cosets. \end{prop}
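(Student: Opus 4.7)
The plan is to show $\leftred(p) = I$, and then invoke Lemma~\ref{lem:samesizeeasyshowcore} (using $|I| = |J|$) to conclude that $p$ is a core coset. The statement for $q$ will then come for free by passing to inverses: by Lemma~\ref{lem:reverseword} the coset $r^{-1}$ is a core $(K,I)$-coset, and from the definition of reduced composition via max elements one checks that $r^{-1} = q^{-1}.p^{-1}$ is again reduced, with $|K| = |J| = |I|$. Applying the argument to $r^{-1}$ then shows $q^{-1}$ is core, and a second application of Lemma~\ref{lem:reverseword} gives that $q$ is core.

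To establish $\leftred(p) = I$, I would argue by contradiction. Suppose $s \in I \setminus \leftred(p)$. By the second bullet of Theorem~\ref{thm:addremove}, $p$ admits a reduced expression of the form $M_\bullet = [I, I \setminus s, \ldots, J]$. Pick any reduced expression $N_\bullet = [J, \ldots, K]$ for $q$ and form the concatenation $M_\bullet \circ N_\bullet$. The routine check to perform is that this concatenation is itself reduced: its alternating product along the expression is $\ma{p} \cdot w_J^{-1} \cdot \ma{q} = \ma{p} * \ma{q} = \ma{r}$, and its alternating length sum is $\ell(\ma{p}) - \ell(w_J) + \ell(\ma{q})$, which equals $\ell(\ma{r})$ precisely because $r = p.q$ is a reduced composition. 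So $M_\bullet \circ N_\bullet$ is a reduced expression for $r$ of the form $[I, I \setminus s, \ldots, K]$, whence Theorem~\ref{thm:addremove} forces $s \in I \setminus \leftred(r)$. This contradicts the fact that $r$ is core, which by definition says $\leftred(r) = I$.

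I do not expect any real obstacle here. The only point requiring care is the bookkeeping fact that concatenating two reduced expressions yields a reduced expression whenever the underlying cosets compose via a reduced composition; this is essentially immediate from the formulas defining ``reduced expression'' in the introduction, but one should either verify it on the spot as above or cite the corresponding statement from \cite{EKo}. Once that is in hand, the proof is a one-line application of Theorem~\ref{thm:addremove} on both sides, paired with Lemmas \ref{lem:samesizeeasyshowcore} and \ref{lem:reverseword}.
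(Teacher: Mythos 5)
Your proof is correct and follows essentially the same route as the paper: establish $\leftred(p) = I$, conclude that $p$ is core via Lemma~\ref{lem:samesizeeasyshowcore}, and handle $q$ by reversal using Lemma~\ref{lem:reverseword}. The only difference is how the containment $\leftred(r) \subset \leftred(p)$ is obtained: the paper cites the weak monotonicity of the left redundancy sequence along a reduced expression (\cite[Lemma 4.22]{EKo}), whereas you rederive the same containment from Theorem~\ref{thm:addremove} together with the (correctly verified) fact that concatenating reduced expressions of the factors of a reduced composition yields a reduced expression for the composite.
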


\begin{proof} By \cite[Lemma 4.22]{EKo}, the left redundancy sequence of a reduced expression is weakly decreasing, see \cite[Definition 2.17]{EKo}. The immediate consequence is that $\leftred(r) \subset \leftred(p)$. But $\leftred(r) = I$ and $\leftred(p) \subset I$ so $\leftred(p) = I$. By Lemma \ref{lem:samesizeeasyshowcore}, $p$ is core.

Reversing the order of the words we have $r^{-1} = q^{-1} . p^{-1}$. Then $r^{-1}$ is a core coset by Lemma \ref{lem:reverseword}, and $q^{-1}$ is a core coset by the previous paragraph. Reversing again, $q$ is a core coset. \end{proof}

Suppose that $[I = I_0, \ldots, I_d = K]$ is a reduced expression for a core coset $r$. None of the subsets $I_k$ has size less than $I$, by \cite[Lemma 4.22]{EKo}. If $I_k$ has the same size as $I$ and $K$ for some $1 \le k \le d-1$, then Proposition \ref{prop:splitcore} enables us to factor $r$ (through $I_k$) as a reduced composition $p.q$ of smaller core cosets. Not every reduced expression for $r$ will admit such a factorization, as the next example shows. But the following theorem and corollary state that some reduced expression for $r$ will factor into small pieces.

\begin{ex} We continue with the notation given in Example \ref{ExS4}. Consider the reduced expression $[\mt, s, st, t, tu, u, \mt]$ for the $(\mt, \mt)$-coset $\{tstut\}$. This expression does not factor as a composition of reduced expressions for core cosets. \end{ex}

\begin{thm} \label{thm:corestartswithatomic} Let $r$ be a core $(I,J)$-coset. There are two cases. \begin{itemize}
\item $r$ is an \emph{identity coset}, meaning $I = J$ and $\mi{r} = \id$.  Then $[I]$ is the unique reduced expression for $r$.
%\NL{Rephrase: The coset $r$ has only one reduced expression, namely$[I]$. This is equivalent to say that$r$ is an \emph{identity coset}, meaning $I = J$ and $\mi{r} = \id$. }
\item $r = p.q$ for core cosets $p$ and $q$, where $p = [I + s - t]$ for some $s \in S \setminus I$, where $t = w_{Is} s w_{Is}$. Moreover, $[I + s - t]$ is the unique reduced expression for $p$. We allow the possibility that $r = p$ and $q$ is an identity coset.
\end{itemize}
\end{thm}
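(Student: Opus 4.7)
The proof splits on whether $\mi{r} = \id$.

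If $\mi{r} = \id$, then the core condition $\mi{r} J \mi{r}^{-1} = I$ forces $I = J$ and $\ma{r} = w_I$. Since $\leftdes(r) = \leftred(r) = I$, Theorem~\ref{thm:addremove} rules out any nontrivial first step of a reduced expression on either side, so $[I]$ is the unique reduced expression.

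In the main case $\mi{r} \neq \id$, I would pick any $s \in \leftdes(\mi{r})$; by left-minimality, $s \notin I$. The first substantive step is to lift this to a left descent of $\ma{r}$: using $\ma{r} = \mi{r} . w_J$ from Lemma~\ref{lem:coreiffminp}, a length count gives $\ell(s \ma{r}) \leq \ell(s \mi{r}) + \ell(J) = \ell(\ma{r}) - 1$, so $s \in \leftdes(\ma{r}) \setminus I$. Theorem~\ref{thm:addremove} then produces a reduced expression for $r$ starting $[I, Is, \ldots, J]$, which in particular guarantees $Is$ is finitary; securing finitariness is the main technical obstacle, and it is precisely the core hypothesis that makes it available. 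Setting $t := w_{Is} s w_{Is} \in Is$ and $\hat{t} := Is \setminus t$, Lemma~\ref{lem:atomic} produces the atomic core $(I, \hat{t})$-coset $p = [I + s - t]$ with $\ma{p} = w_{Is}$ and $\mi{p} = w_I w_{Is}$, together with the uniqueness of this reduced expression.

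Since $Is \subseteq \leftdes(\ma{r})$, standard Coxeter theory factors $\ma{r} = w_{Is} . v$ with $v$ satisfying $\leftdes(v) \cap Is = \emptyset$. A direct computation in $W$ gives $\mi{p} v = w_I \ma{r} = \mi{r}$, and lengths add, so $\mi{r} = \mi{p} . v$ is a reduced composition. Let $q$ be the $(\hat{t}, J)$-coset containing $v$. To see $v = \mi{q}$, observe that $\leftdes(v) \cap \hat{t} = \emptyset$ is automatic, while on the right any $u \in J \cap \rightdes(v)$ would force $\mi{p}(vu) = \mi{r} u$ to have length at most $\ell(\mi{r}) - 1$, contradicting the right-minimality of $\mi{r}$. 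A short conjugation calculation---using $\ma{r} J \ma{r}^{-1} = \mi{r} J \mi{r}^{-1} = I$ together with $w_{Is} I w_{Is} = \hat{t}$ (conjugation by $w_{Is}$ swapping $s$ and $t$ within $Is$)---shows $\mi{q} J \mi{q}^{-1} = \hat{t}$, so Lemma~\ref{lem:samesizeeasyshowcore} makes $q$ a core coset. Lemma~\ref{lem:mimimi} then applies to the core pair $(p, q)$, giving the desired decomposition $r = p . q$. The degenerate subcase $r = p$ arises precisely when $v = \id$, in which case the same conjugation identity forces $\hat{t} = J$ and $q$ is an identity coset.
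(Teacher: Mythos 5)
Your proof is correct, but it takes a genuinely different route from the paper's in the main case. The paper splits on whether $\leftdes(r)\setminus I$ is empty, adds $s$ to reach an $(Is,J)$-coset $n$, removes an \emph{arbitrary} $t\in Is\setminus\leftred(n)$ (such $t$ exists by a size count), and then invokes Proposition~\ref{prop:splitcore} --- which rests on the monotonicity of the left-redundancy sequence along reduced expressions from \cite{EKo} --- to conclude both factors are core; only afterwards does Lemma~\ref{lem:atomic} force $t=w_{Is}sw_{Is}$. You instead commit to $t=w_{Is}sw_{Is}$ from the start and work entirely at the level of elements: factor $\ma{r}=w_{Is}.v$, check $v$ is the minimal element of its $(\hat t,J)$-coset $q$, verify $\mi{q}J\mi{q}^{-1}=\hat t$ by conjugation so that $q$ is core via Lemma~\ref{lem:samesizeeasyshowcore}, and glue with Lemma~\ref{lem:mimimi}. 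Your version is more constructive (it exhibits $\mi{q}$ explicitly, in the spirit of the paper's Algorithm subsection) and bypasses Proposition~\ref{prop:splitcore} entirely; the paper's version is shorter given its toolkit and additionally shows that \emph{any} admissible removal $t$ is forced to equal $w_{Is}sw_{Is}$. One small framing quibble: finitariness of $Is$ is not really where the core hypothesis enters --- it follows from $Is\subseteq\leftdes(\ma{r})$, and $I\subseteq\leftdes(\ma{r})$ holds for every $(I,J)$-coset; what coreness actually buys you is the lift of $s$ from $\leftdes(\mi{r})$ to $\leftdes(\ma{r})$ via $\ma{r}=\mi{r}.w_J$, which you do carry out correctly.
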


\begin{proof}

First consider the case $\leftdes(r)\setminus I= \emptyset$. 
By definition, as $r$ is core, $I=\leftred(p)$. By Theorem \ref{thm:addremove}, no simple reflection can be added or removed from $I$ in a reduced expression for $r$, so $[I]$ is the unique reduced expression for $r$. This implies that $I=J$ and  that $\underline{r}=\id$. 

Let us now consider the case $ \leftdes(r)\setminus I \neq \emptyset$. If $s\in \leftdes(r)\setminus I$
then some reduced expression for $r$ has the form $[I,Is, \ldots, J]$. Let $n$ be the $(Is,J)$-coset such that $r = [I,Is] . n$.
The left redundancy of $n$ is at most the size of $J$, so it is a proper subset of $Is$. Again by Theorem \ref{thm:addremove}, for any $t \in Is \setminus \leftred(n)$, $n$ has a reduced expression of the form $[Is, Is \setminus t, \ldots J]$. Let $q$ be the $(Is \setminus t,J)$-coset such that $n = [Is,Is\setminus t] . q$.
Then
\begin{equation} r = [I,Is, Is \setminus t] . q, \end{equation}
as desired. Both $q$ and $p \expr [I + s -t]$ are core cosets, by Proposition \ref{prop:splitcore}. That  $t = w_{Is} s w_{Is}$, and that $[I + s - t]$ is the unique reduced expression for $p$, are implied by Lemma \ref{lem:atomic}.
\end{proof}

% It is straightforward  to observe that, for any identity $(I,I)$-coset, $[I]$ is its unique reduced expression. Conversely, if $r$ is an $(I,J)$-coset with $\leftred(r) = I = \leftdes(r)$, then Theorem \ref{thm:addremove} implies that no simple reflection can be added or removed in a reduced expression for $r$. Thus $I = J$ and $r$ has a unique reduced expression $[I]$.

% %, as $r$ is core, there is no reduced expression of $r$ starting in $[I,I \setminus s, \ldots]$ or ending in $ [\ldots, J\setminus s, J]$

% If $r$ is a core coset and is not an identity coset, then $I \subsetneq \leftdes(r)$. By Theorem \ref{thm:addremove}, if $s \in \leftdes(r) \setminus I$, then some reduced expression for $r$ has the form $[I,Is, \ldots, J]$. Let $n$ be the $(Is,J)$-coset such that $r = [I,Is] . n$.
% The left redundancy of $n$ is at most the size of $J$, so it is a proper subset of $Is$. Again by Theorem \ref{thm:addremove}, for any $t \in Is \setminus \leftred(n)$, $n$ has a reduced expression of the form $[Is, Is \setminus t, \ldots J]$. Let $q$ be the $(Is \setminus t,J)$-coset such that $n = [Is,Is\setminus t] . q$.
% Then
% \begin{equation} r = [I,Is, Is \setminus t] . q, \end{equation}
% as desired. Both $q$ and $p \expr [I + s -t]$ are core cosets, by Proposition \ref{prop:splitcore}. That  $t = w_{Is} s w_{Is}$, and that $[I + s - t]$ is the unique reduced expression for $p$, are implied by Lemma \ref{lem:atomic}.

\begin{cor} \label{cor:coresplitsintoatomic} Any core $(I,J)$-coset $p$ has a reduced expression of the form 
\begin{equation}\label{eq:compositionatomics} I_{\bullet} = [I = I_0, I_1, \ldots, I_{2k}] = [I + s_1 - s_2 + s_3 - s_4 \ldots + s_{2k-1} - s_{2k}] \end{equation} for some $s_i \in S$. In particular, we have
\begin{equation} p = p_1 . p_2 . \cdots . p_k \end{equation}
where $p_i$ is a core coset with reduced expression $[I_{2i-2}, I_{2i-1}, I_{2i}]$. Moreover, $s_{2i} = w_{I_{2i-1}} s_{2i-1} w_{I_{2i-1}}$ for all $1 \le k$.
\end{cor}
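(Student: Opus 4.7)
The plan is to prove the corollary by a straightforward induction on $\ell(\mi{p})$, unrolling Theorem \ref{thm:corestartswithatomic} one atomic step at a time. The base case is $\ell(\mi{p}) = 0$: then $\mi{p} = \id$, so by the first bullet of the theorem $p$ is an identity coset with unique reduced expression $[I]$, which is the output for $k = 0$.

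For the inductive step, the second bullet of Theorem \ref{thm:corestartswithatomic} supplies a factorization $p = p_1 . q$ as a reduced composition of core cosets, where $p_1$ has unique reduced expression $[I + s_1 - s_2] = [I, I_1, I_2]$ (with $I_1 = Is_1$ and $s_2 = w_{I_1} s_1 w_{I_1}$) and $q$ is a core $(I_2, J)$-coset. By Lemma \ref{lem:mimimi}, $\mi{p} = \mi{p_1} . \mi{q}$, so $\ell(\mi{q}) = \ell(\mi{p}) - \ell(\mi{p_1})$. Since $p_1$ is atomic and core, $\mi{p_1} = w_{I_1} w_{I_2}^{-1}$ by Lemma \ref{lem:coreiffminp}, which has length $\ell(I_1) - \ell(I_2) > 0$ because $I_2 \subsetneq I_1$. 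Hence $\ell(\mi{q}) < \ell(\mi{p})$ and the inductive hypothesis applies to $q$.

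This yields $q = p_2 . \cdots . p_k$ with reduced expression $[I_2 + s_3 - s_4 + \cdots + s_{2k-1} - s_{2k}]$ and the stated conjugation relations $s_{2i} = w_{I_{2i-1}} s_{2i-1} w_{I_{2i-1}}$ for $2 \le i \le k$. Prepending the atomic step yields the factorization $p = p_1 . p_2 . \cdots . p_k$ and the concatenated expression $[I + s_1 - s_2 + \cdots + s_{2k-1} - s_{2k}]$. The concatenation is reduced because the concatenation of reduced expressions for a reduced composition of cosets is reduced (cf.\ the discussion following the definition of $*$-composition in \Cref{s:singmon}). The $i=1$ relation $s_2 = w_{I_1} s_1 w_{I_1}$ is recorded in Lemma \ref{lem:atomic} (and in the statement of the second bullet of Theorem \ref{thm:corestartswithatomic}).

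The only real content beyond bookkeeping is guaranteeing that the induction strictly decreases, which reduces to the bound $\ell(\mi{p_1}) > 0$ for any atomic coset, and this is immediate from $I \subsetneq Is$ together with the formula $\mi{p_1} = w_{Is} w_{J}^{-1}$ of Lemma \ref{lem:coreiffminp}. I do not anticipate any further obstacles.
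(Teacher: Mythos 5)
Your proof is correct and is exactly the paper's intended argument: the paper's entire proof is ``Iterate Theorem~\ref{thm:corestartswithatomic},'' and your induction on $\ell(\mi{p})$, with the termination guaranteed by $\ell(\mi{p_1}) = \ell(I_1) - \ell(I_2) > 0$, is just a careful unrolling of that iteration. No issues.
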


\begin{proof} Iterate Theorem \ref{thm:corestartswithatomic}. \end{proof}

\begin{defn}
 A coset is \emph{atomic} if it is a core coset and has a reduced expression of the form $[I+s-t]$ for some $s, t \in S$. \end{defn}

Equivalently, by Lemma \ref{lem:atomic}, an $(I,J)$ coset $p$ is atomic if $J = Is \setminus t$ for $t = w_{Is} s w_{Is}$, and $p$ contains $w_{Is}$.

Recall that Theorem \ref{corehasatomic} stated that  any core coset is the reduced composition of atomic cosets.

\begin{proof}[Proof of Theorem \ref{corehasatomic}] This is a restatement of Corollary \ref{cor:coresplitsintoatomic}. \end{proof}

\subsection{Algorithm}

To conclude, here is an algorithm which produces a reduced atomic expression for any core $(I,J)$-coset $p$. That this algorithm works is effectively the proof of Theorem \ref{thm:corestartswithatomic}.

Choose an element $s \in \leftdes(\ma{p}) \setminus I$, and add it. Now subtract $t = w_{Is} s w_{Is}$. Let $I' = Is \setminus t$. Let $q$ be such that $p = [I + s - t] . q$. Explicitly, we let $q$ be the $(I', J)$-coset containing $w_{I'} w_{Is} \ma{p}$, justified since the definition of a reduced expression forces
\begin{equation} \ma{p} = (w_{Is} w_{I'}^{-1}) . \ma{q}. \end{equation}
Now repeat with the core $(I',J)$-coset $q$ replacing $p$.

This process terminates when $\leftdes(\ma{q}) = I'$, in which case $q$ is the identity coset.
The process is guaranteed to terminate by induction on the length of $\ma{p}$.

\subsection{Composition of atomic cosets}

\begin{lem} \label{reducedcompofatomiciscore}
    Let $p_1,\ldots, p_k$ be atomic cosets such that the composition $p_1*\ldots *p_k$ is well-defined and reduced. Then 
    $p_1*\ldots *p_k$ is a core coset.
\end{lem}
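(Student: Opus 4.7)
The natural approach is induction on $k$, with Lemma~\ref{lem:mimimi} doing all the heavy lifting.

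For the base case $k = 1$, an atomic coset is by definition a core coset, so there is nothing to prove.

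For the inductive step, set $r = p_1 * \ldots * p_{k-1}$ and suppose the full composition $r * p_k = p_1 * \ldots * p_k$ is reduced. The first task is to observe that $r$ itself is a reduced composition. This is because reducedness of an iterated $*$-composition is equivalent to reducedness of the concatenation of reduced expressions $[I+s_i - t_i]$ for each $p_i$ (here using Lemma \ref{lem:atomic} to say each atomic coset has its unique atomic reduced expression), and any initial segment of a reduced expression is itself a reduced expression for the double coset it expresses. Thus $r$ is a reduced composition of atomic cosets, so by the inductive hypothesis $r$ is a core coset. Since $p_k$ is atomic, hence core, and $r * p_k$ is reduced, Lemma~\ref{lem:mimimi} directly yields that $r * p_k$ is a core coset.

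There is no real obstacle here; the only point requiring any care is the passage from reducedness of the full composition to reducedness of the initial segment, which one can handle either by appealing to the standard fact that subexpressions of reduced expressions are reduced (expressing each $p_i$ atomically and using the compatibility between $*$-products and concatenation of expressions) or by a direct length computation using $\ma{r * p_k} = \ma{r} * \ma{p_k}$. Either way, once that is in hand, Lemma~\ref{lem:mimimi} closes the induction immediately.
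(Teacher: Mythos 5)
Your proof is correct and follows essentially the same route as the paper: the paper handles $k=2$ as a special case of Lemma~\ref{lem:mimimi} and says the general case "easily follows by induction," which is precisely the induction you carry out. Your extra care about why the initial segment $p_1 * \cdots * p_{k-1}$ is itself a reduced composition (so the inductive hypothesis applies) is a point the paper leaves implicit, and your justification of it is sound.
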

\begin{proof}
The case $k=2$ is a special case of \Cref{lem:mimimi}. The general case easily follows by induction.
\end{proof}

\begin{prop}\label{atomatom}
    Let $p$ be an atomic $(I,J)$-coset, and $q$ be an atomic $(J,K)$-coset.
    \begin{enumerate}
        
        \item\label{pqcore} If $p*q$ is not a reduced composition, then $p*q$ is a core coset if and only if $I=J=K$ and $p=q$, in which case we have $p*p=p$.
        \item\label{ppp} We have $p*(p^{-1})*p = p$.
    \end{enumerate}
\end{prop}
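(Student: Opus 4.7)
The plan is to dispatch the two parts separately: parts (2) and the ``if'' direction of (1) are short computations in the star monoid, while the ``only if'' direction of (1) is the real content, handled by an atomic decomposition plus length count.

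For part (2), I would note that $\overline{p^{-1}} = \overline{p}^{-1} = w_{Is}^{-1} = w_{Is}$, so $\overline{p*p^{-1}*p} = w_{Is}*w_{Is}*w_{Is} = w_{Is}$ by the $*$-quadratic relation iterated. Hence $p*p^{-1}*p$ is the $(I,J)$-coset containing $w_{Is}$, which is $p$. The same computation settles the ``if'' direction of (1): $\overline{p*p} = w_{Is}*w_{Is} = w_{Is} = \overline{p}$ gives $p*p = p$, and this is not a reduced composition because $\ell(\overline{p*p}) = \ell(w_{Is})$ while the reduced length would be $\ell(\underline{p}) + \ell(\overline{p}) > \ell(w_{Is})$, using $\ell(\underline{p}) = \ell(w_{Is}) - \ell(w_J) > 0$ since $Is \supsetneq J$.

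For the ``only if'' direction of (1), assume $p*q$ is core and not reduced. The general fact $\leftdes(x) \subseteq \leftdes(x*y)$ applied to $x = \overline{p} = w_{Is}$ gives $Is \subseteq \leftdes(\overline{p*q})$, and symmetrically $Ju \subseteq \rightdes(\overline{p*q})$. In particular $s \in \leftdes(\overline{p*q}) \setminus I$ and $v \in \rightdes(\overline{p*q}) \setminus K$, so Theorem~\ref{thm:corestartswithatomic} extracts the atomic piece $[I+s-t] = p$ from the left, and its right-sided counterpart extracts $[J,Ju,K] = q$ from the right. The delicate step is to arrange both in a single reduced atomic decomposition $p*q = p . p_2 . \cdots . p_{k-1} . q$ with $p_1 = p$ and $p_k = q$ simultaneously; I plan to establish this by first writing $p*q = p.q_1$ via the left extraction and then applying Theorem~\ref{thm:addremove} to the core coset $q_1$ on the right, verifying that the right descent $v$ persists into $\rightdes(\overline{q_1})$ once the reduced factorization by $\underline{p}$ on the left is accounted for.

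Given such a joint decomposition, the length formula for reduced compositions of atomic core cosets reads
\[ \ell(\overline{p*q}) \;=\; \ell(\underline{p}) + \sum_{i=2}^{k-1}\ell(\underline{p_i}) + \ell(\overline{q}), \]
while the non-reduced hypothesis gives $\ell(\overline{p*q}) < \ell(\underline{p}) + \ell(\overline{q})$. Subtracting yields $\sum_{i=2}^{k-1}\ell(\underline{p_i}) < 0$, impossible since each $\underline{p_i}$ for $i \geq 2$ has positive length (its parabolic strictly enlarges at the peak); and the $k=2$ case gives an empty sum producing the equality $\ell(\overline{p*q}) = \ell(\underline{p}) + \ell(\overline{q})$, a contradiction. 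So $k=1$, i.e., $p*q = p$ as cosets. Matching sides forces $K = J$, and $w_{Is} = \overline{p*q} = w_{Is}*w_{Ju}$ forces $Ju \subseteq Is$; the atomic structure of $q$ then yields $u=t$, $v=s$, $K = Is \setminus s = I$, so $I=J=K$, and the two atomic $(I,I)$-cosets $p$ and $q$ with common maximum $w_{Is}$ coincide, so $p=q$. The principal obstacle is the joint decomposition claim in the previous paragraph; once that is in hand, the length count is routine.
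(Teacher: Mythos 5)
Parts (2) and the ``if'' direction of (1) are correct and match the paper. The ``only if'' direction of (1) has a genuine gap, located exactly where you flag the ``principal obstacle'': the joint reduced decomposition $p*q = p\,.\,p_2\,.\cdots.\,p_{k-1}\,.\,q$ does not exist under your hypotheses, so the length count has nothing to apply to. Your own formula shows why: by Lemma~\ref{lem:mimimi}, any reduced atomic decomposition of the core coset $p*q$ beginning with $p$ and ending with $q$ gives $\ell(\ma{p*q}) = \ell(\mi{p}) + \sum_{i=2}^{k-1}\ell(\mi{p_i}) + \ell(\ma{q}) \ge \ell(\mi{p}) + \ell(\ma{q})$, which says precisely that $p*q$ \emph{is} reduced. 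So under the standing assumption that $p*q$ is not reduced, such a decomposition with $k\ge 2$ is impossible, while the $k=1$ outcome presupposes $p=q$. Concretely, the step that fails is ``the right descent $v$ persists into $\rightdes(\ma{q_1})$'': descents do not persist through reduced factorizations in general (in $\calS_3$ one has $s_2 \in \rightdes(s_1s_2\cdot s_1)$ although $s_2 \notin \rightdes(s_1)$), and in your situation persistence of $v$ is \emph{equivalent} to reducedness of $p*q$, so it must fail whenever your hypotheses hold. The argument is therefore circular: establishing the joint decomposition is as hard as the statement itself.

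For comparison, the paper's proof works locally on the expression $p*q \expr [I+s-s'+t-t']$. Reducedness can only fail at the step $+t$; since $\rightred$ cannot grow for size reasons, the failure forces $\mi{p}$ not to be minimal in its $(I,Jt)$-coset, and since $\rightdes(\mi{p})=\{s'\}$ (a nonempty subset of $Js'$ disjoint from $J$), this pins down $s'=t$. The $*$-quadratic relation then collapses the expression to $[I+s-t']$, and Lemma~\ref{lem:atomic} says this is core if and only if $t'=s'$, i.e.\ $I=J=K$ and $p=q$. You could salvage the valid part of your approach (the left extraction $p*q=p\,.\,q_1$, which does hold) by analyzing $q_1$ directly, but some version of the $s'=t$ descent analysis of $\mi{p}$ appears unavoidable.
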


\begin{proof}
Let $p\expr[I+s-s']$ and $q\expr [J+t-t']$, so that $Js'=Is$ and $Kt'=Jt$, and suppose $p*q\expr[I+s-s'+t-t']$ is not reduced. We know that $[I+s-s']$ is reduced. Let $r$ be the coset expressed by $[I+s-s'+t]$.  If $[I+s-s'+t]$ were reduced, then $[I+s-s'+t-t']$ would also be reduced, since subtracting a simple reflection always preserves reduced expressions. Thus reducedness must fail at the step $+t$, see \cite[Definition 2.29]{EKo}. This can happen in one of two ways: either $\rightred(p) \subsetneq \rightred(r)$, or $\mi{p} > \mi{r}$. The former is impossible, since $\rightred(p) = J$ while the size of $\rightred(r)$ is at most the size of $I$, which is the size of $J$. Thus $\mi{p} \ne \mi{r}$.

\revise{As $\mi{p}$ is a minimal coset representative,} $\rightdes(\mi{p})$ is disjoint from $J$. Since $\mi{p}$ is not the identity element, $\rightdes(\mi{p})$ is nonempty. Since $\mi{p} \in W_{Js'}$ we must have $\rightdes(\mi{p}) = \{s'\}$. If $s' \ne t$ then $\rightdes(\mi{p}) \cap Jt$ would be empty, implying that $\mi{p} = \mi{r}$ is minimal in its $(I,Jt)$ coset, a contradiction. Thus $s' = t$.

Then 
\[ [I+s-s'+t-t'] = [I + s - t + t - t'] \expr [I+s-t']\] by the $*$-quadratic relation. This expresses a core coset if and only if $t'=s'$, which is if and only if $p=q$.

The last statement is straightforward. If $p \expr [I,M,J]$ then $\ma{p} = \ma{p^{-1}} = w_M$ and the result follows from $w_M * w_M = w_M$.
\end{proof}

\subsection{Atomic subcategories} \label{subsec:atomicsubcats}

In \S\ref{s:singmon} we recalled from \cite{EKo} the definition of the category $\SC = \SC(W,S)$, also called the singular Coxeter monoid. The objects are the finitary subsets of $S$, and the morphisms from $J$ to $I$ are $(I,J)$-cosets $p$, with composition operation $*$.

\begin{defn} Let $\AC = \AC(W,S)$ be the (non-full) subcategory of $\SC(W,S)$ generated by the atomic cosets. \end{defn}

Every core coset $p$ appears in the subcategory $\AC$ by Theorem \ref{corehasatomic}. Moreover, every reduced composition of atomic cosets is core by Lemma \ref{reducedcompofatomiciscore}. However, there are non-reduced compositions of atomic cosets which are not core, so that $\AC$ contains more than just the core cosets. Indeed, it is unclear which cosets have expressions as products of atomic cosets, and this makes $\AC$ fairly mysterious.

\begin{ex}% Let $W_I = S_2 \times S_1 \subset W = S_3$. There are two $(I,I)$-cosets,
\revise{Let $W= S_3$ and $I=\{s_1\}\subset S=\{s_1,s_2\}$. There are two $(I,I)$-cosets, namely the identity coset $W_I$ (which is core) and the complement $p=W_I s_2 W_I$. The coset $p$ has  %a reduced expression $[I,\emptyset,\{s_2\},\emptyset,I]$ and 
$\leftred(p) = \emptyset$, so it is not a core coset, while its non-reduced expression 
\[[I,S,\{s_2\},S,I]=[I,S,\{s_2\}]\circ [\{s_2\},S,I]\] 
shows that $p$ belongs to $\AC$.} %of which one is core, but both are compositions of atomic cosets. 
\end{ex}

\begin{ex} 
\revise{Let $W=S_4$ and $I=\{s_1,s_3\}\subset S = \{s_1,s_2,s_3\}$. There are three $(I,I)$-cosets $p,q,r$ with $\mi{p}=e, \mi{q}=s_2,\mi{r}=s_2s_3s_1s_2$. The cosets $p$ and $r$ are core and have atomic reduced expressions
\[p \expr [I],\quad r \expr [I,S,I].\]
The coset $q$ has a reduced expression
\[q\expr [[I\supset \emptyset \subset \{s_2\} \supset \emptyset \subset I]]\]
and is not core.
Since $r$ is the unique atomic $(I,J)$-coset in $W$ (for any $J\subset S$), and since $r* r = r$, the coset $q$ is not a composition of atomic cosets.}
%Let $W_I = S_2 \times S_2 \subset W = S_4$. There are three $(I,I)$-cosets, of which two are core, and the other is not a composition of atomic cosets. 
\end{ex}

To isolate core cosets, we can shift our attention to a setting which does away with non-reduced compositions. In the traditional setting, the Coxeter monoid has generators $s \in S$ with quadratic relation $s * s = s$, while the nilCoxeter algebra has generators $\pa_s$ with quadratic relation $\pa_s \pa_s = 0$. The nilCoxeter algebra has the advantage that compositions of the generators associated to non-reduced compositions are zero. Returning to the world of double cosets, in \cite{paper1} we introduced a variant on $\SC$ called the nilCoxeter algebroid and denoted $\Dem = \Dem(W,S)$. The objects are finitary subsets of $S$, and the morphism space from $J$ to $I$ is the free $\Z$-module spanned by morphisms $\pa_p$ for $(I,J)$-cosets $p$. We have
\begin{equation} \label{eq:Demrelation} \pa_p \circ \pa_q = \begin{cases} \pa_{p * q} & \text{ if $p * q = p.q$ is reduced,} \\ 0 & \text{else.} \end{cases}\end{equation}

\begin{rem} In \cite{paper1} we provide several equivalent constructions of $\Dem$. It can be viewed as the associated graded of the linearization of $\SC$ with respect to a length filtration. It has a presentation by generators and relations, with the same generators and braid relations as $\SC$ but with a vanishing quadratic relation. Finally, it also appears as a non-full subcategory of vector spaces, encoding the action of push and pull operators on the cohomology rings of partial flag varieties. \end{rem}

\begin{rem}
%\BE{added this, commented LP and HK comments below} 
In \cite{paper1} we defined $\Dem$ over a field, rather than over $\Z$. The equation \eqref{eq:Demrelation} gives a well-defined presentation over $\Z$. After base change to any field, this recovers the construction in \cite{paper1}, where it is proven that $\{\pa_p\}$ forms a basis. Therefore, $\{\pa_p\}$ forms a basis over $\Z$ as well.
\end{rem}

% \LP{We did not define it over $\Z$. One could you the description by generators and relations of $\Dem$ to define it over $\Z$ Maybe we should have done that it in paper 1. In later section we are using $\Bbbk$ for a realization without actually introducing it.}\HK{Changing it in paper 1 sounds good, but since it is submitted I'm fine with a field k too}

\begin{defn} Let $\AD = \AD(W,S)$ be the (non-full) subcategory of $\Dem$ generated by $\pa_p$ for atomic cosets $p$. \end{defn}

Unlike $\AC$, we have a precise handle on the size of $\AD$.

\begin{thm} The subcategory $\AD$ has a basis $\{\pa_p\}_{p \; \mathrm{is} \; \mathrm{core}}$. \end{thm}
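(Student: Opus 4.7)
The plan is to verify the three standard ingredients for a basis statement: (a) every $\pa_p$ with $p$ core lies in $\AD$, (b) every morphism of $\AD$ is a $\Z$-linear combination of such $\pa_p$, and (c) the set $\{\pa_p : p \text{ core}\}$ is $\Z$-linearly independent.

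For (a), given any core $(I,J)$-coset $p$, Corollary \ref{cor:coresplitsintoatomic} supplies a reduced atomic expression $p = p_1 . p_2 . \cdots . p_k$ with each $p_i$ atomic (taking $k = 0$ for identity cosets, whose $\pa_p$ are the identity morphisms and hence automatically belong to the subcategory $\AD$). Because this composition is reduced, iterating the defining relation \eqref{eq:Demrelation} gives $\pa_p = \pa_{p_1} \circ \pa_{p_2} \circ \cdots \circ \pa_{p_k}$, a composition of atomic generators, so $\pa_p \in \AD$.

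For (b), by construction every morphism of $\AD$ is a $\Z$-linear combination of compositions $\pa_{q_1} \circ \cdots \circ \pa_{q_k}$ with each $q_i$ atomic. Induct on $k$ using associativity and \eqref{eq:Demrelation}: such a composition vanishes as soon as some partial composition $q_1 * \cdots * q_j$ fails to be reduced, and otherwise equals $\pa_{q_1 . q_2 . \cdots . q_k}$ when the full chain is a reduced composition. In the nonzero case, Lemma \ref{reducedcompofatomiciscore} identifies the resulting coset as core. Hence every morphism in $\AD$ lies in the $\Z$-span of $\{\pa_p : p \text{ core}\}$.

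For (c), the Remark immediately preceding the theorem records that $\{\pa_p\}$ indexed over all cosets is already a $\Z$-basis of $\Dem$, so the restriction to core cosets is automatically $\Z$-linearly independent. I do not expect any serious obstacle; the only point warranting care is in step (b), where one must be sure that iterated pairwise reducedness along the chain of atomics really does match the single hypothesis ``$q_1 * \cdots * q_k$ is reduced'' in Lemma \ref{reducedcompofatomiciscore}. This bookkeeping is handled by the induction implicit in that lemma, which itself reduces via Lemma \ref{lem:mimimi} to the two-atomic case.
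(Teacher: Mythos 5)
Your proposal is correct and follows essentially the same route as the paper: containment of the span of core $\pa_p$'s in $\AD$ via Theorem \ref{corehasatomic}, the reverse containment by observing that composing a core $\pa_p$ with an atomic generator either vanishes or yields $\pa$ of a core coset (Lemma \ref{reducedcompofatomiciscore}, ultimately Lemma \ref{lem:mimimi}), and linear independence from the fact that $\{\pa_p\}$ over all cosets is a basis of $\Dem$. The bookkeeping point you flag in (b) is genuinely resolved by the step-by-step induction, since at each stage the accumulated coset is core and the pairwise composition with the next atomic is either non-reduced (giving zero) or core again.
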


\begin{proof} Let $\AD' \subset \Dem$ be the span of $\pa_p$ as $p$ ranges over all core cosets. Note that such $\pa_p$ then form a basis of $\AD'$ since $\pa_p$, for all double cosets $p$, form a basis for $\Dem$ (see \cite[Corollary 3.20]{paper1}). We have $\AD' \subset \AD$ by Theorem \ref{corehasatomic}, since $\pa_p$, for $p$ a core coset, is a (reduced) composition of atomic generators. If further %$p$ is core and 
$q$ is atomic, then either $p*q$ is core or $p*q$ is not reduced, by Lemma \ref{reducedcompofatomiciscore}. Thus $\AD'$ is closed under composition with the atomic generators, implying that $\AD \subset \AD'$. We conclude that $\AD = \AD'$. \end{proof}

\begin{rem} While $\Dem$ is the associated graded of the linearization of $\SC$, it is not true that $\AD$ is the associated graded of the linearization of $\AC$. As noted earlier, $\AC$ may contain non-core cosets, making it bigger than $\AD$. \end{rem}

\begin{rem} \label{rem:ACblocks} In the categories $\SC$ and $\Dem$, there are morphisms from $I$ to $J$ for any finitary subsets $I, J \subset S$. In $\AC$ and $\AD$, there can only be a morphism $I \to J$ when $I$ and $J$ are conjugate. Thus the categories $\AC$ and $\AD$ split into blocks, one for each conjugacy class. \end{rem}

\section{Atomic expressions in type A}\label{sec:typeA}

In this section, we explain how to handle atomic expressions in type $A$. Through an operation called "squashing," we establish a bijection between core cosets and permutations in smaller permutation groups. This is well-illustrated in the picture in \Cref{ex:squashing}, which the reader is encouraged to keep in mind. Moreover, in \S\ref{ssec:atomicbraid}, we describe how ordinary braid relations are transformed into compositions of singular braid relations between atomic expressions.

\subsection{Special notation in type \texorpdfstring{$A$}{A}}

Let $(W,S)=(\calS_n,\{s_1,\ldots,s_{n-1}\})$ be the Coxeter group of type $A_{n-1}$. 
We view $W$ as the permutation group $\calS(X)$ on the ordered set $X = \{1, \ldots, n\}$.
The length of an element $w\in \calS_n$ is then interpreted as the number of inversions, 
\begin{equation}\label{eq:lengthA}
    \ell(w) = \# \{(i,j)\ |\ 1\leq i<j\leq n, w(i)>w(j)\}.
\end{equation}

We use the common shorthand of referring to subsets of $S$ by their indices rather than their simple reflections. Thus $J = \{2,3,6\}$ or for short $J = 236$ represents the subset $\{s_2, s_3, s_6\}$ of $S$.  An intentional side effect of this shorthand is that $J$ can also be viewed as a subset of $X$, via the mapping $s_j \mapsto j$. For example, if $J \subset S$ and $w \in \calS_n$, we can consider both $w(J)$, a subset of $X$, and $w J w^{-1}$, a set of reflections in $\calS_n$. If $J$ appears within parentheses as in $w(J)$, then it is being viewed as a subset of $X$.

% However, this shorthand has an added benefit: it identifies $S$ with a subset of $X$, via the map $s_j \mapsto j$. We can then discuss the same index in two contexts, considering both $w(j)$ and $s_j \cdot w$. We use this identification throughout this chapter.

% For example, if $J \subset S$ and $w \in \calS_n$ we can consider both $w(J)$, a subset of $X$, and $w J w^{-1}$, a set of reflections in $\calS_n$. If $J$ appears within parentheses as in $w(J)$, then it is being viewed as a subset of $X$.

\subsection{Redundancy in type A}

The redundancy of a double coset in type A has an alternative description. This is best explained in terms of the string diagram, but let us put it in words first. 

\begin{prop}\label{redA}
Let $p$ be an $(I,J)$-coset, and let $j \in J$. Then
\begin{enumerate}
    \item\label{L=p.R} $\leftred(p)=\mi{p}(\rightred(p))$;
    \item\label{RRA} 
    $j\in\rightred(p)$ if and only if $ \mi{p}(j+1)=\mi{p}(j)+1$ and $\mi{p}(j)\in I$;
    \item\label{RRAcore}Assume further that $p$ is a core coset. Then  $j\in\rightred(p)$ if and only if $\mi{p}(j+1)=\mi{p}(j)+1$.
    
\end{enumerate}
\end{prop}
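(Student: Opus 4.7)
The plan is to prove (2) directly from the definition, and then obtain (1) and (3) as short consequences.

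For (2), I would unfold the definition: $j\in\rightred(p)$ means $s_j\in J$ together with $\mi{p}s_j\mi{p}\inv\in I$. As a permutation, $\mi{p}s_j\mi{p}\inv$ is the transposition $(\mi{p}(j),\mi{p}(j+1))$, and belonging to $I$ forces it to be a simple reflection, so the two indices must be consecutive integers. Minimality of $\mi{p}$ in its $(I,J)$-coset rules out a right descent at any $s_j\in J$, so $\mi{p}(j)<\mi{p}(j+1)$. These two observations combine to give $\mi{p}(j+1)=\mi{p}(j)+1$ and $\mi{p}s_j\mi{p}\inv=s_{\mi{p}(j)}$, which lies in $I$ exactly when $\mi{p}(j)\in I$.

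For (1), the inclusion $\mi{p}(\rightred(p))\subseteq\leftred(p)$ is immediate from (2), since $s_{\mi{p}(j)}=\mi{p}s_j\mi{p}\inv$ visibly lies in $I\cap\mi{p}J\mi{p}\inv$. For the reverse inclusion I would take $i\in\leftred(p)$ and write $s_i=\mi{p}t\mi{p}\inv$ for a reflection $t\in W_J$; as a transposition $t=(a,b)$, its entries lie in a common block of $J$ viewed as a consecutive subset of $X$. Minimality of $\mi{p}$ makes it strictly increasing on each such block, so $\{\mi{p}(a),\mi{p}(b)\}=\{i,i+1\}$ forces $b=a+1$, whence $j:=a$ satisfies the criterion in (2) with $\mi{p}(j)=i$.

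For (3), I would apply (2) together with the defining properties of a core coset. We have $\rightred(p)=J$ by hypothesis, so for any $j\in J$ membership in $\rightred(p)$ is automatic; meanwhile Remark~\ref{LR=RR} identifies $(W_J,J)$ with $(W_I,I)$ via conjugation by $\mi{p}$, so the side condition $\mi{p}(j)\in I$ in (2) is also automatic, and the equivalence reduces to $\mi{p}(j+1)=\mi{p}(j)+1$.

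The delicate point is the reverse inclusion in (1): a priori one only knows that $t$ is some reflection in $W_J$, and it is the minimality of $\mi{p}$ on each block of $J$ that upgrades $t$ into an honest simple reflection $s_j$ with $s_j\in J$, giving the required preimage $j$.
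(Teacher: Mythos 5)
Your proposal is correct and follows essentially the same route as the paper: identify $\mi{p}s_j\mi{p}\inv$ with the transposition $(\mi{p}(j),\mi{p}(j+1))$, use minimality of $\mi{p}$ (no right descents in $J$) to force $\mi{p}(j+1)=\mi{p}(j)+1$, and then read off (1) and (3) as consequences. Your extra care in the reverse inclusion of (1) --- allowing $t$ to be an arbitrary reflection of $W_J$ and using that $\mi{p}$ is increasing on each block --- is slightly more than the paper spells out, but it is the same underlying argument.
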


\begin{ex} Returning to Example \ref{ex:runningredunpic}, we have $3 \in \rightred(p)$ and $2 \in \leftred(p)$. We also have $\mi{p}(3) = 2$ and $\mi{p}(4) = 3$. Now considering the coset $p^{-1}$, note that $\mi{p}^{-1}(7) = 10$ and $\mi{p}^{-1}(8) = 11$ and $7 \in I$, but $7 \notin \leftred(p) = \rightred(p^{-1})$. This is not a violation of the last statement, since $p^{-1}$ is not a core coset. \end{ex}

\begin{proof}
    Let $j\in J$. Then by definition, we have $s_j\in\rightred(p)$ if and only if $\mi{p}s_j=s_i\mi{p}$ for some $i\in I$. Notice that in this case we have $i \in \leftred(p)$.
We have
\begin{equation}\label{j+1}
    \mi{p}(j+1)=\mi{p}s_j(j)=s_i\mi{p}(j)=s_i(\mi{p}(j)).
\end{equation}
If $i\not\in\{\mi{p}(j),\mi{p}(j)-1\}$ then \eqref{j+1} becomes $\mi{p}(j+1)=\mi{p}(j)$, a contradiction since $\mi{p}$ is a permutation.
If $i = \mi{p}(j)-1$ then we have $\mi{p}(j+1)<\mi{p}(j)$, and thus $\mi{p}s_j<\mi{p}$ by \eqref{eq:lengthA}. The latter contradicts that $\mi{p}$ is minimal in $\mi{p}W_J$. This proves $i=\mi{p}(j)$, which implies $\mi{p}(j+1)=s_i(\mi{p}(j)) = \mi{p}(j) + 1$. This justifies both statement~\eqref{L=p.R} and the `only if' part of statement~\eqref{RRA}.

If $\mi{p}(j) = i$ and $\mi{p}(j+1) = i+1$ then $\mi{p} s_j = s_i \mi{p}$. From this, the `if' part of statement~\eqref{RRA} follows.

If $p$ is core, then $I=\leftred(p)$ and $J=\leftred(p)$, so $\mi{p}(j)\in I$ directly follows from  \eqref{L=p.R}. Thus statement~\eqref{RRA} can be rewritten as statement~\eqref{RRAcore}.
\end{proof}

\begin{prop}\label{[j,k]s}
    Let $p$ be an $(I,J)$-coset. Then the connected components in $\leftred(p)$ and $\rightred(p)$ are the maximal subsets 
    \[ \{s_{i},s_{i+1}\ldots, s_{i+k}\}\subset I, \qquad \{s_j,s_{j+1},\ldots,s_{j+k}\}\subset J \] where $\mi{p}$ maps $\{j,j+1,\ldots, j+k,j+k+1\}$ to $\{i,i+1,\ldots, i+k,i+k+1\}$ in order-preserving fashion.
\end{prop}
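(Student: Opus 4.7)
The plan is to iterate Proposition \ref{redA}, which already gives a pointwise description of $\rightred(p)$, to read off how a whole consecutive block of elements of $\rightred(p)$ constrains $\mi{p}$. First I would fix a connected component $C = \{s_j, s_{j+1}, \ldots, s_{j+k}\}$ of $\rightred(p)$. By part~\eqref{RRA} of Proposition \ref{redA}, each $j+m \in \rightred(p)$ (for $0 \le m \le k$) yields $\mi{p}(j+m+1) = \mi{p}(j+m) + 1$. Writing $i := \mi{p}(j)$, a short induction then gives $\mi{p}(j+m) = i+m$ for all $m = 0, 1, \ldots, k+1$, which is precisely the order-preserving mapping of $\{j, \ldots, j+k+1\}$ onto $\{i, \ldots, i+k+1\}$ asserted in the statement.

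Next I would transfer to the left-hand side: part~\eqref{L=p.R} of Proposition \ref{redA} gives $\leftred(p) = \mi{p}(\rightred(p))$, so the image $\{i, i+1, \ldots, i+k\}$ of $\{j, j+1, \ldots, j+k\}$ is a subset of $\leftred(p)$, i.e., $\{s_i, s_{i+1}, \ldots, s_{i+k}\} \subset \leftred(p)$. To see that this is a full connected component of $\leftred(p)$, suppose for contradiction that $s_{i+k+1} \in \leftred(p)$. Then $i+k+1 = \mi{p}(l)$ for some $l \in \rightred(p)$; but $\mi{p}$ is a bijection and $\mi{p}(j+k+1) = i+k+1$ already, so $l = j+k+1 \in \rightred(p)$, contradicting the maximality of $C$. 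The symmetric argument on the other end rules out $s_{i-1} \in \leftred(p)$.

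Finally, to see that every connected component of $\leftred(p)$ arises this way, I would simply apply the first two paragraphs to the inverse coset $p^{-1}$, noting that $\mi{p^{-1}} = \mi{p}^{-1}$, $\leftred(p^{-1}) = \rightred(p)$, and $\rightred(p^{-1}) = \leftred(p)$. Starting from a component of $\leftred(p) = \rightred(p^{-1})$ and running the preceding analysis for $p^{-1}$ produces a matched component of $\rightred(p) = \leftred(p^{-1})$, with $\mi{p}^{-1}$ order-preserving between the relevant intervals, which after inverting gives the claim for $\mi{p}$. The main bookkeeping obstacle throughout is the conflation, built into the notational conventions of \S\ref{sec:typeA}, between subsets of $X = \{1, \ldots, n\}$ and subsets of $S$: one must stay disciplined about when $\mi{p}(j)$ is regarded as an element of $X$ and when as the index of a simple reflection $s_{\mi{p}(j)} \in \leftred(p) \subset S$.
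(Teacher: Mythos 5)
Your proposal is correct and follows essentially the same route as the paper: the paper's proof is precisely the observation that iterating Proposition~\ref{redA}\eqref{RRA} forces $\mi{p}(j+m)=\mi{p}(j)+m$ along a component, and your additional bookkeeping (transferring to $\leftred(p)$ via Proposition~\ref{redA}\eqref{L=p.R} and handling maximality via injectivity of $\mi{p}$ and the inverse coset) just spells out details the paper leaves as "easy."
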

\begin{proof}
    This follows easily from the characterization in Proposition~\ref{redA}\eqref{RRA}. For example, if $s_j,s_{j+1}\in\rightred(p)$ then $\mi{p}(j+2)=\mi{p}(j+1)+1=\mi{p}(j)+2$. 
\end{proof}

\subsection{Squashing}\label{s:w(p)}

 As is well-known, parabolic subgroups of $\calS_n$ have the form $\calS_{n_1} \times \cdots \times \calS_{n_k}$ with $$\sum_{c=1}^k n_c = n.$$ Thus there is a bijection between subsets of $S$ and contiguous set partitions of $X = \{1, \ldots, n\}$.

More pedantically, given $J \subset S$ with parabolic subgroup as above, let $\sim_J$ be the equivalence relation on $X$ generated by the relation $j \sim_J (j+1)$ whenever $s_j \in J$. The number of equivalence classes (denoted $k$ above) is $n - \# J$. The equivalence classes will be denoted  $C_c$ or $C_c^{(J)}$ for $1 \le c \le k$. Thus $C_1 = \{1, \ldots, n_1\}$. Let $X/\sim_J$ denote the set of equivalence classes under $\sim_J$. Each $C_c$ inherits an order from $X$, and the set $X/\sim_J$ itself inherits an order from $X$.

\begin{ex}\label{ex:CJ} Let $n=7$ and $J = \{s_2,s_3,s_6\}$. Then $X/\sim_J = \{C_1, C_2, C_3, C_4\}$ where
\[ C_1 = \{1\} \;\;  < \;\;  C_2 = \{2 < 3 < 4\} \;\;  < \;\; C_3 = \{5\} \;\;  <  \;\; C_4 = \{6 < 7\}. \] \end{ex}

Fixing a number $m$ and only considering subsets $J \subset S$ of size $m$, we identify $X/\sim_J$ as an ordered set with $Y_{n-m}: = \{1 < 2 < \ldots < n- m\}$.

\begin{defn}
    Fix $I$ and $J$ of size $m$. Suppose $y$ is a permutation of $X$ which sends each component $C_c^{(J)}$ via an order-preserving bijection onto a component $C_{c'}^{(I)}$. We say that $y$ is a \emph{block permutation} relative to $I$ and $J$. Clearly $y$ induces a bijection $X/\sim_J \to X/\sim_I$, which we can view as a permutation of $Y_{n-m}$. This permutation we denote $\squish(y) \in \calS(Y_{n-m}) \cong \calS_{n-m}$, and call it the \emph{squashed permutation} of $y$.
\end{defn}

\begin{lem} \label{lem:squishhomom} If $y$ is a block permutation relative to $I$ and $J$, and $z$ is a block permutation relative to $J$ and $K$, then $yz$ is a block permutation relative to $I$ and $K$. Moreover, if $m$ is the size of $I$ (and $J$ and $K$), then $\squish(y) \squish(z) = \squish(yz)$ within $\calS(Y_{n-m})$. \end{lem}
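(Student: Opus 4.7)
The plan is to verify both assertions by chasing through the quotient structure of $X$ modulo the equivalence relations $\sim_I$, $\sim_J$, $\sim_K$. For the first statement, note that by assumption $z$ restricts to an order-preserving bijection $C_c^{(K)} \to C_{c'}^{(J)}$ for some $c' = c'(c)$, and then $y$ restricts to an order-preserving bijection $C_{c'}^{(J)} \to C_{c''}^{(I)}$ for some $c'' = c''(c')$. Composing, $yz$ restricts to an order-preserving bijection $C_c^{(K)} \to C_{c''}^{(I)}$. Since every element of $X$ lies in a unique $K$-component, this shows $yz$ is a block permutation relative to $I$ and $K$.

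For the second statement, let $\pi_I \co X \to X/\sim_I$, $\pi_J \co X \to X/\sim_J$, $\pi_K \co X \to X/\sim_K$ denote the quotient maps. The block permutation condition on $y$ is precisely the statement that $\pi_I \circ y$ is constant on each $\sim_J$-class, hence factors uniquely through $\pi_J$ to give a map $\bar{y} \co X/\sim_J \to X/\sim_I$ satisfying $\pi_I \circ y = \bar{y} \circ \pi_J$. Moreover, the order-preserving condition means that under the identifications of $X/\sim_J$ and $X/\sim_I$ with $Y_{n-m}$ via the unique order-preserving bijections, $\bar{y}$ is precisely $\squish(y)$. Similarly $\bar{z}$, viewed as a permutation of $Y_{n-m}$, is $\squish(z)$, and $\overline{yz} = \squish(yz)$.

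It remains to check $\overline{yz} = \bar{y} \circ \bar{z}$, which is a formal computation:
\begin{equation}
\overline{yz} \circ \pi_K = \pi_I \circ (yz) = (\pi_I \circ y) \circ z = \bar{y} \circ \pi_J \circ z = \bar{y} \circ \bar{z} \circ \pi_K.
\end{equation}
Since $\pi_K$ is surjective, this forces $\overline{yz} = \bar{y} \circ \bar{z}$, which translates to $\squish(yz) = \squish(y)\squish(z)$ in $\calS(Y_{n-m})$.

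There is no real obstacle; the argument is essentially the universal property of quotient maps together with the observation that the block/order-preserving hypothesis is exactly what is needed for $y$ and $z$ to descend to maps on the quotients in a manner compatible with the order-preserving identifications $X/\sim_{(-)} \cong Y_{n-m}$. The only point requiring any care is that the identifications depend on $I$, $J$, $K$ separately, but since each is defined as the unique order-preserving bijection to $Y_{n-m}$, and the induced maps $\bar{y}$, $\bar{z}$ are themselves order-preserving on the quotients, these identifications are compatible by construction.
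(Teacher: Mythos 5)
Your argument is correct, and it elaborates on something the paper itself dismisses with ``This is obvious'': the first claim follows by composing the order-preserving bijections between components, and the second by the universal property of quotient maps, exactly as you write. One sentence in your closing remark is inaccurate, though: the induced maps $\bar{y}$ and $\bar{z}$ on the quotients are \emph{not} order-preserving in general --- they are arbitrary bijections $X/\sim_J \to X/\sim_I$, and indeed $\squish(y)$ is typically a nontrivial permutation of $Y_{n-m}$, so it cannot be order-preserving; what is order-preserving is only the restriction of $y$ to each individual component. Fortunately this claim is not needed: the identifications are compatible simply because the same order-preserving bijection $X/\sim_J \cong Y_{n-m}$ is used both as the target identification for $\bar{z}$ and as the source identification for $\bar{y}$, so writing $\phi_I, \phi_J, \phi_K$ for these identifications one has $\squish(y)\squish(z) = \phi_I\bar{y}\phi_J^{-1}\phi_J\bar{z}\phi_K^{-1} = \phi_I\,\overline{yz}\,\phi_K^{-1} = \squish(yz)$. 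With that sentence corrected or deleted, the proof stands.
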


\begin{proof} This is obvious. \end{proof}

\begin{defn} \label{defn:squashcoset}
Let $p$ be a core $(I,J)$ coset. By Proposition \ref{[j,k]s}, $\mi{p}$ is a block permutation relative to $I$ and $J$. We call $\squish(p) := \squish(\mi{p})$ the \emph{squashed permutation} of $p$. 
\end{defn}

We refer to \Cref{ex:squash} for an illustration of Definition \ref{defn:squashcoset}.

\begin{thm}\label{thm.psi} Fix $J \subset S$ of size $m$. Then $\squish$ induces a bijection 
\begin{equation} \squish^J \co \{(I,p) \mid p \text{ is a core $(I,J)$-coset} \} \to \calS(Y_{n-m}). \end{equation}
\end{thm}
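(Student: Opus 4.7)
The plan is to exhibit an explicit two-sided inverse to $\squish^J$. Well-definedness comes essentially for free from the preceding proposition: for a core $(I,J)$-coset $p$ we have $I = \leftred(p)$ and $J = \rightred(p)$, so \Cref{[j,k]s} guarantees that $\mi{p}$ restricts to an order-preserving bijection from each $C_c^{(J)}$ onto some $C_{c'}^{(I)}$. Thus $\mi{p}$ is a block permutation relative to $I$ and $J$, and $\squish^J(I,p) := \squish(\mi{p})$ lies in $\calS(Y_{n-m})$.

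To construct the inverse, start with $\sigma \in \calS(Y_{n-m})$. Record the sizes $(n_1,\ldots,n_{n-m})$ of the components of $\sim_J$ in order. Define $I\subset S$ to be the unique subset of size $m$ whose parabolic factors have sizes $(n_{\sigma\inv(1)},\ldots,n_{\sigma\inv(n-m)})$ in order; this is exactly the sequence of sizes forced on $\sim_I$ by the existence of an order-preserving block bijection in the pattern $\sigma$. Let $w\in \calS_n$ be the unique permutation sending $C_c^{(J)}$ order-preservingly onto $C_{\sigma(c)}^{(I)}$ for every $c$. I will set $(I, p) := (I, W_I w W_J)$ and show that this gives the inverse, provided that $w = \mi{p}$ and $p$ is core.

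Three short verifications will finish the proof. First, order-preservation of $w$ within each $C_c^{(J)}$ yields $\rightdes(w) \cap J = \emptyset$, and order-preservation of $w\inv$ within each $C_{c'}^{(I)}$ yields $\leftdes(w)\cap I = \emptyset$; hence $w = \mi{p}$. Second, for any $s_j\in J$ we have $j\sim_J j+1$, so $w(j+1) = w(j)+1$ and $w(j)\in I$, giving $ws_jw\inv = s_{w(j)}\in I$; by cardinality $wJw\inv = I$, which is the core condition. Third, the constructions are mutually inverse by inspection, as both $\squish^J$ and the inverse are governed by the same order-preserving block data; in particular, injectivity follows because $I$ is forced by the sizes $(n_{\sigma\inv(c)})$ and then $w=\mi{p}$ is forced by the order-preserving block bijection.

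The main obstacle, such as it is, is the bookkeeping around how the component sizes of $\sim_I$ and $\sim_J$ are reshuffled by $\sigma$, and seeing that the ``inverse permutation shuffle'' on sizes is the right one so that the blocks can be matched up in an order-preserving way. Once \Cref{[j,k]s} reduces the problem to block permutations, the remaining content is the standard Coxeter-combinatorial observation that an order-preserving permutation of blocks has no descents inside the blocks, so automatically gives a minimal double coset representative.
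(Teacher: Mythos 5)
Your proof is correct and follows essentially the same route as the paper: the paper establishes injectivity by noting that $\mi{p}$ is determined by $\squish(\mi{p})$ (via Proposition~\ref{[j,k]s}) and surjectivity by lifting $\sigma$ to a block permutation $y$ with $I = y(J)$, which is exactly your explicit inverse. You merely spell out the details the paper leaves implicit (minimality of $w$ via the descent computation, and coreness via $wJw^{-1}=I$), which is fine.
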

\begin{proof}
By Proposition \ref{[j,k]s}, the minimal element $\mi{p}$ of a core coset is determined by its induced permutation of $X/\sim_J$, i.e. by $\squish(\mi{p})$. Moreover, $I$ and $p$ are determined by $\mi{p}$, since $I = \mi{p} J \mi{p}^{-1}$. Thus $\squish^J$ is injective.

Surjectivity of $\squish^J$ follows because any permutation in $\calS(Y_{n-m})$ lifts to a block permutation $y$ of $\calS(X)$ relative to $J$. Setting $I = y(J)$, $y$ is the minimal element of a core $(I,J)$-coset.
\end{proof}

\begin{prop}\label{monoidmorphism?}
    If $p=p'.p''$ and $p,p',p''$ are core cosets in $\calS_n$, then we have $\squish(p)=\squish(p').\squish(p'')$. 
\end{prop}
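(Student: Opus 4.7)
The plan is to combine Lemma~\ref{lem:mimimi} and Lemma~\ref{lem:squishhomom} to get the set-theoretic identity $\squish(p) = \squish(p')\squish(p'')$, and then verify reducedness via a length calculation indexed by block-pairs.

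First I apply Lemma~\ref{lem:mimimi} to the reduced composition $p = p'.p''$ of core cosets, obtaining $\mi{p} = \mi{p'}.\mi{p''}$ reduced in $\calS_n$. Since $\mi{p'}$ and $\mi{p''}$ are block permutations (Proposition~\ref{[j,k]s}), Lemma~\ref{lem:squishhomom} yields $\squish(\mi{p}) = \squish(\mi{p'})\squish(\mi{p''})$; by Definition~\ref{defn:squashcoset} this reads $\squish(p) = \squish(p')\squish(p'')$ as elements of $\calS(Y_{n-m})$. It remains only to show that this composition is reduced, i.e.\ that $\ell(\squish(p)) = \ell(\squish(p')) + \ell(\squish(p''))$.

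For this I express all relevant lengths as sums indexed by unordered pairs of $K$-blocks. If $K$ has $l$ blocks of sizes $k_1,\ldots,k_l$, then since each $\mi{p^{\bullet}}$ is order-preserving on blocks, every inversion of $\mi{p^{\bullet}}$ comes from a pair $(j,j')$ lying in two distinct $K$-blocks, contributing $k_a k_b$ to the length from the $K$-block pair $C_a^{(K)} < C_b^{(K)}$ precisely when that pair is swapped by the relevant squashed permutation. Using that $\squish(p'')$ is a size-preserving bijection from $K$-blocks to $J$-blocks (because $p''$ is core, so $k_a = m_{\squish(p'')(a)}$), I reindex $\ell(\mi{p'})$ along this bijection, after which a four-case analysis on whether $\squish(p)$ and $\squish(p'')$ each swap a given pair $a<b$ yields
\[ \ell(\mi{p'}) + \ell(\mi{p''}) - \ell(\mi{p}) = 2 \sum_{(a,b) \in B} k_a k_b, \]
where $B = \{(a,b) : a < b,\ \squish(p'')(a) > \squish(p'')(b),\ \squish(p)(a) < \squish(p)(b)\}$. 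The identical case analysis carried out inside $\calS_l$ (i.e.\ with all $k_a = 1$) gives
\[ \ell(\squish(p')) + \ell(\squish(p'')) - \ell(\squish(p)) = 2\cdot\#B. \]
Since $k_a k_b > 0$, reducedness of $\mi{p} = \mi{p'}.\mi{p''}$ forces $B = \emptyset$, which in turn forces reducedness of $\squish(p) = \squish(p')\squish(p'')$, as desired.

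The only real bookkeeping hurdle is reindexing the sum for $\ell(\mi{p'})$ so that it ranges over $K$-block pairs rather than $J$-block pairs; this is precisely what makes the two length differences visibly controlled by the same set $B$, and the implication between the two reducedness statements becomes immediate.
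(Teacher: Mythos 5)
Your proof is correct, and its first half (Lemma~\ref{lem:mimimi} to get $\mi{p}=\mi{p'}.\mi{p''}$, then Lemma~\ref{lem:squishhomom} to get the equality of permutations) is exactly the paper's argument. Where you genuinely add something is the reducedness of the composition $\squish(p')\squish(p'')$: the paper's proof stops at the homomorphism property and leaves the length-additivity implicit, even though the dot in the statement (and the later use in Theorem~\ref{thm.atomicrex}) requires it. Your block-pair bookkeeping is sound: since block permutations are order-preserving on blocks, all inversions live between distinct $K$-blocks, the reindexing of $\ell(\mi{p'})$ along the size-preserving bijection $\squish(p'')$ is legitimate, and the four-case analysis correctly isolates the defect set $B$ so that $\ell(\mi{p'})+\ell(\mi{p''})-\ell(\mi{p})=2\sum_{(a,b)\in B}k_ak_b$ while $\ell(\squish(p'))+\ell(\squish(p''))-\ell(\squish(p))=2\,\#B$; positivity of $k_ak_b$ then transfers reducedness downstairs. (A marginally slicker route to the same end: $uv$ is reduced iff there is no pair $x<y$ with $v(x)>v(y)$ and $uv(x)<uv(y)$, and any such pair at the block level would produce one at the level of $X$; but your length computation is equivalent and fully rigorous.)
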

\begin{proof}
    First we note that the assumption implies that the subsets 
    \[ \rightred(p),\leftred(p),\rightred(p'), \leftred(p'),\rightred(p''),\leftred(p'') \] all have the same size, say $m$. The equality $\squish(p)=\squish(p').\squish(p'')$ makes sense in $\calS(Y_{n-m})$. Now, if $p,p',p''$ are core cosets, then $p=p'.p''$ implies
    \[\mi{p}=\mi{p'}.\mi{p''}\]
    by \Cref{lem:mimimi}, and thus the claim follows from Lemma~\ref{lem:squishhomom}.
\end{proof}

Let us continue to fix $m$, and view $\calS(Y_{n-m})$ as a Coxeter group, with simple reflections $t_j$ which transpose $j$ and $j+1$.

\begin{prop} \label{prop:atomictosimple} If $p$ is an atomic coset in $\calS_n$, then $\squish(p)$ is a simple reflection. If $J$ is fixed, then $\squish^J$ induces a bijection between atomic $(I,J)$-cosets (as $I$ varies) and simple reflections in $\calS(Y_{n-\# J})$. \end{prop}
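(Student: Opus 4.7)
The plan is a direct calculation starting from the unique reduced expression of an atomic coset. First I would unpack the data: by Lemma~\ref{lem:atomic}, an atomic $(I,J)$-coset $p$ has a unique reduced expression $[I+s-t]=[I,M,J]$ where $M=Is=Jt$ is finitary and $w_M t w_M = s$. Since $p$ is a core coset, Lemma~\ref{lem:coreiffminp} gives $\ma{p}=w_M$ and hence $\mi{p}=w_M w_J$. The goal becomes to understand the effect of $\mi{p}$ on the $\sim_J$-components of $X$.

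Next I would translate $M=Jt$ into partition language. Writing $t=s_k\in S\setminus J$, the boundary between the two consecutive $J$-components containing $k$ and $k+1$, call them $C_c^J$ and $C_{c+1}^J$, becomes internal in $M$, so $M$ has the same partition as $J$ except that these two adjacent blocks are merged into one $M$-component $D=C_c^J\sqcup C_{c+1}^J$. The core computation I would then carry out is to evaluate $\mi{p}=w_M w_J$ component by component. On any $J$-component that is unchanged in $M$, the reversals coming from $w_J$ and $w_M$ cancel, so $\mi{p}$ fixes that component pointwise. Setting $a=|C_c^J|$ and $b=|C_{c+1}^J|$, a direct index check on $D$ shows that $w_M w_J$ shifts $C_c^J$ rightwards by $b$ and $C_{c+1}^J$ leftwards by $a$; thus $\mi{p}(C_c^J)$ consists of the last $a$ elements of $D$ and $\mi{p}(C_{c+1}^J)$ of the first $b$ elements. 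By Proposition~\ref{[j,k]s} these are precisely the two $I$-components splitting $D$ (playing the roles of $C_{c+1}^I$ and $C_c^I$ respectively). Consequently the permutation of $X/\sim_J \cong Y_{n-m}$ induced by $\mi{p}$ swaps positions $c$ and $c+1$ and fixes the rest, so $\squish(p)=t_c$ is a simple reflection of $\calS(Y_{n-m})$.

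For the bijection statement, I would observe that in type $A$ every subset of $S$ is finitary, so atomic $(I,J)$-cosets as $I$ varies are parametrized by the choice of $t\in S\setminus J$. This set has $n-m-1$ elements, in natural bijection with the gaps between consecutive $J$-components and hence with the simple reflections $t_1,\ldots,t_{n-m-1}$ of $\calS(Y_{n-m})$. The map $p\mapsto\squish^J(p)=t_c$ established in the previous paragraph realizes exactly this bijection, and injectivity on the larger set of core cosets is already part of Theorem~\ref{thm.psi}. I do not anticipate any serious obstacle: the only nontrivial step is the index bookkeeping when computing $w_M w_J$ restricted to $D$, which is routine once the partitions are in place.
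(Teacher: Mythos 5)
Your proposal is correct and follows essentially the same route as the paper: identify the unique reduced expression $[I,M,J]$ with $\ma{p}=w_M$, observe that $\sim_M$ merges two adjacent $\sim_J$-blocks, and check that $\mi{p}=w_Mw_J$ swaps those two blocks while fixing the others, so $\squish(p)$ is the corresponding simple transposition; the bijection then comes from parametrizing atomic $(I,J)$-cosets by $t\in S\setminus J$. The only difference is that you spell out the index computation for $w_Mw_J$ on the merged block, which the paper dismisses as "straightforward to deduce."
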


\begin{proof} Let $p$ be an atomic $(I,J)$ coset. Then there are simple reflections $s$ and $t$ such that $Is = Jt = M$ and $p \expr [I,M,J]$. Moreover, $s$ and $t$ are in the same component of $M$.

The difference between $\sim_J$ and $\sim_M$ is that two adjacent components are merged into one. More precisely, if $t$ is the $r$th simple reflection in $S\setminus J$, then 
 $C_r, C_{r+1} \in X/\sim_J$ are merged into $C_r \cup C_{r+1} \in X/\sim_M$. We know that $\ma{p} = w_M$. It is straightforward to deduce that $\mi{p}$ is the block permutation which swaps $C_r$ and $C_{r+1}$, so that $\squish(p) = t_r$. See Example \ref{ex:typeAatomic} for some illustrations.

For each simple reflection $t \in S \setminus J$, letting $M = Jt$ and $s = w_M t w_M$, there is exactly one atomic coset with reduced expression $[M \setminus s,M,J]$. %The number of such atomic cosets equals the number of simple reflections in $\calS(Y_{\# J})$. 
The injectivity and surjectivity of $\squish^J$ (as a map between atomic cosets and simple reflections) is obvious.\end{proof}

\begin{thm}\label{thm.atomicrex} Let $p$ be a core $(I,J)$-coset. Then $\squish$ induces a bijection
\[\{ \text{atomic}  \text{ reduced expressions for $p$} \} \to \{\text{ordinary reduced expressions for }\squish(p) \}. \] \end{thm}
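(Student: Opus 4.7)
By \Cref{cor:coresplitsintoatomic}, an atomic reduced expression for $p$ amounts to a reduced composition $p = p_1 . p_2 . \cdots . p_k$ of atomic cosets, where $p_i$ is an atomic $(I_{i-1}, I_i)$-coset with $I_0 = I$ and $I_k = J$. By \Cref{prop:atomictosimple}, squashing sends each $p_i$ to a simple reflection $t_{r_i} \in \calS(Y_{n-m})$, so sends the atomic expression to the word $(t_{r_1}, \ldots, t_{r_k})$. The plan is to prove a bridge lemma comparing reducedness in $\SC$ and in $\calS(Y_{n-m})$, and use it to establish both directions of the bijection.

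The key lemma I would prove is: for core cosets $q, r$ in $\calS_n$ with $\rightred(q) = \leftred(r)$, the composition $q * r$ is reduced in $\SC$ if and only if $\squish(q)\,\squish(r)$ is a reduced composition in $\calS(Y_{n-m})$. Its proof uses \Cref{lem:mimimi} to translate reducedness of $q * r$ into $\mi{q} . \mi{r}$ being a reduced composition in $\calS_n$, and \Cref{[j,k]s} to see that $\mi{q}$ and $\mi{r}$ are block permutations. For a block permutation $z$ with squash $\sigma$ and source blocks $C_c$, a direct inversion count yields the formula
\[ \ell(z) = \sum_{c < d,\, \sigma(c) > \sigma(d)} |C_c|\,|C_d|. \]
Applying this to $\mi{q}$, $\mi{r}$, and $\mi{q}\mi{r}$ (also a block permutation, by \Cref{lem:squishhomom}), one finds that $\ell(\mi{q}) + \ell(\mi{r}) - \ell(\mi{q}\mi{r})$ equals a positively weighted sum $\sum_{(a,b) \in \mathcal{C}} |D_a|\,|D_b|$ over a certain set $\mathcal{C}$ of pairs of source blocks of $r$, while $\ell(\squish(q)) + \ell(\squish(r)) - \ell(\squish(q)\,\squish(r))$ equals the unweighted cardinality $|\mathcal{C}|$. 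Since block sizes are positive, both defects vanish simultaneously.

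Granted the key lemma, well-definedness follows by iterating it on $p = p_1 . p_2 . \cdots . p_k$ (intermediate composites are core by \Cref{reducedcompofatomiciscore}), giving $\squish(p) = t_{r_1} . t_{r_2} . \cdots . t_{r_k}$ as a reduced composition, hence an ordinary reduced expression for $\squish(p)$. For the inverse, given a reduced expression $(t_{r_1}, \ldots, t_{r_k})$ for $\squish(p)$, I would build atomic cosets inductively: set $I_0 := I$, and let $p_i$ be the unique atomic coset with $\leftred(p_i) = I_{i-1}$ and $\squish(p_i) = t_{r_i}$---existence and uniqueness coming from the left-type-fixed analogue of \Cref{prop:atomictosimple}, obtained by applying that proposition to $p_i^{-1}$ via \Cref{lem:reverseword}---and set $I_i := \rightred(p_i)$. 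By induction, $q_i := p_1 . \cdots . p_{i-1}$ is a reduced composition (core by \Cref{reducedcompofatomiciscore}) with $\squish(q_i) = t_{r_1}\cdots t_{r_{i-1}}$; since $\squish(q_i)\,t_{r_i}$ is reduced as a prefix of a reduced expression, the key lemma gives $q_i * p_i$ reduced. Thus $p_1 . p_2 . \cdots . p_k$ is a reduced composition, and by iterating \Cref{lem:squishhomom} the product $\mi{p_1}\mi{p_2}\cdots\mi{p_k}$ is a block permutation with left-type $I$ and squash $\squish(p)$; such a block permutation is uniquely determined by its left-type and squash (matching source block sizes to target block sizes via the squashed permutation), so it equals $\mi{p}$, giving $p_1 . p_2 . \cdots . p_k = p$. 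Since each $p_i$ is uniquely determined by $I_{i-1}$ and $t_{r_i}$, this construction is visibly inverse to squashing as a map of data, yielding the bijection. The main obstacle is the key lemma, specifically the inversion-count identity for block permutations and the matching between ``bad pairs'' in $\calS_n$ and $\calS(Y_{n-m})$.
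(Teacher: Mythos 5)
Your proposal is correct and follows the same overall strategy as the paper: identify atomic reduced expressions with reduced compositions of atomic cosets, squash atomic cosets to simple reflections (\Cref{prop:atomictosimple}), and transfer reducedness back and forth between $\calS_n$ and $\calS(Y_{n-m})$. The difference lies in how reducedness is transferred. The paper handles the forward direction via \Cref{monoidmorphism?} (which rests on \Cref{lem:mimimi} and the multiplicativity of $\squish$ on block permutations), and disposes of the reverse direction --- that the lift of an ordinary reduced expression is again a reduced composition --- with a bare citation of \Cref{lem:mimimi}. You instead prove a single two-way bridge lemma: for core cosets $q,r$, the composition $q*r$ is reduced if and only if $\squish(q)\squish(r)$ is length-additive, via the inversion count $\ell(z)=\sum_{c<d,\,\sigma(c)>\sigma(d)}|C_c|\,|C_d|$ for a block permutation $z$ with squash $\sigma$. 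This is a genuine and welcome addition: \Cref{lem:mimimi} only converts reducedness of $q*r$ into length-additivity of $\mi{q}\mi{r}$ in $\calS_n$, and some argument like your weighted-versus-unweighted defect comparison is needed to relate that to length-additivity in the squashed group; the paper leaves this step implicit. Your explicit inverse construction (anchoring at $I$, lifting each simple reflection to the unique atomic coset with prescribed left type via \Cref{lem:reverseword}, and invoking uniqueness of a block permutation given its target blocks and squash) likewise makes bijectivity self-contained, where the paper appeals to the injectivity of $\squish^J$ from \Cref{thm.psi}. Minor bookkeeping aside --- the two defects are $2\sum_{(a,b)\in\mathcal{C}}|D_a|\,|D_b|$ and $2|\mathcal{C}|$, which of course vanish together --- the argument is sound.
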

% \begin{thm}\label{thm.atomicrex}
% Moreover, $\psi$ induces a bijection
% \begin{align} \{ \text{atomic} & \text{ reduced expressions for core $(I,J)$-cosets as $I$ varies } \} \to \\ &\{\text{ordinary reduced expressions for elements of } \calS(Y_{n-m}) \}. \end{align} \end{thm}

\begin{proof} 

%y Proposition \ref{atomatom}, any atomic reduced expression is the reduced expression for a core coset. 
By Proposition \ref{monoidmorphism?}, reduced compositions for core cosets are sent to reduced compositions, and by Proposition \ref{prop:atomictosimple}, atomic cosets are sent to simple reflections. Thus atomic reduced expressions for $p$ are sent to ordinary reduced expressions for $\squish(p)$. Injectivity of this map on reduced expressions follows from injectivity of squashing (\Cref{thm.psi}). That there is an atomic expression lifting any ordinary expression follows from the surjectivity in Proposition \ref{prop:atomictosimple}, and that it is a reduced atomic expression follows from Lemma \ref{lem:mimimi}.
\end{proof}

\subsection{Atomic braid relations in type \texorpdfstring{$A$}{A}} \label{ssec:atomicbraid}

Let $J\subset S$ and $\ell=\#(S\setminus J)$, and label $S \setminus J = \{d_1, \ldots, d_{\ell}\}$ in increasing order. For $s = s_{d_i} \in S\setminus J$ we write $\at^J_i$ for the atomic coset expressed by $[Js\setminus t,Js,J]$, where $t = w_{Js}sw_{Js}$. Then the atomic cosets ending in $J$ are precisely $\{\at^J_1, \ldots, \at^J_{\ell}\}$.

% In fact, by Lemma~\ref{lem:atomic}, such atomic cosets are exactly $\at^J_i \expr [Js\setminus t,Js,J]$ for $s = s_{d_i} \in S\setminus J$ and $t = w_{Js}sw_{Js}$. When $J$ is understood we omit the superscript and write $\at_i$.

When $J$ is understood we omit the superscript and write $\at_i$. We denote by $\at_{i_d}*\cdots * \at^{J_0}_{i_0}$
the composition $\at^{J_d}_{i_d}*\cdots *\at^{J_0}_{i_0}$ where $J_d,\cdots, J_0$ is the unique sequence of superscripts such that the composition is well-defined.
We also suppress the superscript $J_0$ when any choice will do, as we do in the following lemma.

\begin{lem}\label{m3a}
The composition $\at_i*\at_{i\pm 1}*\at_i$ is reduced and thus core. Moreover, we have
\begin{equation}\label{eq:m3a}
\begin{alignedat}{2}
    \at_i.\at_{i+1}.\at_i &\quad \ \expr &&[I+s_a-s_b+s_c-s_d+s_b-s_e]\\
    &\underset{\text{switchback}}{\expr} &&[I+s_a +s_c -s_d -s_e ]\\
    &\overset{\text{downdown}}{\underset{\text{upup}}{\expr}} &&[I+s_c +s_a -s_e -s_d]\\
    &\underset{\text{switchback}}{\expr} &&[I+s_c-s_f+s_a-s_e+s_f-s_d] \expr \at_{i+1}.\at_i.\at_{i+1}.
    \end{alignedat}
\end{equation}
Here the middle $`\expr$' is the composition of two commuting braid relations, and $I,a,b,c,d,e,f$ are determined by $i$ and the hidden superscript $J_0$.
\end{lem}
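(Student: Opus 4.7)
The plan is to split the proof into two parts: that the composition is reduced and hence core, and that the chain \eqref{eq:m3a} of double-coset braid relations holds.

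For the first part, I squash. By \Cref{prop:atomictosimple}, each atomic coset $\at^{K}_j$ squashes to the simple reflection $t_j$ in the smaller symmetric group $\calS(Y_{n - \# K})$. Fixing the source $J_0$, the composition $\at_i * \at_{i\pm 1} * \at_i$ (with the remaining superscripts forced by compatibility) is by construction the lift of the length-$3$ ordinary reduced expression $t_i t_{i \pm 1} t_i$ in the smaller symmetric group. The surjectivity argument in the proof of \Cref{thm.atomicrex}, which rests on \Cref{lem:mimimi}, shows that any such lift is automatically a reduced atomic expression for a core coset. Hence the composition is reduced, and \Cref{reducedcompofatomiciscore} gives coreness.

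For the second part, I unwind the atomic definitions. Writing $\at^{J_2}_i \expr [J_2 s_a \setminus s_b,\, J_2 s_a,\, J_2]$, $\at^{J_1}_{i+1} \expr [J_1 s_c \setminus s_d,\, J_1 s_c,\, J_1]$, and $\at^{J_0}_i \expr [J_0 s_b \setminus s_e,\, J_0 s_b,\, J_0]$, concatenation yields the first line of \eqref{eq:m3a}, and reading the chain in reverse identifies the last line as $\at_{i+1} . \at_i . \at_{i+1}$. The second line follows by applying the switchback relation from \cite[Section 6]{EKo} to the inner segment $-s_b + s_c - s_d + s_b$, in which $s_b$ is removed and later re-added; the collapse exposes the larger subset $I \cup \{s_a, s_c\}$ reached by the multi-step expression on the right. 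The third line uses two commuting braid relations, \emph{upup} and \emph{downdown}, which apply because (by \Cref{[j,k]s} applied to the block structure of $J_0, J_1, J_2$) $s_a$ and $s_c$ lie in distinct non-adjacent components of $I \cup \{s_a, s_c\}$, and symmetrically for $s_d$ and $s_e$. The fourth line is switchback in reverse, introducing the new pivot $s_f$ and producing the atomic form.

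The main obstacle is the combinatorial bookkeeping: identifying $s_a,\ldots,s_f$ precisely as functions of $i$ and $J_0$ (for example $s_b = w_{J_2 s_a} s_a w_{J_2 s_a}$, and the index $i$ of the outer atomic is chosen so that the reflection added is again $s_b$), and verifying the hypotheses of each invoked relation (notably the condition $t \ne w_M s w_M$ defining the switchback setting). In type $A$ these all reduce to statements about the block structure, controlled by \Cref{[j,k]s} together with the explicit description of atomic cosets; once this bookkeeping is in place, each step of \eqref{eq:m3a} becomes a direct application of a known relation. The case $\at_i . \at_{i-1} . \at_i$ is handled by a symmetric argument.
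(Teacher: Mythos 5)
Your strategy for the chain \eqref{eq:m3a} is the same as the paper's: expand the three atomic cosets into a six-step singular expression, collapse the middle with a switchback relation, swap the two additions and the two subtractions, and re-expand with a second switchback. Your argument for reducedness takes a different (and valid, non-circular) route: the paper deduces reducedness at the end, from the fact that the middle expressions are of the form $[[I\subset K\supset J]]$ and hence automatically reduced, while you deduce it up front by observing that $\at_i*\at_{i\pm1}*\at_i$ is the lift under $\squish^{J_0}$ of the ordinary reduced word $t_it_{i\pm1}t_i$ and invoking the surjectivity argument of \Cref{thm.atomicrex} (which precedes this lemma and rests only on \Cref{prop:atomictosimple} and \Cref{lem:mimimi}). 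Either route is fine.

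Two concrete errors in the details you do write out. First, your labels do not concatenate to the displayed first line. With the paper's convention $\at_i^J\expr[Js\setminus t,Js,J]$, $s=s_{d_i}$, $t=w_{Js}sw_{Js}$, this expression read left to right is $[(Js\setminus t)+t-s]$: the reflection \emph{added} is the conjugate $t$ and the one \emph{removed} is $s$. Your $\at_i^{J_0}\expr[J_0s_b\setminus s_e,\,J_0s_b,\,J_0]$ reads as $[\cdot+s_e-s_b]$, whereas the last segment of the first line is $+s_b-s_e$ (correct is $[J_0s_e\setminus s_b,\,J_0s_e,\,J_0]$), and similarly for the other two factors; relatedly, the formula $s_b=w_{J_2s_a}s_aw_{J_2s_a}$ does not parse, since with the correct labels $s_a\in J_2=Is_a\setminus s_b$ — the intended identity is $s_b=w_{Is_a}s_aw_{Is_a}$. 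Second, your justification of the middle step is false: $s_a$ and $s_c$ are the two walls of the middle block, so adding both merges the three blocks and they lie in the \emph{same} component of $Is_as_c$. Fortunately the upup and downdown relations need no such hypothesis — $[+s+t]\expr[+t+s]$ and $[-s-t]\expr[-t-s]$ hold whenever the relevant subsets are finitary — so the step survives, but the reason you give is not the right one. Finally, the hypotheses that genuinely require verification are those of the two switchback relations; you defer this to ``bookkeeping,'' whereas the paper discharges it by the explicit computation with block sizes $x,y,z$, giving $a=x$, $b=y$, $c=x+y$, $d=y+z$, $e=z$ and exhibiting the switchback $[-s_y+s_{x+y}-s_{y+z}+s_y]\expr[+s_{x+y}-s_{y+z}]$ from \cite[Subsection 6.1]{EKo}. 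That computation is the actual content of the lemma and should be carried out rather than asserted.
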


\begin{proof}
To ease the notation we assume $i=1$ and let $x=|C_1^{(I)}|=|C_3^{(J_0)}|$, $y=|C_2^{(I)}|=|C_2^{(J_0)}|$, $z=|C_3^{(I)}|=|C_1^{(J_0)}|$. 
The general case is obtained from this by shifting the indices.

We have 
\[\at_1*\at_2*\at_1 \expr [I+ s_{x}-s_y+s_{x+y}-s_{y+z} +s_{y}-s_{z}],\]
where the switchback relation $[-s_y+s_{x+y}-s_{y+z} +s_{y}] \expr [s_{x+y}-s_{y+z}]$ applies (see \cite[Subsection 6.1]{EKo}).
Similarly, we have 
\[\at_2*\at_1*\at_2= [I+ s_{x+y}-s_{x+z}+s_{x}-s_{z} +s_{x+z}-s_{y+z}],\]
where the switchback relation $[-s_{x+z}+s_{x}-s_{z} +s_{x+z}] \expr [s_{z}-s_{x}]$ applies.
Composing with the upup and downdown relations, we complete the relation \eqref{eq:m3a}. 

Since the middle two expressions are reduced (any multistep expression of the form $[[I\subset K\supset J]]$ is reduced) and the relations are braid relations, all expressions in \eqref{eq:m3a} are reduced, confirming the first claim.
\end{proof}

\begin{lem}\label{m2a}
For $|i-j|>1$, the composition $\at_i*\at_j$ is reduced and thus core. We have
    \begin{equation}\label{eq:m2a}
    \begin{alignedat}{2}
        \at_i.\at_j&\quad \ \expr && [I+s_a-s_b+s_c-s_d]  \\
        &\underset{\text{switchback}}{\expr} && [I+s_a+s_c-s_b-s_d] \\
        &\overset{\text{downdown}}{\underset{\text{upup}}{\expr}}&&[I+s_c+s_a-s_d-s_b]  \\
        &\underset{\text{switchback}}{\expr} &&[I+s_c-s_d+s_a-s_b]\expr \at_j.\at_i,
    \end{alignedat}
    \end{equation}
    where the middle $`\expr$' is the composition of two commuting braid relations and $I,a,b,c,d$ are determined by $i$ and the hidden superscript $J_0$.
\end{lem}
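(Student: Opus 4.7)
The plan is to parallel the proof of Lemma~\ref{m3a} but in a simpler setting, exploiting the disjointness of the underlying components. Without loss of generality, assume $i < j$, so $j \geq i+2$. Let $I_0 = J_0$ with components $C_1,\ldots,C_\ell$; the atomic coset $\at_j = \at_j^{J_0}$ swaps the components $C_j$ and $C_{j+1}$, while $\at_i = \at_i^{J_1}$ swaps the two components of $J_1$ corresponding to (the images of) $C_i, C_{i+1}$. Since $|i-j|>1$, these two pairs of components are pairwise disjoint, which will be the key source of commutativity throughout the argument.

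First, I would unpack each atomic coset into its canonical three-step form $[I + s - t]$ from Lemma~\ref{lem:atomic} and concatenate to produce the starting expression
\[ \at_i.\at_j \expr [I + s_a - s_b + s_c - s_d], \]
explicitly identifying $s_a, s_b$ as the reflections added and removed by $\at_i$, and $s_c, s_d$ as those added and removed by $\at_j$. The middle two reflections $s_b$ and $s_c$ both lie at level $J_1$, but they involve disjoint component pairs; in particular $|b-c|>1$ in $S$, and similarly $|a-c|>1$ and $|b-d|>1$.

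Next, I would apply the switchback relation to the middle segment. Here the relevant $(J_1 \setminus s_b, J_1 \setminus s_c)$-coset expressed by $[J_1 \setminus s_b, J_1, J_1 \setminus s_c]$ decomposes under switchback into its core piece plus up/down legs on the disjoint complementary components; because $s_b$ and $s_c$ commute and act on disjoint blocks, the switchback is the trivial one giving $[J_1 \setminus s_b, J_1 \setminus \{s_b, s_c\}, J_1 \setminus s_c]$. This yields the second expression $[I + s_a + s_c - s_b - s_d]$. Then I would apply the upup relation to swap $+s_a$ past $+s_c$, and the downdown relation to swap $-s_b$ past $-s_d$ (these are the commuting braid relations of \cite[Section 6]{EKo}, valid since $|a-c|>1$ and $|b-d|>1$), to reach $[I + s_c + s_a - s_d - s_b]$. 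Finally, I would apply switchback in reverse on the new middle $[+s_a - s_d]$ to reinsert a multistep form, producing $[I + s_c - s_d + s_a - s_b] \expr \at_j.\at_i$.

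For reducedness, I would note that the second and third expressions in \eqref{eq:m2a} are multistep expressions of the form $[[I \subset K \supset J]]$, which are always reduced; since the connecting moves are braid relations, all six expressions in the chain are reduced, so $\at_i * \at_j$ is reduced and hence core by Lemma~\ref{reducedcompofatomiciscore}. The main obstacle I anticipate is purely notational: tracking the precise indices $a,b,c,d$ as permutations of components move them between levels $J_0$, $J_1$, and $I$, and confirming that in every intermediate set the reflections being added or removed actually satisfy the hypotheses of the switchback, upup, and downdown relations. Since $|i-j|>1$ enforces that the two atomic pieces act on disjoint components throughout, this bookkeeping should go through cleanly with no genuinely new phenomenon compared to Lemma~\ref{m3a}.
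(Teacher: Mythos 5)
Your proposal is correct and follows essentially the same route as the paper: identify the four reflections from the two atomic pieces, observe that $|i-j|>1$ forces $s_b,s_c$ (resp. $s_a,s_d$) into different connected components of the relevant middle parabolic so that the switchback degenerates to its commuting form, interpose the upup/downdown moves, and deduce reducedness of the whole chain from the middle expressions being of the form $[[I\subset K\supset J]]$. The only quibble is that the operative hypothesis for the trivial switchback is the different-connected-components condition (which your ``disjoint blocks'' observation supplies), not merely $|b-c|>1$ as integers.
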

\begin{proof}
Let $c,d,J$ be such that $\at_j^{J_0}\expr [J+s_c-s_d]$ and let $a,b,J'$ be such that $\at_i^{J_0}\expr[J'+s_a-s_b]$. It is easy to verify that $\at_i^{J} = [I + s_a - s_b]$ and $\at_j^{J'} = [I + s_c - s_d]$ for the same parabolic subset $I$. If $|i-j|>1$, then  $s_b$ and $s_c$ (resp., $s_a$ and $s_d$) belong to different connected components of $J\sqcup s_c$ (resp., $J'\sqcup s_a$) and thus the claimed switchback relations hold, see \cite[\S 5.6]{EKo}. 
The last equality is straightforward.

Since the middle two expressions are reduced and the involved relations are braid relations, all expressions in \eqref{eq:m3a} are reduced, confirming the first claim. 
\end{proof}

We call \emph{atomic braid relations} in type $A$ the two relations 
\[ \at_i.\at_{i+1}.\at_i\expr \at_{i+1}.\at_{i}.\at_{i}\qquad \at_i.\at_j\expr \at_j.\at_i \text{ if } |i-j|>1\]
between atomic reduced expression
that we have just proved in \Cref{m2a} and \Cref{m3a}. We obtain an atomic version of the Matsumoto theorem.
 
\begin{thm}[Matsumoto's theorem for atomic reduced expressions in type  $A$]\label{thm.atomicMatsumoto}
    Let $p$ be a core $(I,J)$-coset. Then any two atomic reduced expressions of $p$ are related by atomic braid relations.
\end{thm}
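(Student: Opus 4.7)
The plan is to reduce this atomic Matsumoto theorem to ordinary Matsumoto in the smaller symmetric group $\calS(Y_{n-m})$ (with $m = \#J$) via the squashing bijection of \Cref{thm.atomicrex}. Given two atomic reduced expressions $A_\bullet$ and $A'_\bullet$ for $p$, I would first apply $\squish$ to obtain two ordinary reduced expressions $\squish(A_\bullet)$ and $\squish(A'_\bullet)$ for $\squish(p)$. The ordinary Matsumoto theorem then produces a sequence
\[ \squish(A_\bullet) = E_0 \to E_1 \to \cdots \to E_N = \squish(A'_\bullet) \]
in which each $E_{k+1}$ is obtained from $E_k$ by a single ordinary braid relation, either $t_i t_{i+1} t_i \leftrightarrow t_{i+1} t_i t_{i+1}$ or a commutation $t_i t_j \leftrightarrow t_j t_i$ for $|i-j|>1$. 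By the bijection of \Cref{thm.atomicrex}, each $E_k$ lifts uniquely to an atomic reduced expression $B_k$ for $p$, with $B_0 = A_\bullet$ and $B_N = A'_\bullet$. The remaining task is to check that consecutive $B_k$ and $B_{k+1}$ are related by a single atomic braid relation.

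To establish this, I would write each braid step as $E_k = F \cdot G \cdot H \to F \cdot G' \cdot H = E_{k+1}$, where $G$ is the two- or three-letter subword being modified. Since squashing respects reduced compositions of core cosets (\Cref{monoidmorphism?}) and the atomic-to-ordinary bijection of \Cref{thm.atomicrex} identifies contiguous subwords in a compatible way, $B_k$ factors correspondingly as a reduced composition $\tilde F . \tilde G . \tilde H$ of atomic expressions with $\squish(\tilde G) = G$. Reading off the superscript $J_0$ from the leftmost parabolic subset of $\tilde H$, the atomic subword $\tilde G$ is precisely of the form $\at_i.\at_{i+1}.\at_i$ or $\at_i.\at_j$ with $|i-j|>1$ treated in \Cref{m3a} or \Cref{m2a}. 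Those lemmas directly provide an atomic braid relation turning $\tilde G$ into a new atomic expression $\tilde G'$ with $\squish(\tilde G') = G'$, so that $B_{k+1} = \tilde F . \tilde G' . \tilde H$, as desired.

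The main subtlety is a bookkeeping one: \Cref{m3a} and \Cref{m2a} are phrased for atomic expressions ending in some fixed $J_0$, so one must verify that the relations can legitimately be applied to an interior subword. This is immediate once one observes that the statements of those lemmas depend only on the atomic subword together with its right endpoint, which in our situation is determined by the surrounding expression. All of the genuine combinatorial substance has already been packaged in the squashing bijection (\Cref{thm.atomicrex}) and the explicit atomic braid relations (\Cref{m3a}, \Cref{m2a}), so I expect no further obstacle beyond organizing the lift cleanly.
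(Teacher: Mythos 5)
Your proposal is correct and follows essentially the same route as the paper: squash to $\squish(p)$ via Theorem~\ref{thm.atomicrex}, invoke the ordinary Matsumoto theorem, and lift each braid move back through the bijection using Lemmas~\ref{m3a} and~\ref{m2a}. The paper's proof is just a terser version of yours; your extra care about lifting interior subwords and tracking the superscripts is a legitimate elaboration of details the paper leaves implicit.
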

\begin{proof}
\Cref{thm.atomicrex} tells us that atomic reduced expressions for $p$ correspond to ordinary reduced expressions for $\squish(p)$. By Matsumoto's theorem on ordinary expressions, all reduced expression for $\squish(p)$ are related by braid moves, so also atomic reduced expressions for $p$ are related by atomic braid relations. 
\end{proof}

\begin{rem} 
Theorem~\ref{thm.atomicMatsumoto} does not say that every relation between atomic reduced expressions of a core $(I,J)$-coset $p$ is a composition of the relations of the form \eqref{eq:m2a} and \eqref{eq:m3a}. Such a composition factors only through the expressions involving the subsets of $S$ whose size is between $\# J$ and $\# J +2$.  For example, when $S \setminus J$ has size at least $3$, and $\squish(p)$ is the longest element, one can find a reduced expression $I_{\bullet}$ for $p$ which factors through $[[I \subset S]]$, and a sequence of singular braid moves between atomic reduced expressions which factors through $I_{\bullet}$. \end{rem}

\subsection{Non-reduced atomic expressions}

\begin{lem}\label{aa=a}
Let  $\at_i^J\expr [I,Js,J]$ be an atomic coset.
\begin{enumerate}
    \item The composition $\at^I_i*\at^J_i$ is not reduced and $\at^I_i*\at^J_i\cong [J,Js,J]$.
    \item  The composition $\at^I_i*\at^J_i$ is core if and only if $I=J$, in which case we have $\at_i*\at_i=\at_i$.
    \item We have $\at_i*\at_i*\at_i^J=\at_i^J$.
\end{enumerate}
\end{lem}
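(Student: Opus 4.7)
The plan is to first identify $\at_i^I$ explicitly and then deduce all three parts from a short calculation in the $*$-monoid together with \Cref{lem:atomic}. Writing $\at_i^J \expr [I, M, J]$ with $M = Js$, $t = w_M s w_M$, and $I = M \setminus t$, the identification I would establish is $\at_i^I = (\at_i^J)^{-1}$; equivalently, $\at_i^I$ has reduced expression $[J, M, I]$. Indeed, by \Cref{lem:reverseword} the inverse coset $(\at_i^J)^{-1}$ is a core $(J, I)$-coset, and the identity $w_M s w_M = t$ rearranges to $w_M t w_M = s$, so by \Cref{lem:atomic} the expression $[J + s - t] = [J, M, I]$ is the unique reduced expression of an atomic $(J, I)$-coset. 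Since $(\at_i^J)^{-1}$ squashes to the same simple reflection $t_i$ as $\at_i^J$ (the inverse of a block permutation induces the inverse permutation of blocks, and $t_i^{-1} = t_i$), \Cref{prop:atomictosimple} then forces $(\at_i^J)^{-1} = \at_i^I$.

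For part~(1), I would compute $\ma{\at_i^I * \at_i^J} = w_M * w_M = w_M$, so $\at_i^I * \at_i^J$ is the $(J, J)$-coset $W_J w_M W_J$, which is expressed by $[J, M, J] = [J, Js, J]$. To see this $*$-composition is not reduced (in the sense of \S\ref{s:singmon}), I would apply the reducedness criterion: it would require $(\ma{\at_i^I} w_I^{-1}) . \ma{\at_i^J} = (w_M w_I) \cdot w_M$ to be a length-additive product, forcing its length to be $2\ell(M) - \ell(I)$; but this element lies in $W_M$ and hence has length at most $\ell(M)$, contradicting $I \subsetneq M$.

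For parts~(2) and~(3), by \Cref{lem:atomic} the coset $W_J w_M W_J$ expressed by $[J + s - s] = [J, M, J]$ is core if and only if $w_M s w_M = s$, i.e., $t = s$, i.e., $I = J$; in that case this coset is the atomic $\at_i^J = \at_i^I = \at_i$, and so $\at_i * \at_i = \at_i$. For part~(3), using part~(1), the composition $\at_i^J * (\at_i^I * \at_i^J)$ equals $\at_i^J * W_J w_M W_J$; its maximal element is $w_M * w_M = w_M$, so as the $(I, J)$-coset containing $w_M$ it must equal $\at_i^J$. The main obstacle is nailing down the initial identification $\at_i^I = (\at_i^J)^{-1}$; once this is in place, everything reduces to repeated use of $w_M * w_M = w_M$ and a single application of \Cref{lem:atomic}.
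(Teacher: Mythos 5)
Your proof is correct and follows the same overall skeleton as the paper's: first identify $\at_i^I$ with $(\at_i^J)^{-1} \expr [J,Js,I]$, then deduce all three parts from $\ma{p*q}=\ma{p}*\ma{q}$ and $w_M * w_M = w_M$. The differences are local but genuine. For the key identification, the paper argues directly with indices: since $s$ and $t=w_Msw_M$ lie in the same connected component of $M=Js$, the element $t$ occupies the same position in $S\setminus I$ that $s$ occupies in $S\setminus J$, which forces $\at_i^I \expr [J,M,I]$. You instead route through the squashing bijection of \Cref{prop:atomictosimple}, using that $\squish$ intertwines inversion and that $t_i$ is an involution; this is valid (no circularity, since the squashing results precede this lemma, and the analogous statement \Cref{thm:atomicrexB} covers the type $B$ case) and is arguably a cleaner way to pin down the index. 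For non-reducedness you give a length count --- a length-additive product $(w_Mw_I).w_M$ would have length $2\ell(M)-\ell(I)>\ell(M)$, yet $w_Mw_Iw_M\in W_M$ --- where the paper instead exhibits the $*$-quadratic cancellation $[-t+t]\expr[\,]$ inside the concatenated expression $[K+t'-t+s'-s]$; both arguments are correct, and yours has the small advantage of not needing the expression calculus. Finally, for parts (2) and (3) the paper simply cites \Cref{atomatom}, whereas you redo the (identical) short computations via \Cref{lem:atomic} and $w_M*w_M=w_M$; that is harmless duplication rather than a gap.
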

\begin{proof}
We have $\at_i^J\expr [I+s'-s]$ where $s'$ is  the $i$-th element in $S\setminus I$.
By definition, we can write $\at^I_i\expr [K,It,I]$, where $t$ is the $i$-th element in $S\setminus I$. Thus, we have $s'=t$.

Since $\at^I_i$ is atomic, we can also write $\at^I_i\expr [K+t'-t]$ with $t'=w_{It}tw_{It}$. So
\[\at^I_i*\at^J_i \expr[K+t'-t]*[I+s'-s]=[K+t'-t+s'-s]\]
contains the reducible expression $[-t+t]\expr []$.
This shows that the composition is not reduced and we have $\at_i^I*\at_i^J\expr [K+t'-s]$.

Moreover, we also have have $s=w_{Is'}s'w_{Is'}=w_{It}tw_{It}=t'$, so also $s=t'$ and $K=J$. This shows the first statement.

The second statement follows from \Cref{atomatom}\eqref{pqcore} and
the third follows from \Cref{atomatom}\eqref{ppp} since $\at_i^I\expr [J,Js,I]\expr (\at_i^J)^{-1}$.
\end{proof}

\begin{rem}
    If $\at_i*\at_i$ is not core, then we have $\at_i*\at_i \expr [J,Js,J]$ but $s\neq w_{Js} s w_{Js}$.
\end{rem}

\subsection{Atomic categories in type \texorpdfstring{$A$}{A}}

Recall from \S\ref{subsec:atomicsubcats} the definitions of the categories $\AC$ and $\AD$. We now discuss these categories in type $A$.

The braid relations of \S\ref{ssec:atomicbraid} and the non-braid relations from Lemma \ref{aa=a} can be interpreted as relations in $\AC$. They induce relations in $\AD$. More precisely, the braid relations can be copied to $\AD$ verbatim (after a change in notation), while Lemma \ref{aa=a} implies that $\pa_{\at_i} \circ \pa_{\at_i} = 0$.

We will discuss $\AD$ first, as it is easier to understand, and will resume the discussion of $\AC$ afterwards.
Our first goal is to present $\AD$ by generators and relations, akin to typical descriptions of the KLR algebra \cite{KLDiagrammatic}. Indeed, $\AD$ is simply an algebroid version of the usual nilCoxeter algebra.

Let $y$ be a block permutation relative to $I$ and $J$. We write $J \leftrightarrow (n_1, \ldots, n_k)$ when $W_J \cong \calS_{n_1} \times \cdots \times \calS_{n_k}$. After squashing $y$ to a permutation on $k$ strands, we can label the strands with the numbers $n_1$ through $n_k$ as in the following example.
\begin{equation} \label{labelednilcoxex} \squish \co \quad \ig{1}{examplecoreatomicrex} \quad \mapsto \quad {
\labellist
\small\hair 2pt
 \pinlabel {$1$} [ ] at 0 -5
 \pinlabel {$1$} [ ] at 8 -5
 \pinlabel {$3$} [ ] at 16 -5
 \pinlabel {$2$} [ ] at 24 -5
 \pinlabel {$1$} [ ] at 32 -5
 \pinlabel {$2$} [ ] at 40 -5
 \pinlabel {$1$} [ ] at 48 -5
 \pinlabel {$1$} [ ] at 0 90
 \pinlabel {$3$} [ ] at 8 90
 \pinlabel {$1$} [ ] at 16 90
 \pinlabel {$2$} [ ] at 24 90
 \pinlabel {$1$} [ ] at 32 90
 \pinlabel {$1$} [ ] at 40 90
 \pinlabel {$2$} [ ] at 48 90
\endlabellist
\centering
\ig{1}{examplesquashed}
} \end{equation}
In this way we can view the image of $\squish^J$ as permutations of labeled strands, with the bottom labels determined by $J$.

\begin{defn} Let $n \ge 0$. The \emph{labeled nilCoxeter algebroid} $\nilcox(n)$ is the $\Z$-linear category presented as follows. The objects are compositions of $n$, i.e. sequences $(n_1, \ldots, n_k)$ with $\sum n_c = n$, identified with parabolic subsets $J$ as above. We draw the identity map $1_J$ of a composition as a sequence of labeled strands.
The generating morphisms are crossings $\pa_i^J = \pa_i 1_J$, which are morphisms from $J \leftrightarrow (n_1, \ldots, n_i, n_{i+1}, \ldots, n_k)$ to $(n_1, \ldots, n_{i+1}, n_i, \ldots, n_k)$. The relations are 
\begin{equation}\label{Nrel1}
    \partial_i\partial_{i+1}\partial_i^J=\partial_{i+1}\partial_{i}\partial_{i+1}^J \quad \text{for $1\leq i\leq k-2$}
\end{equation}
    \begin{equation}\label{Nrel2}
        \partial_i\partial_j^J = \partial_j\partial_i^J \quad \text{ if $|i-j|>1$}
    \end{equation}
    \begin{equation}\label{Nrelnil}
        \partial_i\partial_i^J = 0 \text{ for $1\leq i\leq k-1$}.
    \end{equation}
We have omitted superscripts when they are uniquely determined.
\end{defn}

The RHS of \eqref{labelednilcoxex} now represents the morphism in $\nilcox(11)$:
\[ \pa_5 \pa_6 \pa_4 \pa_5 \pa_3 \pa_4 \pa_2 \co (1,1,3,2,1,2,1) \to (1,3,1,2,1,1,2).\] 

\begin{prop}\label{cor.AC0}

We have isomorphisms of $\Bbbk$-linear algebroids
    \begin{align*}
         \psi \co \nilcox(n) & \xrightarrow{\sim} \AD(\calS_n)\\
          \partial_i^J&\mapsto \partial_{\at_i^J}.
    \end{align*}
\end{prop}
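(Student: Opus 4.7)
The plan is to verify that $\psi$ is a well-defined functor by checking the defining relations of $\nilcox(n)$, and then to exhibit matching bases on both sides and show $\psi$ identifies them.

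For well-definedness, the atomic braid relations \Cref{m3a} and \Cref{m2a} are identities in $\SC$ between reduced atomic expressions, so after passing to the linearization and then to the associated graded, they remain valid in $\Dem$ and hence in $\AD$. This takes care of \eqref{Nrel1} and \eqref{Nrel2} (noting that the $m_{ij}=2$ braid is the long commutation, and that the labels/superscripts line up by inspection of the $J_0$ conventions in \S\ref{ssec:atomicbraid}). For the nil relation \eqref{Nrelnil}, one has $\psi(\pa_i \pa_i^J) = \pa_{\at_i^I}\circ \pa_{\at_i^J}$, where $\at_i^J \expr [I,Js,J]$. By \Cref{aa=a}(1), $\at_i^I * \at_i^J$ is not a reduced composition, so by the defining formula \eqref{eq:Demrelation} for $\Dem$ this composition vanishes, as required.

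For the isomorphism, I would use the theorem preceding \Cref{rem:ACblocks}, which says $\{\pa_p\}_{p \text{ core}}$ is a basis of $\AD$. On the $\nilcox(n)$ side I would produce a spanning set $\{\pa_w^J\}$ indexed by pairs $(J,w)$, where $J\leftrightarrow (n_1,\ldots,n_k)$ is an object and $w$ is a permutation in $\calS_k$ acting on the $k$ parts, and where $\pa_w^J := \pa_{i_1}\circ \cdots \circ \pa_{i_d}^J$ for any reduced expression $w = t_{i_1}\cdots t_{i_d}$ in $\calS_k$. Applying $\psi$: by \Cref{thm.atomicrex} and \Cref{thm.psi}, every reduced composition $\at_{i_1}*\cdots *\at_{i_d}^J$ equals the unique core $(w(J),J)$-coset $p$ with $\squish(p)=w$, so $\psi(\pa_w^J) = \pa_p$. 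Since $\squish^J$ is bijective, $\psi$ sends the spanning set $\{\pa_w^J\}$ bijectively onto the basis $\{\pa_p\}_{p\text{ core}}$ of $\AD$. This simultaneously forces $\{\pa_w^J\}$ to be a basis of $\nilcox(n)$ and $\psi$ to be a bijection, completing the proof.

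The main obstacle is justifying that $\{\pa_w^J\}$ is well-defined (independent of the chosen reduced expression for $w$) and spans $\nilcox(n)$. This is the standard nilHecke-algebra argument, transplanted to the labeled/algebroid setting: the braid relations \eqref{Nrel1} and \eqref{Nrel2} exactly encode ordinary Matsumoto moves in $\calS_k$, so any two reduced expressions for a fixed $w$ give the same $\pa_w^J$, while any word that is non-reduced in $\calS_k$ contains a subword equivalent under braid moves to one beginning with $\pa_i\pa_i$, which is killed by \eqref{Nrelnil}. The algebroid structure causes no real difficulty since $\nilcox(n)$ decomposes into blocks indexed by multisets of block sizes (matching the block decomposition of $\AD$ noted in \Cref{rem:ACblocks}), and within each block one is looking at a single copy of the nilHecke algebra for a symmetric group acting on an orbit of labeled compositions.
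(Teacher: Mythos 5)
Your proposal is correct and follows essentially the same route as the paper: well-definedness via Lemmas \ref{m3a}, \ref{m2a} and \ref{aa=a}, surjectivity from atomic reduced expressions of core cosets, and injectivity by identifying $\nilcox(n)\cdot 1_J$ with the ordinary nilCoxeter algebra on $k$ strands and matching a spanning set against the basis $\{\pa_p\}_{p\ \mathrm{core}}$ of $\AD$. Your spelling-out of the standard nilCoxeter spanning-set argument is slightly more explicit than the paper's one-line appeal to ``the same presentation,'' but it is the same idea.
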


\begin{proof}
The relations ~\eqref{Nrel1} and \eqref{Nrel2} hold in $\AD$ by \Cref{m3a} and \Cref{m2a} respectively. The relation~\eqref{Nrelnil} also follows by \Cref{aa=a}, since non-reduced compositions are zero in $\AD$. Thus the functor $\psi$ is well-defined.

Fix $J \subset S$, corresponding to a composition with $k$ parts. Let $p$ be a core $(I,J)$-coset. We know that $\pa_p$ is equal to a composition of $\pa_{\at_i}$ over an atomic reduced expression for $p$. By Theorem \ref{thm.psi}, $\psi$ sends $\squish^J(p)$, interpreted as a product of the generators $\pa_i$ in $\nilcox(n)$, to $\pa_p$ in $\AD$. Thus $\psi$ is surjective. By Theorem \ref{thm.psi}, $\psi$ will be injective if the left ideal
\[ \bigoplus_I \Hom_{\nilcox(n)}(J,I) = \nilcox(n) \cdot 1_J \]
has a $\Z$-basis of the form $\{\squish^J(p)\}$, in bijection with permutations of $k$.

Let $\mathcal{N}(k)$ denote the ordinary nilCoxeter algebra on $k$ strands. There is a bijection between $\mathcal{N}(k)$ and $\nilcox(n) \cdot 1_J$. Indeed, these spaces have exactly the same presentation by generators and relations. This proves that the size of $\nilcox(n) \cdot 1_J$ is exactly as desired.
\end{proof}

% Surjectivity of $\psi^\lambda$ on morphisms is by construction of $\AD$. 
% Let now $\partial$ and $\partial'$ be products of generators in $\NHA(\lambda)$. If $\psi^\lambda(\partial)=\psi^\lambda(\partial')$ then $\psi^\lambda(\partial)=\psi^\lambda(\partial')=\partial_p$ for some core coset $p$. This means that $\partial$ and $\partial'$ give two different atomic reduced expressions of $p$, which are related by braid relations (by \Cref{thm.atomicMatsumoto}), and thus we have $\partial=\partial'$ in $
% \NHA(\lambda)$. Since morphisms in $\AD$ have a bases indexed by core cosets, we conclude that $\psi^\lambda$ is also injective on morphisms.

Finding a presentation for $\AC$ is more difficult. The relation 
\begin{equation}\label{eq.atomcub}
    \at_i*\at_i*\at_i^I =\at_i^I
\end{equation}
holds in general, replaced by
\begin{equation}\label{eq.atomquad}
    \at_i^I*\at_i^I=\at_i^I,
\end{equation}
when $\at_i^I$ is an $(I,I)$-coset. We are unsure whether these relations, combined with the braid relations, are sufficient to describe $\AC$ in type $A$.

In the special case where $n = k \ell$, letting $J$ correspond to the composition $(k,k, \ldots,  k)$ of $n$, one sees that there are no morphisms between $J$ and $I$ for any $I \ne J$. The monoid $\AC(J) = \End_{\AC}(J)$ agrees with the $*$-monoid for $\calS_{\ell}$, by a very similar argument to Proposition \ref{cor.AC0} above but with one additional input coming from \cref{aa=a}. Namely, if $p$ is a core $(J,J)$-coset squishing to a permutation with $s_i$ in its right descent set, then $p$ has an atomic reduced expression ending in $\at_i^J$, and thus \eqref{eq.atomquad} implies that $p * \at_i^J = p$. We leave the proofs to the reader.

\section{Atomic expressions in type \texorpdfstring{$B$}{B}}\label{s.typeB}

Having belabored the point in type $A$, let us skip to the chase in type $B$. We indicate how to squash a core coset in type $B$ into an element of a smaller type $B$ Weyl group. Under this squashing procedure, atomic cosets are sent to simple reflections. We indicate how each of the braid relations in type $B$ lifts to a sequence of singular braid relations between atomic expressions. We do not include many proofs, as they are reasonably adaptable from the proofs in type $A$.

Let $(W_n,S)$ be of type $B_n$ with $S =\{s_0,s_1,\ldots, s_{n-1}\}$, where $m_{s_0 s_1} = 4$ and $m_{s_i s_{i+1}} = 3$ otherwise. Write $W = W_n$. The only conjugate of $s_0$ which is a simple reflection is $s_0$ itself. Therefore, if $p$ is an $(I,J)$-coset then $s_0 \in \rightred(p)$ if and only if $s_0 \in \leftred(p)$, if and only if $s_0 \in I \cap J$ and $s_0$ commutes with $\mi{p}$. 

View $W$ as the group of permutations $\sigma$ of the set $X = \{-n, \ldots, -1, 0, 1, \ldots, n\}$ such that $\sigma(-i) = - \sigma(i)$. For any such $\sigma$ we have $\sigma(0) = 0$, but we include the index $0$ to simplify some discussion later.

In this setting, we have $s_i=(i \;\; i+1) (-i\;\; -i-1)$ for $i>0$ and $s_0=(-1\;\; 1)$.
Note that $y \in W$ commutes with $s_0$ if and only if $y$ preserves the set $\{ \pm 1\}$.

For $J \subset S$, let $\sim_J$ be the equivalence relation on $X$ generated by the relations $j \sim_J (j+1)$ and $-j \sim_J (-j-1)$ whenever $s_j \in J$. The equivalence classes are clearly contiguous subsets of $X$, so if $i < 0 < j$ and $i \sim_J j$ then $i \sim_J 0 \sim_J j$.

\begin{lem} Fix $J \subset S$. Let $C_0$ be the equivalence class of $0$ under $\sim_J$. If $i \in C_0$ then $-i \in C_0$. Any other equivalence class consists of either only positive or only negative numbers. If $C$ is such an equivalence class, then $-C = \{-i \mid i \in C\}$ is also an equivalence class. \end{lem}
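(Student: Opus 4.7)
The plan is to exploit the obvious sign-flip symmetry built into the generating relations of $\sim_J$. Define the involution $\iota \co X \to X$ by $\iota(i) = -i$. The generating relations come in symmetric pairs: whenever $s_j \in J$ contributes the generator $j \sim_J (j+1)$, it also contributes the generator $(-j) \sim_J (-j-1)$, which is exactly $\iota(j) \sim_J \iota(j+1)$. Hence $\iota$ permutes the generating relations, and therefore $i \sim_J j$ if and only if $\iota(i) \sim_J \iota(j)$. Consequently, for any equivalence class $C$, the set $\iota(C) = -C$ is again an equivalence class.

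Applying this to $C_0$: since $\iota(0) = 0$, the class $-C_0$ contains $0$, and therefore $-C_0 = C_0$. This gives the first assertion, that $i \in C_0$ implies $-i \in C_0$.

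For the second assertion, suppose $C$ is an equivalence class other than $C_0$, so in particular $0 \notin C$. If $C$ contained both a negative element $i < 0$ and a positive element $j > 0$, then by the contiguity observation quoted just before the lemma, $i \sim_J j$ forces $i \sim_J 0$, giving $0 \in C$, a contradiction. Hence $C$ is contained in either the positive integers or the negative integers. The final assertion $-C$ is then immediate from the first paragraph, noting that $-C \ne C$ in this case because $C$ and $-C$ lie on opposite sides of $0$.

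I anticipate no real obstacle here: the entire argument is a one-line symmetry observation combined with the already-established contiguity of the equivalence classes. The only thing that requires a moment's care is making sure that the generators of $\sim_J$ are genuinely stable under $\iota$, which is immediate from the way the relations are stated in the paragraph preceding the lemma.
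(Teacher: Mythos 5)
Your proof is correct and is exactly the argument the authors intend: the paper omits a proof of this lemma entirely (it is in the type $B$ section where proofs are declared routine), and the sign-flip involution on the generators plus the contiguity remark stated immediately before the lemma is the natural way to supply it. The one point worth being explicit about, which you handle correctly, is that the $s_0$ generators $0 \sim_J 1$ and $0 \sim_J -1$ are swapped (not fixed) by $\iota$, so the generating set is still $\iota$-stable.
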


We label the equivalence classes under $\sim_J$ in order as 
\[ \{C_{-k}, \ldots, C_{-1}, C_0, C_1, \ldots, C_k\} \] where $C_{-c} = - C_c$ and $C_0$ contains $0$. When $\# J = m$ is fixed, we write $Y_{n-m}$ for the ordered set $\{-k, \ldots, -1, 0, 1, \ldots, k\}$ which indexes the equivalence classes under $\sim_J$. Note that $k = n-m$.

Fix $I$ and $J$ of size $m$. We call $y \in W$ a \emph{block permutation} relative to $I$ and $J$ if each component $C_c^{(J)}$ is sent by an order-preserving bijection onto a component $C_{c'}^{(I)}$. If so, $y$ induces a bijection $X/\sim_J \to X/\sim_I$, which we can view as a permutation $\squish(y)$ of $Y_{n-m}$. We call this the \emph{squashed permutation} of $y$.

\begin{ex} Here is a block permutation in $W_5$ relative to $I = \{s_1, s_3\}$ and $J = \{s_1, s_4\}$, and its squashed permutation. The components $C$ are marked with rectangles, and the index $0$ has a thicker strand.
\begin{equation} \squish \co \quad \ig{1}{Bcoreex} \quad \mapsto \quad \ig{1}{Bcoreexsquashed}\; .
\end{equation}
\end{ex}

\begin{ex} Here is a block permutation in $W_5$ relative to $I = \{s_0, s_1, s_3\}$ and $J = \{s_0, s_1, s_4\}$, and its squashed permutation.

\begin{equation} \squish \co \quad \ig{1}{Bcoreex2} \quad \mapsto \quad \ig{1}{Bcoreex2squashed}\; .
\end{equation}

\end{ex}

\begin{lem} The permutation $\sigma = \squish(y)$ of any block permutation $y$ satisfies the property that $\sigma(-i) = - \sigma(i)$ for all $i \in Y_{n-\# J}$. Therefore, $\squish(y)$ lives in the Weyl group of type $B_{n-\# J}$. \end{lem}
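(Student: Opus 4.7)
The plan is to verify the signed-permutation identity $\sigma(-i) = -\sigma(i)$ directly from the definition of a block permutation, by transporting the involution $j \mapsto -j$ on $X$ through the squashing construction.

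First I would record the basic compatibility between $W_n$ and the equivalence relation $\sim_J$. By definition of $W_n$ as permutations of $X$ with $y(-j) = -y(j)$, and by construction of $\sim_J$, which is generated by symmetric pairs of relations $j \sim_J j{+}1$ and $-j \sim_J -j{-}1$, the map $j \mapsto -j$ sends equivalence classes to equivalence classes, interchanging $C_c$ with $C_{-c}$ and fixing $C_0$ setwise. The analogous statement holds for $\sim_I$. So the involution $j \mapsto -j$ descends to the involution $c \mapsto -c$ on the index set $Y_{n-\# J}$.

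Next I would use the assumption that $y$ is a block permutation. Fix $c$ and let $c'$ be such that $y$ maps $C_c^{(J)}$ to $C_{c'}^{(I)}$ in an order-preserving way, so that $\squish(y)(c) = c'$. Applying the identity $y(-j) = -y(j)$, we get
\begin{equation}
y(C_{-c}^{(J)}) = y(-C_c^{(J)}) = -y(C_c^{(J)}) = -C_{c'}^{(I)} = C_{-c'}^{(I)}.
\end{equation}
This is automatically order-preserving because $j \mapsto -j$ reverses order on each of $C_c^{(J)}$ and $C_{c'}^{(I)}$, and the two order reversals cancel. Thus $\squish(y)(-c) = -c' = -\squish(y)(c)$, as required. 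For the special case $c = 0$, the class $C_0^{(J)}$ is the unique class containing $0$, so it must be sent to $C_0^{(I)}$, giving $\squish(y)(0) = 0$, which is consistent with $\sigma(-0) = -\sigma(0)$.

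Once the identity $\squish(y)(-c) = -\squish(y)(c)$ is established, the conclusion follows immediately from the description of $W_{n-\#J}$ recalled at the start of the section as the group of permutations $\sigma$ of $\{-(n{-}\#J), \ldots, n{-}\#J\}$ with $\sigma(-i) = -\sigma(i)$. There is no real obstacle here: the whole content is that the symmetry $j \mapsto -j$ descends through $\sim_J$ and is respected by block permutations, which is forced by the way $W_n$ is built into $W$ and the way $\sim_J$ is defined. The only mild subtlety is the order-preserving check for $C_{-c}$, which is a book-keeping issue resolved by observing that the two instances of order-reversal cancel.
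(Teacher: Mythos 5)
Your proof is correct; the paper omits the proof of this lemma entirely (the section states that proofs are adaptable from type $A$), and your direct verification --- that the involution $j \mapsto -j$ descends through $\sim_J$ and is respected by $y$ because $y(-j) = -y(j)$, forcing $y(C_{-c}^{(J)}) = C_{-c'}^{(I)}$ --- is exactly the intended argument. The order-preservation of $y$ on $C_{-c}$ is already part of the block-permutation hypothesis, so your cancellation-of-reversals remark is a consistency check rather than a needed step, but it does no harm.
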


\begin{lem} If $y$ is a block permutation relative to $I$ and $J$, and $z$ is a block permutation relative to $J$ and $K$, then $yz$ is a block permutation relative to $I$ and $K$. Moreover, if $m$ is the size of $I$ (and $J$ and $K$), then $\squish(y) \squish(z) = \squish(yz)$ within the Weyl group of type $B_{n-m}$. \end{lem}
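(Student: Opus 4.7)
The plan is to imitate the type $A$ proof of \Cref{lem:squishhomom}, which was essentially a tautology, and to check that the additional $\pm$ symmetry inherent in type $B$ is preserved at every step. I would separate the argument into the two assertions.

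For the first assertion, fix a component $C_d^{(K)} \in X/\sim_K$. Since $z$ is a block permutation relative to $J$ and $K$, it sends $C_d^{(K)}$ via an order-preserving bijection onto some component $C_{d'}^{(J)} \in X/\sim_J$. Since $y$ is a block permutation relative to $I$ and $J$, it then sends $C_{d'}^{(J)}$ via an order-preserving bijection onto some $C_{d''}^{(I)} \in X/\sim_I$. The composition of two order-preserving bijections between finite totally ordered sets is an order-preserving bijection, so $yz$ sends $C_d^{(K)}$ order-preservingly onto $C_{d''}^{(I)}$. Since this holds for every $d$, $yz$ is a block permutation relative to $I$ and $K$.

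For the second assertion, recall that $\squish(yz)$ is by definition the induced permutation on $X/\sim_K \to X/\sim_I$ identified with a permutation of $Y_{n-m}$. By the calculation above, this induced permutation sends the index $d$ to the index $d''$. But $\squish(z)$ sends $d$ to $d'$ and $\squish(y)$ sends $d'$ to $d''$, so $\squish(y) \squish(z)$ sends $d$ to $d''$ as well. Hence $\squish(yz) = \squish(y)\squish(z)$ as permutations of $Y_{n-m}$. The preceding lemma guarantees that each of these permutations already lives in the Weyl group of type $B_{n-m}$, so the equality holds there.

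The only potential obstacle, which I expect to be mild, is verifying that the indexing conventions for the equivalence classes (in particular the role of $C_0$, which contains $0$ and is $\pm$-symmetric) are compatible with composition. This reduces to the observation that a block permutation $y$ always sends the class $C_0^{(J)}$ to the class $C_0^{(I)}$: indeed, $y(0) = 0$ for any $y \in W$, so the class containing $0$ must map to the class containing $0$. Consequently composition respects the distinguished class $C_0$, and the bijection of the other classes is automatically compatible with the involution $C_c \mapsto -C_c = C_{-c}$ because $y$ and $z$ commute with negation. Once this bookkeeping is recorded, both statements fall out immediately.
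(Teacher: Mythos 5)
Your proof is correct and follows exactly the route the paper intends: the paper omits this proof entirely (declaring the type $B$ arguments ``reasonably adaptable'' from type $A$, where the analogous Lemma~\ref{lem:squishhomom} is dismissed as obvious), and your write-up is precisely that adaptation, with the $C_0$ and $\pm$-symmetry bookkeeping correctly checked. Nothing further is needed.
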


\begin{lem} Let $p$ be a core $(I,J)$-coset. Then $\mi{p}$ is a block permutation relative to $I$ and $J$. \end{lem}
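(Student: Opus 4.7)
The plan is to parallel the type $A$ argument from Propositions~\ref{redA} and \ref{[j,k]s}, extending it to accommodate the distinguished generator $s_0$. Since $p$ is a core $(I,J)$-coset, we have $I = \leftred(p)$, $J = \rightred(p)$, and $\mi{p} J \mi{p}\inv = I$, while $\mi{p}$ is minimal in its double coset.

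The key step is a generator-by-generator statement: for each $s_j \in J$, $\mi{p}$ respects the elementary adjacencies under $\sim_J$ and $\sim_I$. For $j > 0$, since $s_j$ is not $W$-conjugate to $s_0$, we must have $\mi{p} s_j \mi{p}\inv = s_i$ for some $i > 0$, so $\mi{p}(j+1) = s_i(\mi{p}(j)) \in \{\mi{p}(j) \pm 1\}$ (according to whether $\mi{p}(j) \in \{i,i+1\}$ or $\{-i-1,-i\}$). Minimality of $\mi{p}$ on the right, via the type $B$ descent criterion $s_j \in \rightdes(y) \iff y(j) > y(j+1)$ for $j > 0$, then forces $\mi{p}(j+1) = \mi{p}(j) + 1$; the signed symmetry $\mi{p}(-a) = -\mi{p}(a)$ yields the corresponding $\mi{p}(-j-1) = \mi{p}(-j) - 1$. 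For $j = 0$, one has $\mi{p} s_0 \mi{p}\inv = s_0$ (the unique possibility), so $\mi{p}$ commutes with $s_0$ and preserves $\{\pm 1\}$ setwise; minimality via $s_0 \in \rightdes(y) \iff y(1) < 0$ then gives $\mi{p}(1) = 1$ and $\mi{p}(-1) = -1$. As a bonus, this forces $s_0 \in I$ whenever $s_0 \in J$, matching the central components.

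Combining these, every elementary adjacency in $\sim_J$ is mapped by $\mi{p}$ to an elementary adjacency in $\sim_I$, with adjacent elements sent to adjacent elements in the same direction. Iterating within a single component, I would conclude that $\mi{p}(C_c^{(J)})$ is a set of consecutive integers in $X$, contained in some $\sim_I$-class and containing $0$ exactly when $0 \in C_c^{(J)}$. Since $\mi{p}$ is a bijection and $\sim_I$-classes are maximal blocks of this form, $\mi{p}(C_c^{(J)})$ must equal some $C_{c'}^{(I)}$ exactly, with the induced restriction order-preserving.

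The main technical obstacle I anticipate is invoking the type $B$ descent criterion cleanly, namely $s_j \in \rightdes(y) \iff y(j) > y(j+1)$ for $j > 0$ and $s_0 \in \rightdes(y) \iff y(1) < 0$. These are standard, but needed to rule out $\mi{p}(j+1) = \mi{p}(j) - 1$ in the positive case and $\mi{p}(1) = -1$ in the $s_0$ case. Once these are in hand, the argument is a bookkeeping exercise tracking the signed symmetry and the special role of $C_0$.
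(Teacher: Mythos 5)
Your proposal is correct and is essentially the argument the paper intends: the type $B$ section explicitly defers to the type $A$ proofs (Propositions~\ref{redA} and \ref{[j,k]s}), and your adaptation — using that $s_j$ ($j>0$) and $s_0$ lie in different conjugacy classes, the signed descent criteria to force $\mi{p}(j+1)=\mi{p}(j)+1$ and $\mi{p}(1)=1$, and then matching whole components via bijectivity and the equality $\#I=\#J$ — is exactly the right bookkeeping.
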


We write $\squish(p) := \squish(\mi{p})$. Now we summarize the properties of $\squish$, just as in type $A$.

\begin{thm}\label{thm:atomicrexB}  Fix $J \subset S$ of size $m$, and let $k = n-m$.  Let $W_k$ be the Weyl group of type $B_k$, acting on $Y_k$. If $p = p' . p''$ is a reduced composition of core cosets in $W_n$, then $\squish(p) = \squish(p').\squish(p'')$ is a reduced composition in $W_k$. The map $\squish$ induces a bijection 
\begin{equation} \squish^J \co \{(I,p) \mid p \text{ is a core $(I,J)$-coset} \} \to W_k, \end{equation}
which restricts to bijection between atomic $(I,J)$-cosets (as $I$ varies) and simple reflections in $W_k$.  Consequently, for any core $(I,J)$-coset $p$, $\squish^J$ sends atomic reduced expressions for $p$ bijectively to ordinary reduced expressions for $\squish(p)$.
\end{thm}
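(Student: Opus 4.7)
The plan is to mirror the proof strategy of the type $A$ results (Theorems~\ref{thm.psi}, \ref{thm.atomicrex}, Propositions~\ref{monoidmorphism?}, \ref{prop:atomictosimple}), with two inputs specific to type $B$: a characterisation of the redundancy at the special reflection $s_0$, and a case analysis for atomic cosets whose parabolic $M$ contains $s_0$.

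First I would prove the preliminary lemma that $\mi{p}$ is a block permutation for any core $(I,J)$-coset $p$. This rests on a type $B$ analog of Proposition~\ref{redA}: for $j>0$, $s_j\in\rightred(p)$ iff $\mi{p}(j+1)=\mi{p}(j)+1$ (same argument as in type $A$, using minimality of $\mi{p}$ in its right coset). For $j=0$, since $s_0$ is the only simple reflection in its $W$-conjugacy class, $s_0\in\rightred(p)$ forces $\mi{p} s_0 \mi{p}^{-1}=s_0$, i.e.\ $\mi{p}$ commutes with $s_0$, equivalently $\mi{p}(1)\in\{\pm 1\}$. Since $p$ is core we have $\rightred(p)=J$, and iterating these conditions along each equivalence class of $\sim_J$ forces $\mi{p}$ to map each nonzero class order-preservingly onto a class of $\sim_I$. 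The class $C_0^{(J)}$ maps onto $C_0^{(I)}$ by the symmetry $\mi{p}(-i)=-\mi{p}(i)$ together with $\mi{p}(0)=0$.

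Given the block structure, the bijection $\squish^J$ follows as in Theorem~\ref{thm.psi}: injectivity because $\mi{p}$ is recovered from $\squish(\mi{p})$ and the block structure of $\sim_J$, and $p$ is recovered from $\mi{p}$ via $I=\mi{p} J \mi{p}^{-1}$; surjectivity because any $\sigma\in W_k$ lifts to a block permutation relative to $J$, which is the minimal element of its associated core coset. The composition statement $\squish(p)=\squish(p').\squish(p'')$ follows from Lemma~\ref{lem:mimimi} (giving $\mi{p}=\mi{p'}.\mi{p''}$) combined with the block-permutation homomorphism stated earlier in the section. Reducedness of the squashed product transfers from reducedness in $W_n$ via the length formula for signed permutations: when restricted to minimal double coset representatives (i.e.\ block permutations), class-level inversions correspond with positive multiplicities to inversions of the squashed permutation, so length additivity descends.

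The atomic-to-simple-reflection bijection is where type $B$ genuinely differs from type $A$. For an atomic coset $p\expr[I,M,J]$ with $M=Is=Jt$: if $s_0\notin M$, or if $s_0\in M$ but $s$ lies in a type-$A$ component of $M$, the analysis parallels Proposition~\ref{prop:atomictosimple} and the squashed permutation is the simple reflection $t_i\in W_k$ swapping the two merged classes, with $i\neq 0$. The new case is $s_0\in M$ with $s$ in the type-$B$ component of $M$: there the longest element of that component is central, so $t=w_M s w_M=s$, giving $I=J$; direct computation of $\mi{p}=w_I w_M$ shows that $\squish(\mi{p})$ is the simple reflection $s_0\in W_k$. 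A counting argument completes the bijection. The main obstacle is this central-longest-element case: one must check that the squashed image is exactly $s_0\in W_k$ and that the bookkeeping covers every atomic $(I,J)$-coset exactly once. Once the bijection of cosets with simple reflections is established, the final bijection between atomic reduced expressions for $p$ and ordinary reduced expressions for $\squish(p)$ follows exactly as in the proof of Theorem~\ref{thm.atomicrex}, by applying $\squish$ termwise and lifting via the surjectivity just established.
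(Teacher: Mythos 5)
Your proposal is correct and follows exactly the route the paper intends: the paper states this theorem without proof, remarking only that the arguments are "reasonably adaptable from the proofs in type $A$," and your adaptation supplies precisely the type-$B$-specific ingredients needed (the $s_0$ redundancy criterion via commutation with $s_0$, the behaviour of the class $C_0$, and the centrality of the longest element in the type $B$ component for the $\at_0$ atomic cosets). No substantive discrepancy with the paper's approach.
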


More explicitly, label $S \setminus J = \{s_{d_0}, \ldots, s_{d_{k-1}}\}$ in increasing order. For $s = s_{d_i} \in S\setminus J$ we write $\at^J_i$ for the atomic coset expressed by $[Js\setminus t,Js,J]$, where $t = w_{Js}sw_{Js}$. Then $\squish^J$ sends $\at^J_i$ to the simple reflection $s_i$ of $W_k$. Notice that \Cref{aa=a} still holds in this setting with the same proof.

We give now the sequence of double coset braid relations in $W_n$ which correspond to each braid relation in $W_k$.  The atomic braid relations $\at_i.\at_{i+1}.\at_i\expr \at_{i+1}.\at_i.\at_{i+1}$ for $i>0$ and $\at_i.\at_j\expr \at_j.\at_i$ for $|i-j|> 1$ are obtained as in type $A$ (cf. \Cref{m3a} and \Cref{m2a}). The remaining atomic braid relation can be obtained as follows.
\begin{equation}
\begin{alignedat}{2}
    \at_1.\at_0.\at_1.\at_0 &\quad\ \expr && [I+s_a(-s_b+s_c-s_d+s_e-s_f+s_g)-s_h] \\
    & \underset{\text{switchback}}{\expr} &&[I+s_a+s_c -s_f-s_h ]\\
    &\overset{\text{downdown}}{\underset{\text{upup}}{\expr}}&&
    [I+s_c+s_a-s_f-s_h]\\
    & \underset{\text{switchback}}{\expr} &&[I+s_c(-s_i+s_a -s_j+s_k -s_f+ s_l)-s_h]\expr \at_0.\at_1.\at_0.\at_1.
\end{alignedat}
\end{equation}

We conclude by giving a presentation of the atomic category $\AD(W_n,S)$ by generator and relations. Proofs are analogous to the type $A$ setting. We have chosen to identify parabolic subgroups $J$ by pinning down the size of its components $C_i$ with $i \ge 0$.

\begin{defn} Let $n \ge 0$. The \emph{labeled nilCoxeter algebroid of type $B$}, denoted $\nilcox_B(n)$, is the $\Z$-linear category presented as follows. The objects are those sequences $(n_0,n_1 \ldots, n_k)$ of positive integers such that $\sum n_c = n+1$. These can be identified with parabolic subsets $J\subset S$.
The generating morphisms are $\pa_i^J = \pa_i 1_J$, for $0\leq i\leq k-1$. If $i>0$, then $\pa_i^J$ is a morphisms from $J \leftrightarrow (n_0, n_1, \ldots, n_i, n_{i+1}, \ldots, n_k)$ to $(n_0, n_1, \ldots, n_{i+1}, n_i, \ldots, n_k)$ for $i>0$, while $\pa_0^J$ is a morphism from $J$ to itself. Note that $n_0$ is never changed by a morphism. The relations are
\begin{equation}    \pa_0\pa_1\pa_0\pa_1^J=\pa_1\pa_0\pa_1\pa_0^J
\end{equation}
\begin{equation}
\partial_i\partial_{i+1}\partial_i^J=\partial_{i+1}\partial_{i}\partial_{i+1}^J \quad \text{for $1\leq i\leq k-2$}
\end{equation}
    \begin{equation}
        \partial_i\partial_j^J = \partial_j\partial_i^J \quad \text{ if $|i-j|>1$}
    \end{equation}
    \begin{equation}
        \partial_i\partial_i^J = 0 \text{ for $0\leq i\leq k-1$}.
    \end{equation}
We have omitted superscripts when they are uniquely determined.
\end{defn}

\begin{prop}\label{prop:ADpresentationB}
We have isomorphisms of $\Z$-linear algebroids
    \begin{align*}
         \psi \co \nilcox_B(n) & \xrightarrow{\sim} \AD(W_n,S)\\
          \partial_i^J&\mapsto \partial_{\at_i^J}.
    \end{align*}
\end{prop}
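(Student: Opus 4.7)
The plan is to follow the strategy of Proposition~\ref{cor.AC0} in type $A$, adapting the verifications to the type $B$ setting.

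First I would check that $\psi$ is well-defined by verifying each defining relation of $\nilcox_B(n)$ holds in $\AD(W_n,S)$ after sending $\partial_i^J \mapsto \partial_{\at_i^J}$. The relations involving generators $\partial_i$ with $i \ge 1$ are local to a type $A$ configuration within $S$, so the arguments of Lemma~\ref{m3a} and Lemma~\ref{m2a} go through unchanged to establish $\at_i.\at_{i+1}.\at_i \expr \at_{i+1}.\at_i.\at_{i+1}$ for $1 \le i \le k-2$ and $\at_i.\at_j \expr \at_j.\at_i$ for $|i-j| > 1$. The new type $B$ braid relation $\at_0.\at_1.\at_0.\at_1 \expr \at_1.\at_0.\at_1.\at_0$ is precisely the sequence of double coset braid relations displayed immediately before the proposition statement. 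The nil relation $\partial_i \partial_i^J = 0$ follows from Lemma~\ref{aa=a}, whose proof is type-independent: it only invokes the $*$-quadratic relation together with Proposition~\ref{atomatom}, and in $\AD$ the non-reduced composition $\pa_{\at_i^I} \circ \pa_{\at_i^J}$ is zero by \eqref{eq:Demrelation}.

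Next I would establish surjectivity. By Corollary~\ref{cor:coresplitsintoatomic}, which holds for any Coxeter system, every core coset $p$ in $W_n$ has a reduced atomic expression, so $\pa_p$ lies in the image of $\psi$. Since $\AD(W_n,S)$ has $\Z$-basis $\{\pa_p\}_{p \text{ core}}$ by the theorem in \S\ref{subsec:atomicsubcats}, this shows $\psi$ is surjective.

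For injectivity, I would fix $J \subset S$ of size $m = n-k$ and analyze the left ideal $\nilcox_B(n) \cdot 1_J = \bigoplus_I \Hom_{\nilcox_B(n)}(J, I)$. The generators $\pa_0^J, \ldots, \pa_{k-1}^J$ together with the relations imposed by $\nilcox_B(n)$ restrict to exactly the defining presentation of the ordinary nilCoxeter algebra $\mathcal{N}_B(k)$ of type $B_k$. Hence there is a surjection $\mathcal{N}_B(k) \twoheadrightarrow \nilcox_B(n) \cdot 1_J$, giving an upper bound of $|W_k|$ on the rank. On the $\AD$ side, Theorem~\ref{thm:atomicrexB} implies that the set of core $(I,J)$-cosets (as $I$ varies) is in bijection with $W_k$, so $\bigoplus_I \Hom_{\AD}(J,I)$ has rank exactly $|W_k|$. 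A surjective map between free $\Z$-modules of equal finite rank is an isomorphism, proving injectivity.

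The main obstacle is confirming the type $B$ braid relation at the level of $\AD$: each of the intermediate $`\expr$' steps in the displayed chain must be checked to preserve reducedness, and one must keep careful track of which simple reflections appear in each intermediate subset. Once this bookkeeping is in place, the rest of the argument is a direct transplant of the type $A$ proof of Proposition~\ref{cor.AC0}.
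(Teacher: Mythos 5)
Your proposal is correct and follows essentially the same route the paper intends: the paper gives no separate proof for type $B$, stating only that the argument is analogous to Proposition~\ref{cor.AC0}, and your adaptation (well-definedness via the type $A$ lemmas plus the displayed $\at_1.\at_0.\at_1.\at_0 \expr \at_0.\at_1.\at_0.\at_1$ chain and Lemma~\ref{aa=a}, surjectivity via Corollary~\ref{cor:coresplitsintoatomic} and the basis theorem for $\AD$, injectivity via the rank comparison of $\nilcox_B(n)\cdot 1_J$ with the type $B_k$ nilCoxeter algebra and Theorem~\ref{thm:atomicrexB}) is exactly that adaptation.
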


\sloppy
\printbibliography
\end{document}